\def\rr{{\mathbb R}}
\def\rn{{\mathbb{R}^n}}
\def\urn{\mathbb{R}_+^{n+1}}
\def\zz{{\mathbb Z}}
\def\cc{{\mathbb C}}
\def\aa{{\mathbb A}}
\def\nn{{\mathbb N}}
\def\ca{{\mathcal A}}
\def\cd{{\mathcal D}}
\def\cg{{\mathcal G}}
\def\cm{{\mathcal M}}
\def\cp{{\mathcal P}}
\def\cq{{\mathcal Q}}
\def\cs{{\mathcal S}}
\def\fz{\infty }
\def\lz{\lambda}
\def\vz{\varphi}
\def\vez{\varepsilon}
\def\lf{\left}
\def\r{\right}
\def\la{\langle}
\def\ra{\rangle}
\def\hs{\hspace{0.25cm}}
\def\ls{\lesssim}
\def\gs{\gtrsim}
\def\ov{\overline}
\def\noz{\nonumber}
\def\wz{\widetilde}
\def\wh{\widehat}
\def\gfz{\genfrac{}{}{0pt}{}}
\def\loc{{\mathop\mathrm{\,loc\,}}}
\def\supp{\mathop\mathrm{\,supp\,}}
\def\vlp{{L^{p(\cdot)}(\rn)}}
\def\btlve{F_{p(\cdot),q(\cdot)}^{s(\cdot),\phi}(\rn)}
\def\beve{b_{p(\cdot),q(\cdot)}^{s(\cdot),\phi}(\rn)}
\def\bbeve{B_{p(\cdot),q(\cdot)}^{s(\cdot),\phi}(\rn)}
\newtheorem{theorem}{Theorem}[section]
\newtheorem{lemma}[theorem]{Lemma}
\newtheorem{corollary}[theorem]{Corollary}
\newtheorem{proposition}[theorem]{Proposition}
\theoremstyle{definition}
\newtheorem{remark}[theorem]{Remark}
\newtheorem{definition}[theorem]{Definition}
\renewcommand{\appendix}{\par
   \setcounter{section}{0}%
   \setcounter{subsection}{0}%
   \setcounter{subsubsection}{0}%
   \gdef\thesection{\@Alph\c@section}%
   \gdef\thesubsection{\@Alph\c@section.\@arabic\c@subsection}%
   \gdef\theHsection{\@Alph\c@section.}%
   \gdef\theHsubsection{\@Alph\c@section.\@arabic\c@subsection}%
   \csname appendixmore\endcsname
 }
\numberwithin{equation}{section}
\begin{document}

\arraycolsep=1pt

\title{\bf\Large Besov-Type Spaces with Variable Smoothness and Integrability
\footnotetext{\hspace{-0.35cm} 2010 {\it
Mathematics Subject Classification}. Primary 46E35;
Secondary 42B25, 42B35.
\endgraf {\it Key words and phrases.} Besov space, variable exponent,
Peetre maximal function, embedding, atom, trace.
\endgraf This project is supported by the National
Natural Science Foundation of China
(Grant Nos.~11171027, 11361020 \& 11471042),
the Specialized Research Fund for the Doctoral Program of Higher Education
of China (Grant No. 20120003110003) and the Fundamental Research
Funds for Central Universities of China (Grant No.~2012LYB26, 2013YB60 and 2014KJJCA10). }}
\author{Dachun Yang, Ciqiang Zhuo\footnote{Corresponding author}\ \ and Wen Yuan}
\date{}
\maketitle

\vspace{-0.8cm}

\begin{center}
\begin{minipage}{13cm}
{\small {\bf Abstract}\quad
In this article, the authors introduce
Besov-type spaces with variable smoothness
and integrability. The authors then establish their
characterizations, respectively, in terms of $\varphi$-transforms
in the sense of Frazier and Jawerth, smooth atoms or Peetre maximal functions,
as well as a Sobolev-type embedding.  As an application of their atomic
characterization, the authors obtain a trace theorem of these variable Besov-type spaces.
}
\end{minipage}
\end{center}

\vspace{-0.1cm}

\section{Introduction\label{s1}}
\hskip\parindent
Spaces of variable integrability, also known as variable exponent Lebesgue spaces
$L^{p(\cdot)}(\rn)$, can be traced back to Orlicz \cite{or31,or32},
and studied by Musielak \cite{ms83} and Nakano \cite{nak50,nak51}, but the modern
development started with the articles \cite{kr91} of Kov\'a\v{c}ik
and R\'akosn\'{\i}k as well as \cite{cruz03} of Cruz-Uribe and \cite{din04} of Diening.
The variable Lebesgue spaces have already widely used in
the study of harmonic analysis; see, for example,
\cite{cfbook,cdh11,cfn03,dhhm09,dhr11,ns12,ins14}. Apart from theoretical considerations,
such function spaces have interesting applications in fluid dynamics
 \cite{am02,rm00}, image processing \cite{clr06},
partial differential equations and variational calculus \cite{am05,fan07,hhl08,ohno09,su09}.

In recent years, function spaces with variable exponents attract many attentions,
especially based on classical Besov and Triebel-Lizorkin spaces (see Triebel's monographes
\cite{t83,t92,t06} for the history of these two spaces). When Leopold
\cite{leop891,leop892,leop91,leop99} and Leopold and Schrohe \cite{leops96}
studied pseudo-differential operators, they introduced
related Besov spaces with variable smoothness, $B_{p,p}^{s(\cdot)}(\rn)$, which
were further generalized to the case that $q\neq p$, including
$B_{p,q}^{s(\cdot)}(\rn)$ and $F_{p,q}^{s(\cdot)}(\rn)$,
by Besov \cite{besov99,besov03,besov05}.
Along a different line of study, Xu \cite{Xu081,Xu082}
studied Besov spaces $B_{p(\cdot),q}^{s}(\rn)$ and Triebel-Lizorkin spaces
$F_{p(\cdot),q}^{s}(\rn)$ with variable exponent $p(\cdot)$
but fixed $q$ and $s$.
As was well known from the trace theorem (see, for example, \cite[Theorem 11.1]{fj90})
and Sobolev-type embeddings (see, for example, \cite[Theorem 2.7.1]{t83})
of classical function spaces, the smoothness and the integrability
often interact each other. However, the unification of both trace theorems
and Sobolev-type embeddings does not hold true on function spaces
with only one variable index; for example, the trace space of Sobolev space
$W^{k,p(\cdot)}$ is no longer a space of the same type (see \cite{dhr11}).
Thus, function spaces with full ranges of variable smoothness and
variable integrability are needed.

The concept of function spaces with variable smoothness and
variable integrability was firstly mixed up by Diening, H\"ast\"o and Roudenko in
\cite{dhr09}, they introduced Triebel-Lizorkin spaces with variable exponents
$F_{p(\cdot),q(\cdot)}^{s(\cdot)}(\rn)$ and proved a
trace theorem as follows:
$$\mathop\mathrm{Tr} F_{p(\cdot),q(\cdot)}^{s(\cdot)}(\rn)
=F_{p(\cdot,0), p(\cdot,0)}^{s(\cdot,0)-1/{p(\cdot,0)}}(\rr^{n-1}),$$
(see \cite[Theorem 3.13]{dhr09}), which shows that these spaces behaved nicely with
respect to the trace operator.
Subsequently, Vyb\'iral \cite{vj09} established Sobolev-Jawerth embeddings
of these spaces. On the other hand, Almeida and H\"ast\"o \cite{ah10} introduced
the Besov space with variable smoothness and integrability
$B_{p(\cdot),q(\cdot)}^{s(\cdot)}(\rn)$, which makes a further step
in completing the unification process of function
spaces with variable smoothness and integrability.
Later, Drihem \cite{dd12} established the atomic characterization of
$B_{p(\cdot),q(\cdot)}^{s(\cdot)}(\rn)$ and Noi et al. \cite{noi14, noi12,nois12}
also studied the space $B_{p(\cdot),q(\cdot)}^{s(\cdot)}(\rn)$ and
$F_{p(\cdot),q(\cdot)}^{s(\cdot)}(\rn)$ including the boundedness of trace and
extension operators, duality and complex interpolation. Here we point out that vector-valued
convolution inequalities developed in \cite[Lemma 4.7]{ah10}
and \cite[Theorem 3.2]{dhr09} supply well remedy for the absence of the
Fefferman-Stein vector-valued inequality for the mixed Lebesgue sequence spaces
$\ell^{q(\cdot)}(L^{p(\cdot)}(\rn))$ and $L^{p(\cdot)}(\ell^{q(\cdot)}(\rn))$,
respectively, in studying Besov spaces and Triebel-Lizorkin spaces
with variable smoothness and integrability.

More generally, 2-microlocal Besov and Triebel-Lizorkin spaces with variable,
$B_{p(\cdot),q(\cdot)}^{\textbf{\emph{w}}(\cdot)}(\rn)$ and
$F_{p(\cdot),q(\cdot)}^{\textbf{\emph{w}}(\cdot)}(\rn)$, were
introduced by Kempka \cite{kempka09,kempka10} and provided a unified approach that
cover the classical Besov and Triebel-Lizorkin spaces as well as versions of variable
smoothness and integrability. Afterwards, Kempka and Vyb\'iral \cite{kv12}
characterized these spaces by local means and ball means of differences.
The trace spaces of 2-microlocal type spaces were studied very recently
by Moura et al. \cite{mns13} and Gon\c{c}alves et al. \cite{gmn14}.

On the other hand, Besov-type spaces $B_{p,q}^{s,\tau}(\rn)$
and Triebel-Lizorkin spaces $F_{p,q}^{s,\tau}(\rn)$ and their homogeneous counterparts
for all admissible parameters were introduced in \cite{yyjfa,yymz10,ysiy}
in order to clarify the relations among Besov spaces,
Triebel-Lizorkin spaces and
$Q$ space (see \cite{dx04,ejpx}).
Various properties and equivalent characterizations of
Besov-type and Triebel-Lizorkin-type spaces,
including smoothness atomic, molecular or wavelet decompositions, characterizations,
respectively,
via differences, oscillations, Peetre maximal functions, Lusin area functions or
$g_\lz^\ast$ functions, have already been established in
\cite{d13,lsuyy,yyna,yyzaa,yy4,yyzjfsa,yyz12,yhsy}.
Moreover, these function spaces, including some of their special cases
 related to $Q$ spaces,
have been used to study the existence and the regularity of
solutions of some partial differential equations such as (fractional)
Navier-Stokes equations; see, for example, \cite{lxy,ly13,lzjmaa,lzjfa,t13,ysy13,t15}.
Based on $F_{p,q}^{s,\tau}(\rn)$, we introduced the Triebel-Lizorkin-type
space with variable exponent $\btlve$ in \cite{yyz14} with a measurable function
$\phi$ on $\urn$ and obtained a related trace theorem (\cite[Theorem 4.1]{yyz14}).

In this article, based on Besov-type spaces $B_{p,q}^{s,\tau}(\rn)$ and
variable Besov spaces $B_{p(\cdot),q(\cdot)}^{s(\cdot)}(\rn)$,
we are aimed to introduce another more generalized
scale of function spaces with variable smoothness $s(\cdot)$,
variable integrability $p(\cdot)$ and $q(\cdot)$, and a measurable function $\phi$
on $\urn$, denoted
by $\bbeve$, which covers both Besov spaces with variable
smoothness and integrability and Besov-type spaces.
We then establish their $\vz$-transform characterization in the sense
of Frazier and Jawerth. We also characterize these spaces by
smooth atoms or Peetre maximal functions in this article and
give some basic properties and Sobolev-type embeddings. As applications,
we prove a trace theorem of $\bbeve$ and obtain several equivalent norms of these spaces.

This article is organized as follows.

In Section \ref{s2}, we first give some
conventions and notation such as semimodular spaces, variable and
mixed Lebesgue-sequence spaces, and also introduce variable Besov-type spaces $\bbeve$.
We point out that the function spaces studied in this article fit into the
framework of so-called semimodular spaces. At the end of this section,
we point out that, in general, the scale of Besov-type spaces with variable
smoothness and integrability and the scale of Musielak-Orlicz Besov-type spaces
in \cite{yyzrmc} do not cover each other (see Remark \ref{r-com} below).

Section \ref{s-trans} is devoted to the $\vz$-transform characterization of
$\bbeve$ in the sense of Frazier and Jawerth \cite{fj90},
which is then applied to show that $\bbeve$ is well defined.
This is different from \cite[Theorem 5.5]{ah10}, in which the space
$B_{p(\cdot),q(\cdot)}^{s(\cdot)}(\rn)$ was proved to be well defined via
the Calder\'on reproducing formula. We point out that the method used in this article
is originally from Frazier and Jawerth \cite{fj90},
which is smartly modified in this article, via a subtle decomposition
of dyadic cubes, so that it is suitable to the present setting
(see Theorem \ref{t-transform} and Corollary \ref{c-indepen} below).
Observe that the \emph{r-trick lemma} from \cite[Lemma A.6]{dhr09}
(see also Lemma \ref{l-r-trick} below) plays a key role in establishing
a convolutional estimate so that we can use the convolutional inequality
from \cite[Lemma 4.7]{ah10} (see also Lemma \ref{l-conv-ineq} below)
to obtain the desired conclusion.

In Section \ref{s4}, by making full use of
the \emph{r-trick lemma} from \cite[Lemma A.6]{dhr09} again, we mainly give out the
Sobolev-type embedding property of $\bbeve$ (see Proposition \ref{p-se-embed}
and Theorem \ref{t-sobolev} below).
Some other basic embeddings and
properties of the spaces $\bbeve$ are also presented.

In Section \ref{s-equi}, we first characterize the space $\bbeve$ via Peetre maximal
functions (see Theorem \ref{t-equivalent} below). A key step to obtain this is to establish
a technical lemma (see Lemma \ref{l-equi} below),
which indicates that the Peetre maximal function can be controlled,
via semimodulars,  by the approximation to the identity in a suitable way.
Applying Theorem \ref{t-equivalent}, we further obtain two equivalent
characterizations of  $\bbeve$ (see Theorem \ref{t-equivalent-x} below).
Finally, in this section, by applying a Hardy-type inequality from \cite[Lemma 3.11]{d13} (see also
Lemma \ref{l-hardy} below) and the Sobolev-type embedding theorem obtained
in Section \ref{s4}, together with some ideas from the proof of Lemma \ref{l-equi},
we establish the smooth atomic characterization of $\bbeve$ (see Theorem \ref{t-atomd} below).

In the last section, Section \ref{s6}, as an application of the smoothness atomic
characterization obtained in Theorem \ref{t-atomd},
we prove a trace theorem for $\bbeve$ (see Theorem \ref{t-trace} below), which partly extends
the corresponding one obtained in \cite[Theorem 3.4]{mns13} and also
\cite[Theorem 5.1(1)]{noi14}.
The key point for this is to prove that
the trace space of $\bbeve$ is independent of the $n$-th coordinate of
variable exponents $p(\cdot)$, $q(\cdot)$ and $s(\cdot)$ (see Corollary \ref{c-trace}
and Lemma \ref{l-trace4} below).

\section{Preliminary\label{s2}}

\hskip\parindent
Throughout the article, we denote by
$C$ a \emph{positive constant} which is independent of the main
parameters, but may vary from line to line. The \emph{symbols}
$A\ls B$ means $A\le CB$. If $A\ls B$ and $B\ls A$, then we write $A\sim B$.
For all $a,\,b\in\rr$, let $a\vee b:=\max\{a,\,b\}$.
For all $k:=(k_1,\dots,k_n)\in\zz^n$, let $|k|:=|k_1|+\cdots+|k_n|$.
Let $\zz_+:=\{0,1,\dots\}$, $\nn:=\{1,2,\dots\}$ and $\mathbb{K}:=\rr$ or $\cc$.
Let $\urn:=\rn\times[0,\fz)$.
If $E$ is a subset of $\rn$, we denote by $\chi_E$ its
\emph{characteristic function} and $\wz \chi_E:=|E|^{-1/2}\chi_E$.
For all $x\in\rn$ and $r\in(0,\fz)$,
denote by $Q(x,r)$ the cube centered at $x$ with side length $r$,
whose sides parallel axes of coordinate.
For all cube $Q\subset \rn$, we denote its \emph{center} by $c_Q$ and
its \emph{side length} by $\ell(Q)$ and, for $a\in(0,\fz)$, we denote by
$aQ$ the \emph{cube} concentric with $Q$ having the side length with $a\ell(Q)$.

\subsection{Modular spaces}
\hskip\parindent
In this subsection, we recall some conventions and notions about
(semi)modular spaces, and state some basic results.
For an exposition of these concepts, we refer to the monograph
\cite[Chapters 1-3]{dhr11}.
 The function spaces studied in this article
fit into the framework of so-called semimodular spaces.
In what follows, let $X$ be a vector space over $\mathbb{K}$.

\begin{definition}
A function
$\varrho:\ X\to[0,\fz]$ is called a \emph{semimodular} on $X$ if
it satisfies:

(i) $\varrho(0)=0$ and, for all $f\in X$ and $\lz\in \mathbb K$ with $|\lz|=1$,
$\varrho(\lz f)=\rho(f)$;

(ii) If $\varrho(\lz f)=0$ for all $\lz\in(0,\fz)$, then $f=0$;

(iii) $\rho$ is \emph{quasiconvex}, namely, there exists $A\in[1,\fz)$ such that,
for all $f,\ g\in X$,
$$\varrho(\theta f+(1-\theta)g)\le A\lf[\theta\varrho(f)+(1-\theta)\varrho(g)\r];$$

(iv) $\lz\mapsto\varrho(\lz f)$ is left continuous on $[0,\fz)$ for every $f\in X$,
namely, $\lim_{\lz<1,\lz\to1}\varrho(\lz f)=\varrho(f)$.

A semimodular $\varrho$ is called a \emph{modular} if it satisfies that
$\varrho(f)=0$ implies $f=0$, and is called \emph{continuous} if, for every
$f\in X$, the mapping $\lz\mapsto\varrho(\lz f)$ is continuous on $[0,\fz)$, namely,
$\lim_{\lz\to1}\varrho(\lz f)=\varrho(f)$.
\end{definition}

\begin{definition}
Let $\varrho$ be a (semi)modular on $X$. Then
$$X_{\varrho}:=\{f\in X:\ \exists\ \lz\in(0,\fz)\
{\rm such\ that}\ \varrho(\lz f)<\fz\}$$
is called a \emph{(semi)modular space} with the norm
$$\|f\|_\varrho:=\inf\lf\{\lz\in(0,\fz):\ \varrho(f/\lz)\le1\r\}.$$
\end{definition}

The following Lemma \ref{l-modular} is just \cite[Lemma 2.1.14]{dhr11}.

\begin{lemma}\label{l-modular}
Let $\varrho$ be a semimodular on $X$.
Then $\|f\|_{\varrho}\le1$ if and only if $\varrho(f)\le1$;
moreover, if $\varrho$ is continuous, then $\|f\|_{\varrho}<1$ if and only if
$\varrho(f)<1$, as well as $\|f\|_{\varrho}=1$ if and only if $\varrho(f)=1$.
\end{lemma}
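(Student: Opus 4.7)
I would reduce the entire lemma to a one-variable analysis of the scalar function $\psi(\mu):=\varrho(\mu f)$ on $[0,\infty)$. The two structural features of $\psi$ to isolate are that $\psi$ is non-decreasing and left continuous at every point. Monotonicity will follow from writing $\mu_1 f=(\mu_1/\mu_2)(\mu_2 f)+(1-\mu_1/\mu_2)\cdot 0$ for $0<\mu_1\le\mu_2$ and applying the quasiconvexity axiom (iii) together with $\varrho(0)=0$ from axiom (i); left continuity is axiom (iv) itself. Under the change of variable $\lambda=1/\mu$, these properties transfer to $\lambda\mapsto\varrho(f/\lambda)$, so the defining set $S:=\{\lambda>0:\,\varrho(f/\lambda)\le 1\}$ becomes upward-closed in $(0,\infty)$ with $\inf S=\|f\|_{\varrho}$.

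For the first equivalence, the reverse direction is immediate: $\varrho(f)\le 1$ places $\lambda=1$ in $S$. For the forward direction, the assumption $\|f\|_{\varrho}\le 1$ together with upward-closedness of $S$ gives $\varrho(f/(1+1/n))\le 1$ for every $n\in\nn$; letting $n\to\infty$ and applying left continuity of $\psi$ at $\mu=1$ along the sequence $\mu_n:=n/(n+1)\nearrow 1$ delivers $\varrho(f)\le 1$.

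For the \emph{moreover} part, continuity of $\varrho$ upgrades $\psi$ to being continuous on all of $[0,\infty)$. The implication $\varrho(f)<1\Rightarrow\|f\|_{\varrho}<1$ will then be direct: continuity at $\mu=1$ produces some $\mu>1$ with $\psi(\mu)<1$, whence $1/\mu\in S$ and $\|f\|_{\varrho}\le 1/\mu<1$. For the converse, I would pick $\lambda\in(\|f\|_{\varrho},1)$ so that $\varrho(f/\lambda)\le 1$, and apply quasiconvexity to the decomposition $f=\lambda(f/\lambda)+(1-\lambda)\cdot 0$ to obtain the quantitative bound $\varrho(f)\le A\lambda\,\varrho(f/\lambda)\le A\lambda$; with $\lambda$ chosen close enough to $\|f\|_{\varrho}$ (and, if necessary, iterating the scaling a finite number of times to push below $1/A$), this yields $\varrho(f)<1$. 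Finally, the equality case $\|f\|_{\varrho}=1\iff\varrho(f)=1$ drops out by trichotomy, combining the contrapositive of the first equivalence ($\|f\|_{\varrho}>1\iff\varrho(f)>1$) with the strict version just established.

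The chief obstacle I foresee is the converse direction of the strict continuous-case equivalence: the quasiconvexity constant $A\ge 1$ can interfere with the naive monotonicity argument, so a delicate choice of the auxiliary scaling parameter $\lambda$ — exploiting continuity of $\psi$ at $\mu=1$ and the strict scaling $\lambda<1$ — is required to guarantee the \emph{strict} drop of $\varrho(f)$ below $1$, rather than the merely non-strict estimate that monotonicity alone provides.
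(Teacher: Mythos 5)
The paper offers no proof of this lemma to compare against: it is quoted verbatim from \cite{dhr11} (Lemma 2.1.14 there), whose proof is exactly your one-variable reduction — monotonicity plus left continuity of $\mu\mapsto\varrho(\mu f)$ — carried out for \emph{convex} semimodulars, i.e.\ with $A=1$. Under that reading your outline is correct and essentially identical to the source: the trivial direction places $\lambda=1$ in $S$, left continuity along $\mu_n\nearrow1$ gives the forward direction, and for the strict statements convexity yields $\varrho(f)\le\lambda\varrho(f/\lambda)\le\lambda<1$ with no need for any iteration.

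The genuine gap is your very first step, and it is not cosmetic. From quasiconvexity with constant $A$ and $\varrho(0)=0$ you obtain, for $0<\mu_1\le\mu_2$, only $\varrho(\mu_1f)\le A\tfrac{\mu_1}{\mu_2}\varrho(\mu_2f)$, which is monotonicity of $\psi$ only when $A=1$. For $A>1$ the function $\psi$ need not be non-decreasing, $S$ need not be upward-closed, and the implication $\|f\|_\varrho\le1\Rightarrow\varrho(f)\le1$ actually fails: on $X=\rr$ set $\varrho(t):=\psi(|t|)$ with $\psi(s)=\tfrac32s$ on $[0,1]$, $\psi(s)=2-\tfrac s2$ on $[1,2]$ and $\psi(s)=s-1$ on $[2,\fz)$. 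Since $\tfrac s2\le\psi(s)\le\tfrac32s$, this is a continuous semimodular satisfying (iii) with $A=3$, yet $\varrho(1)=\tfrac32>1$ while $\psi(2)=1$ gives $\varrho(1/\lambda)\le1$ at $\lambda=\tfrac12$, so $\|1\|_\varrho=\tfrac12\le1$. The same example defeats your proposed repair of the strict converse: knowing $\varrho(f/\lambda)\le1$ for some $\lambda\in(\|f\|_\varrho,1)$ only gives $\varrho(f)\le A\lambda$, and when $\|f\|_\varrho\in[1/A,1)$ no admissible $\lambda$ makes $A\lambda<1$; ``iterating the scaling'' does not help because you have no bound on $\varrho(f/\lambda^k)$ for the intermediate scalings. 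In short, the lemma is a statement about convex semimodulars; you correctly sensed that $A\ge1$ is the obstacle but localized it to the wrong step — it already destroys the monotonicity on which both halves of your argument rest — and the proposed workaround cannot close the gap.
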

\begin{remark}
When dealing with some complicated quasi-norms defined via variable exponents,
we are often converted to dealing with the corresponding semimodulars
by Lemma \ref{l-modular}; see Remarks \ref{r-modular} and \ref{re-mixed}(i) below.
\end{remark}

\subsection{Spaces of variable integrability}

\hskip\parindent
Here, we recall some definitions and notation for the space with
variable integrability.
For a measurable function
$p(\cdot):\ \rn\to(0,\fz]$, let
\begin{equation*}
p_-:=\mathop{\rm ess\,inf}\limits_{x\in \rn}p(x)\quad {\rm and}\quad
p_+:=\mathop{\rm ess\,sup}\limits_{x\in \rn}p(x).
\end{equation*}
The set of \emph{variable exponents} in this article, denoted by $\cp(\rn)$, is
the \emph{set of all measurable functions
$p(\cdot):\ \rn\to(0,\fz]$ satisfying $p_-\in(0,\fz]$}.
For $p(\cdot)\in\cp(\rn)$ and $x\in\rn$, define the function
$\rho_{p(x)}$ by setting,
for all $t\in[0,\fz)$,
\begin{eqnarray*}
\rho_{p(x)}(t):=\lf\{
\begin{array}{cl}
t^{p(x)},\quad&{\rm if}\ p(x)\in(0,\fz),\\
0,\quad&{\rm if}\ p(x)=\fz\ {\rm and}\ t\in[0,1],\\
\fz,\quad&{\rm if}\ p(x)=\fz\ {\rm and}\ t\in(1,\fz).
\end{array}\r.
\end{eqnarray*}
The \emph{variable exponent modular} of a measurable function $f$
on $\rn$ is defined by
\begin{equation*}
\varrho_{p(\cdot)}(f):=\int_\rn\rho_{p(x)}(|f(x)|)\,dx.
\end{equation*}

\begin{remark}\label{r-modular}
Let $p\in\cp(\rn)$ satisfy $p_-\in[1,\fz]$.
Then $\varrho_{p(\cdot)}$ is a semimodular (see \cite[Definition 3.2.1]{dhr11}),
which, together with Lemma \ref{l-modular}, implies that
$\|f\|_{\vlp}\le1$ if and only if $\varrho_{p(\cdot)}(f)\le1$.
Moreover, for all $p\in\cp(\rn)$, $\|f\|_{\vlp}\le1$ if and only if $\varrho_{p(\cdot)}(f)\le1$.
\end{remark}

\begin{definition}\label{def1}
Let $p(\cdot)\in\cp(\rn)$ and $E$ be a measurable subset of $\rn$. Then the
\emph{variable exponent Lebesgue space} $L^{p(\cdot)}(E)$ is defined to be the
set of all measurable functions $f$ such that
\begin{equation*}
\|f\|_{L^{p(\cdot)}(E)}
:=\inf\lf\{\lz\in(0,\fz):\ \varrho_{p(\cdot)}\lf(f\chi_E/\lz\r)\le1\r\}<\fz.
\end{equation*}
\end{definition}

\begin{remark}\label{re-vlp}
Let $p\in\cp(\rn)$.

(i) If $p_-\in[1,\fz]$, then $\vlp$ is a Banach space
(see \cite[Theorem 3.2.7]{dhr11}). In particular,
for all $\lz\in\cc$,
$\|\lz f\|_{\vlp}=|\lz|\|f\|_{\vlp}$
 and, for all $f,\ g\in\vlp$,
$$\|f+g\|_{\vlp}\le \|f\|_{\vlp}+\|g\|_{\vlp}.$$

(ii) (\textbf{The H\"older inequality}) Assume that
$1<p_-\le p_+<\fz$.
 It was proved in \cite[Lemma 3.2.20]{dhr11} that,
if $f\in \vlp$ and $g\in L^{p^\ast(\cdot)}(\rn)$, then $fg\in L^1(\rn)$
and
$$\int_\rn|f(x)g(x)|\,dx\le C\|f\|_{\vlp}\|g\|_{L^{p^\ast(\cdot)}(\rn)},$$
where $p^\ast(x):=\frac{p(x)}{p(x)-1}$ for all $x\in\rn$, $C$ is a positive constant
depending on $p_-$ or $p_+$, but independent of $f$ and $g$.

(iii) If $p_+\in(0,1]$, then it is easy to see that,
for all nonnegative functions $f,\ g\in\vlp$,
the following reverse Minkowsiki inequality holds true:
$$\|f\|_{\vlp}+\|g\|_{\vlp}\le\|f+g\|_{\vlp}.$$
\end{remark}

\begin{definition}\label{def2}
Let $p,\ q\in\cp(\rn)$ and $E$ be a measurable subset of $\rn$. Then the
\emph{mixed Lebesgue-sequence space} $\ell^{q(\cdot)}(L^{p(\cdot)}(E))$
is defined to be the set of all sequences $\{f_v\}_{v\in\nn}$ of functions
in $L^{p(\cdot)}(E)$ such that
\begin{eqnarray*}
\lf\|\{f_v\}_{v\in\nn}\r\|_{\ell^{q(\cdot)}(L^{p(\cdot)}(E))}
:=\inf\lf\{\lz\in(0,\fz):\ \varrho_{\ell^{q(\cdot)}(L^{p(\cdot)})}
\lf(\{f_v\chi_E/\lz\}_{v\in\nn}\r)\le1\r\}<\fz,
\end{eqnarray*}
where, for all sequences $\{g_v\}_{v\in\nn}$ of measurable functions,
\begin{equation}\label{mixed-x}
\varrho_{\ell^{q(\cdot)}(L^{p(\cdot)})}(\{g_v\}_{v\in\nn})
:=\sum_{v\in\nn}\inf\lf\{\mu_v\in(0,\fz):\ \varrho_{p(\cdot)}
\lf(g_v/\mu_v^{1/q(\cdot)}\r)\le1\r\}
\end{equation}
with the convention $\lz^{1/\fz}=1$ for all $\lz\in(0,\fz)$.
\end{definition}

\begin{remark}\label{re-mixed}
Let $p,\ q\in\cp(\rn)$.

(i) The mixed Lebesgue-sequence space $\ell^{q(\cdot)}(L^{p(\cdot)}(\rn))$
was introduced by Almeida and H\"ast\"o \cite{ah10}. Moreover,
$\varrho_{\ell^{q(\cdot)}(L^{p(\cdot)})}$ is a semimodular (see
\cite[Proposition 3.5]{ah10}), which, together with Lemma \ref{l-modular}, implies that
$\|f\|_{\ell^{q(\cdot)}(L^{p(\cdot)}(\rn))}\le1$ if and only if
$\varrho_{\ell^{q(\cdot)}(L^{p(\cdot)})}(f)\le1$.

(ii) If $q_+\in(0,\fz)$, then, for all measurable functions $g$ on $\rn$,
it holds true that
$$\inf\lf\{\lz\in(0,\fz):\ \varrho_{p(\cdot)}
\lf(\frac g{\lz^{1/q(\cdot)}}\r)\le1\r\}
=\lf\||g|^{q(\cdot)}\r\|_{L^{\frac{p(\cdot)}{q(\cdot)}}(\rn)}.$$

(iii) Let $\{g_v\}_{v\in\nn}$ be a sequence of functions in $\vlp$.
If, for all $v\in\{2,3,\dots\}$, $g_v\equiv0$, then
$$\lf\|\{g_v\}_{v\in\nn}\r\|_{\ell^{q(\cdot)}(L^{p(\cdot)}(\rn))}=\|g_1\|_{\vlp}$$
(see \cite[Example 3.4]{ah10}).

(iv) If $p,\ q\in\cp(\rn)$, then $\|\cdot\|_{\ell^{q(\cdot)}(L^{p(\cdot)}(\rn))}$
is a quasi-norm on $\ell^{q(\cdot)}(L^{p(\cdot)}(\rn))$
(see \cite[Theorem 3.8]{ah10}); if either $\frac 1{p(x)}+\frac1{q(x)}\le1$ or
$q$ is a constant, then $\|\cdot\|_{\ell^{q(\cdot)}(L^{p(\cdot)}(\rn))}$ is a norm
(see \cite[Theorem 3.6]{ah10}); if either $p(x)\ge1$ and $q\in[1,\fz)$ is a constant
almost everywhere or $1\le q(x)\le p(x)\le\fz$ for almost every $x\in\rn$, then
$\|\cdot\|_{\ell^{q(\cdot)}(L^{p(\cdot)}(\rn))}$ is also a norm
(see \cite[Theorem 1]{kv13}).

(v) By \cite[Proposition 3.3]{ah10}, we know that, if $q\in(0,\fz]$ is constant,
 then
$$\lf\|\{g_v\}_{v\in\nn}\r\|_{\ell^q(L^{p(\cdot)}(\rn))}
=\lf\|\lf\{\|g_v\|_{L^{p(\cdot)}(\rn)}\r\}_{v\in\nn}\r\|_{\ell^q}.$$
\end{remark}

A measurable function $g\in\cp(\rn)$ is said to satisfy the
\emph{locally {\rm log}-H\"older continuous condition},
denoted by $g\in  C_{\rm loc}^{\log}(\rn)$,
if there exists a positive constant $C_{\log}(g)$ such that, for all $x,\ y\in\rn$,
\begin{equation}\label{ve1}
|g(x)-g(y)|\le \frac{C_{\log}(g)}{\log(e+1/|x-y|)},
\end{equation}
and $g$ is said to satisfy the
\emph{globally {\rm log}-H\"older continuous condition},
denoted by $g\in  C^{\log}(\rn)$,
if $g\in  C_{\rm loc}^{\log}(\rn)$ and there exist positive constants
$C_\fz$ and $g_\fz$
such that, for all $x\in\rn$,
\begin{equation}\label{ve2}
|g(x)-g_\fz|\le \frac{C_\fz}{\log(e+|x|)}.
\end{equation}
\begin{remark}\label{re-conv}
(i) Let $p\in  C^{\log}(\rn)$. Then, it was proved in
\cite[Lemma 4.6.3]{dhr11} that, for every $f\in\vlp$ and every nonnegative,
radially decreasing function $g\in L^1(\rn)$,
$$\|f\ast g\|_{\vlp}\le C\|f\|_{\vlp}\|g\|_{L^1(\rn)},$$
where $C$ is a positive constant independent of $f$ and $g$.

(ii) Let $p\in  \cp(\rn)$. If $p_+\in(0,\fz)$, then $p\in C^{\log}(\rn)$
if and only if $1/p\in C^{\log}(\rn)$. If $p$ satisfies \eqref{ve2}, then
$p_\fz=\lim_{|x|\to\fz}p(x)$.

(iii) If $q\in C_{\rm loc}^{\log}(\rn)$ and
$q_+=\fz$, then, by \eqref{ve1}, it is easy to see that
$q(x)=\fz$ for all $x\in\rn$. From this and Remark \ref{re-mixed}(v),
we deduce that, in the case that $q_+=\fz$, the mixed norm
$\|\cdot\|_{\ell^{q(\cdot)}(\vlp)}$ becomes the norm
$\|\cdot\|_{\ell^{\fz}(\vlp)}$.
\end{remark}

\subsection{The Besov-type space $\bbeve$}
\hskip\parindent
Let $\mathcal{G}(\urn)$ be the \emph{set} of all measurable functions
 $\phi:\ \urn\to(0,\fz)$ having the following properties: there exist
 positive constants $c_1$ and $c_2$ such that,
for all $x\in\rn$ and $r\in(0,\fz)$,
\begin{equation}\label{phi-1}
c_1^{-1}\phi(x,2r)\le\phi(x,r)\le c_1\phi(x,2r)
\end{equation}
and, for all $x,\,y\in\rn$ and $r\in(0,\fz)$ with $|x-y|\le r$,
\begin{equation}\label{phi-2}
c_2^{-1}\phi(y,r)\le\phi(x,r)\le c_2\phi(y,r).
\end{equation}
\begin{remark}
(i) We point out that \eqref{phi-1} and \eqref{phi-2} are
 called the \emph{doubling condition} and the
\emph{compatibility condition}, respectively, which have been used by Nakai
\cite{nakai93,nakai06} and Nakai and Sawano \cite{ns12} when they studied generalized
Campanato spaces.

(ii) There are several examples of $\phi$ that satisfy \eqref{phi-1} and
\eqref{phi-2}; see \cite[Remark 1.3]{yyz14}.
\end{remark}

In what follows, for $\phi\in\cg(\urn)$ and all cubes $Q:=Q(x,r)\subset\rn$ with
center $x\in\rn$ and radius $r\in(0,\fz)$, define $\phi(Q):=\phi(Q(x,r)):=\phi(x,r)$.
Let $\cs(\rn)$ be the \emph{space of all Schwartz functions} on
 $\rn$ and $\cs'(\rn)$
its \emph{topological dual space}.
A pair of functions, $(\vz, \Phi)$, is said to be \emph{admissible} if
$\vz,\ \Phi\in\cs(\rn)$ satisfy
\begin{equation}\label{x.1}
 {\rm supp}\, \wh \vz\subset\{\xi\in\rn:\ 1/2\le|\xi|\le2\}\
 {\rm and}\ |\wh \vz(\xi)|\ge
c>0\ {\rm when}\  3/5\le|\xi|\le5/3
\end{equation}
and
\begin{equation}\label{x.2}
{\rm supp}\, \wh \Phi\subset\{\xi\in\rn:\ |\xi|\le2\}\ {\rm and}\
|\wh\Phi(\xi)|\ge c>0\ {\rm when}\
|\xi|\le5/3,
\end{equation}
where $\wh f(\xi):=\int_{\rn}f(x)e^{-ix\cdot\xi}\,dx$ for all $\xi\in\rn$
and $c$ is a positive constant independent of $\xi\in\rn$.
For all $j\in\nn$,
$\vz\in\cs(\rn)$ and $x\in\rn$, we put
$\vz_j(x):=2^{jn}\vz(2^jx)$ and $\wz \vz(x):=\overline{\vz(-x)}$.
For $j\in\zz$ and $k\in\zz^n$,
denote by $Q_{jk}$ the \emph{dyadic cube} $2^{-j}([0,1)^n+k)$,
$x_{Q_{jk}}:=2^{-j}k$
its \emph{lower left corner} and $\ell(Q_{jk})$ its \emph{side length}.
Let $\cq:=\{Q_{jk}:\ j\in\zz,\ k\in\zz^n\}$,
$\cq^\ast:=\{Q\in\cq:\ \ell(Q)\le1\}$
 and $j_Q:=-\log_2\ell(Q)$ for all $Q\in\cq$.

Now we introduce the Besov-type space with variable smoothness
and integrability.

\begin{definition}\label{def-b}
Let $(\vz,\Phi)$ be a pair of admissible functions on $\rn$.
Let $p$, $q\in C^{\log}(\rn)$, $s\in  C_{\loc}^{\log}(\rn)\cap L^\fz(\rn)$
 and $\phi\in\mathcal G(\urn)$. Then the \emph{Besov-type space with variable
smoothness and integrability}, $\bbeve$, is defined to be the
set of all $f\in\cs'(\rn)$ such that
\begin{equation*}
\|f\|_{\bbeve}:=\sup_{P\in\cq}\frac1{\phi(P)}
\lf\|\lf\{2^{js(\cdot)}|\vz_j\ast f|\r\}_{j\ge(j_P\vee 0)}\r\|
_{\ell^{q(\cdot)}(L^{p(\cdot)}(P))}<\fz,
\end{equation*}
where the supremum is taken over all dyadic cubes $P$ in $\rn$.
\end{definition}
\begin{remark}\label{r-defi}
Let $p,\ q,\ s$ and $\phi$ be as in Definition \ref{def-b}.

(i) If $\phi(Q)=1$ for all cubes $Q$ of $\rn$, then
$\bbeve=B_{p(\cdot),q(\cdot)}^{s(\cdot)}(\rn),$
where $B_{p(\cdot),q(\cdot)}^{s(\cdot)}(\rn)$ denotes the
\emph{Besov space with variable smoothness and integrability}
introduced in \cite{ah10}.

(ii) If $p,\ q,\ s$ are constant exponents and $\phi(Q):=|Q|^{\tau}$ with
$\tau\in[0,\fz)$ for all cubes $Q$ of $\rn$,
then $\bbeve=B_{p,q}^{s,\tau}(\rn)$, where
$B_{p,q}^{s,\tau}(\rn)$ denotes the \emph{Besov-type space}
introduced in \cite{ysiy}.

(iii) By Remark \ref{re-conv}(iii), we see that, when $q_+=\fz$,
$$\|f\|_{\bbeve}=\|f\|_{B_{p(\cdot),\fz}^{s(\cdot),\phi}(\rn)}:=
\sup_{P\in\cq}\frac1{\phi(P)}\sup_{j\ge(j_P\vee 0)}
\lf\|2^{js(\cdot)}|\vz_j\ast f|\r\|_{L^{p(\cdot)}(P)}.$$

(iv) If $q,\ s$ are constants and $\phi$ is as in (ii), then
$\bbeve=B_{p(\cdot),q}^{s,\tau}(\rn)$, which was investigated in \cite{lsuyy1}.
\end{remark}

We end this section by comparing Besov-type spaces with variable smoothness and
integrability in this article with Musielak-Orlicz Besov-type spaces in \cite{yyzrmc}
and show that, in general, these two scales of Besov-type spaces do not cover each
other.

To recall the definition of Musielak-Orlicz Besov-type spaces, we need some notions
on Musielak-Orlicz functions. A function $\vz:\ \rn\times[0,\fz)\to[0,\fz)$
is called a \emph{Musielak-Orlicz function} if the function $\vz(x,\cdot):\
[0,\fz)\to[0,\fz)$ is an Orlicz function for all $x\in\rn$, namely, for any given
$x\in\rn$, $\vz(x,\cdot)$ is nondecreasing, $\vz(x,0)=0$, $\vz(x,t)\in(0,\fz)$ for
all $t\in(0,\fz)$ and $\lim_{t\to\fz}\vz(x,t)=\fz$, and $\vz(\cdot,t)$ is a
Lebesgue measurable function for all $t\in[0,\fz)$. A Musielak-Orlicz function
$\vz$ is said to be of \emph{uniformly upper} (resp. \emph{lower})
\emph{type $p$} for some $p\in[0,\fz)$ if there exists a positive constant $C$ such that,
for all $x\in\rn$, $t\in[0,\fz)$ and $s\in[1,\fz)$ (resp. $s\in[0,1]$),
$\vz(x,st)\le Cs^p\vz(x,t)$ (see \cite{ky14}).
Let
$$i(\vz):=\sup\{p\in(0,\fz):\ \vz\ {\rm is\ of\ uniformly\ lower\ type}\ p\}$$
and
$$I(\vz):=\inf\{p\in(0,\fz):\ \vz\ {\rm is\ of\ uniformly\ upper\ type}\ p\}.$$
The function $\vz(\cdot,t)$ is said to satisfy the
\emph{uniformly Muckenhoupt condition for some $r\in[1,\fz)$}, denoted by
$\vz\in \aa_r(\rn)$, if, when $r\in(1,\fz)$,
$$\sup_{t\in(0,\fz)}\sup_{{\rm balls}\ B\subset\rn}
\frac1{|B|^r}\int_B\vz(x,t)\,dx\lf\{\int_B[\vz(y,t)]^{-r'/r}\,dy\r\}^{r/r'}<\fz,$$
where $1/r+1/r'=1$, or, when $r=1$,
$$\sup_{t\in(0,\fz)}\sup_{{\rm balls}\ B\subset\rn}
\frac1{|B|}\int_B\vz(x,t)\,dx\lf\{\mathop{\rm ess\,sup}\limits_{y\in B}
[\vz(y,t)]^{-1}\r\}<\fz.$$
Let $\aa_\fz(\rn):=\cup_{r\in[1,\fz)}\aa_r(\rn)$.

The \emph{Musielak-Orlicz space} $L^{\vz}(\rn)$ is defined as the set of all
measurable functions $f$ on $\rn$ such that
$$\|f\|_{L^\vz(\rn)}:=\inf\lf\{\lz\in(0,\fz):\
\int_\rn\vz(x,|f(x)|/\lz)\,dx\le1\r\}<\fz.$$

Let $\cs_\fz(\rn)$ be the \emph{space} of all Schwartz functions $h$ satisfying
that, for all multi-indices $\gamma:=(\gamma_1,\dots,\gamma_n)\in\zz_+^n$,
$\int_\rn h(x)x^\gamma\,dx=0$ and let $\cs'_\fz(\rn)$ be its \emph{topological dual space}.
Now we recall the definition of Musielak-Orlicz Besov-type spaces from \cite{yyzrmc} as follows.

\begin{definition}\label{d-mob}
Let $s\in\rr$, $\tau\in[0,\fz)$, $q\in(0,\fz]$ and $\psi$ be a Schwartz function
satisfying supp\,$\wh\psi\subset\{\xi\in\rn:\ 1/2\le|\xi|\le2\}$ and
$|\wh\psi(\xi)|\ge C>0$ if $3/5\le|\xi|\le5/3$ for some positive constant $C$
independent of $\xi\in\rn$. For all $j\in\zz$ and $x\in\rn$, let
$\psi_j(x):=2^{jn}\psi(2^jx)$. Assume that, for $j\in\{1,2\}$, $\vz_j$ is a
Musielak-Orlicz function with $0<i(\vz_j)\le I(\vz_j)<\fz$ and $\vz_j\in\aa_\fz(\rn)$.
Then the \emph{Musielak-Orlicz Besov-type space}
$\dot B_{\vz_1,\vz_2,q}^{s,\tau}(\rn)$ is defined to be the space of all
$f\in\cs_\fz'(\rn)$ such that
$$\|f\|_{\dot B_{\vz_1,\vz_2,q}^{s,\tau}(\rn)}
:=\sup_{P\in\cq}\frac1{\|\chi_P\|_{L^{\vz_1}(\rn)}}
\lf\|\lf\{\sum_{j=j_P}^\fz(2^{js}|\psi_j\ast f|)^q\r\}^{1/q}
\r\|_{L^{\vz_2}(\rn)}<\fz$$
with suitable modification made when $q=\fz$, where the supremum is taken over
all dyadic cubes $P$ of $\rn$.

\begin{remark}\label{r-com}
(i) Observe that, if $\vz(x,t):=t^{p(x)}$ for all $x\in\rn$ and $t\in[0,\fz)$,
then $L^\vz(\rn)=L^{p(\cdot)}(\rn)$.

(ii) Let $\vz_1$ be as in Definition \ref{d-mob}.
If $\phi(P):=\|\chi_P\|_{L^{\vz_1}(\rn)}$ for all cubes $P\subset\rn$, then, by \cite[Lemma 2.6]{zyl14}
and \cite[Remark 1.3(iv)]{yyz14}, we see that $\phi$ satisfies
\eqref{phi-1} and \eqref{phi-2}.

(iii) The scale of Besov-type spaces with variable smoothness and integrability
can not be covered by the scale of
Musielak-Orlicz Besov-type spaces. Indeed, by \cite[Remark 2.23(iii)]{yyzrmc},
we find that there exists some function $p(\cdot)$ satisfying conditions in Definition
\ref{def-b}, but $t^{p(\cdot)}$ is not a Musielak-Orlicz function as in Definition
\ref{d-mob}.

(iv) Also, the scale of Besov-type spaces  with variable smoothness and integrability
can not
cover the scale of Musielak-Orlicz Besov-type spaces, since a Musielak-Orlicz function
$\vz(x,t)$ may not be written as $\vz(x,t):=t^{p(x)}$ for all
$x\in\rn$ and $t\in[0,\fz)$ with some variable exponent $p(\cdot)$ as in Definition
\ref{def-b} (see, for example, the Musielak-Orlicz function $\vz$ as in
\cite[(1.5)]{yyzrmc}).
\end{remark}

\end{definition}

\section{The $\vz$-transform characterization\label{s-trans}}

\hskip\parindent
The purpose of this section is to show that
$\bbeve$ is independent of the choice of admissible function pairs
$(\vz,\Phi)$. To this end, we first introduce
the sequence space $\beve$ with respect to
 $\bbeve$ and then establish its $\vz$-transform characterization
in the sense of Frazier and Jawerth \cite{fj90}.

\begin{definition}\label{d-be}
Let $p$, $q$, $s$ and $\phi$ be as in Definition
\ref{def-b}. Then the \emph{sequence space} $\beve$ is defined to be the set of all
sequences $t:=\{t_Q\}_{Q\in\cq^\ast}\subset \cc$ such that
\begin{eqnarray*}
\|t\|_{\beve}:=\sup_{P\in\cq}\frac1{\phi(P)}
\lf\|\lf\{\sum_{\gfz{Q\in\cq^\ast,\,Q\subset P}{\ell(Q)=2^{-j}}}
|Q|^{-\frac{s(\cdot)}n}|t_Q|\wz\chi_Q\r\}_{j\ge(j_P\vee 0)}\r\|
_{\ell^{q(\cdot)}(L^{p(\cdot)}(P))}<\fz,
\end{eqnarray*}
where the supremum is taken over all dyadic cubes $P$ in $\rn$.
\end{definition}
\begin{remark}\label{r-lattice}
Let
$\mathcal D_0(\rn):=\{Q\subset\rn:\ Q\ {\rm is\ a\ cube\ and}\
\ell(Q)=2^{-j_0}\ {\rm for\ some}\
j_0\in\zz\}.$
Then the supremum in Definitions \ref{def-b} and \ref{d-be}
can be equivalently taken over all cubes in $\mathcal D_0(\rn)$,
the details being omitted.
\end{remark}

Let $(\vz,\Phi)$ be a pair of admissible functions. Then $(\wz \vz,\wz \Phi)$
is also a pair of admissible functions, where $\wz\vz(\cdot):=\ov{\vz(-\cdot)}$
and $\wz\Phi(\cdot):=\ov{\Phi(-\cdot)}$.
Moreover, by \cite[pp.\,130-131]{fj90} or \cite[Lemma (6.9)]{fjw91},
there exist
Schwartz functions $\psi$ and $\Psi$ satisfying \eqref{x.1} and \eqref{x.2},
respectively, such that, for all $\xi\in\rn$,
\begin{equation}\label{cz1}
\widehat{\Phi}(\xi)\widehat{\Psi}(\xi)+
\sum_{j=1}^\fz\widehat{\vz}(2^{-j}\xi)\widehat{\psi}(2^{-j}\xi)=1.
\end{equation}
Recall that the \emph{$\vz$-transform}
$S_\vz$ is defined to be the mapping taking each $f\in\cs'(\rn)$ to the
sequence $S_\vz(f):=\{(S_\vz f)_Q\}_{Q\in\cq^\ast}$,
where $(S_\vz f)_Q:=|Q|^{1/2}\vz_{j_Q}\ast f(x_Q)$ with $\vz_0$ replaced by $\Phi$;
the \emph{inverse $\vz$-transform} $T_\psi$ is defined to
be the mapping taking a sequence $t:=\{t_Q\}_{Q\in\cq^\ast}\subset\cc$ to
\begin{equation}\label{psi}
T_\psi t:=\sum_{Q\in\cq^\ast,\,\ell(Q)=1}t_Q\Psi_Q
+\sum_{Q\in\cq^\ast,\,\ell(Q)<1}t_Q\psi_Q;
\end{equation}
see, for example, \cite[p.\,31]{ysiy}.

Now we state the following $\vz$-transform characterization for $\bbeve$, which
is the main result of this section.
\begin{theorem}\label{t-transform}
Let $p,\ q$, $s$ and $\phi$ be as in Definition \ref{def-b}
and $\vz,\ \psi,\ \Phi$ and $\Psi$ as in \eqref{cz1}.
Then operators $S_\vz:\ B_{p(\cdot),q(\cdot)}^{s(\cdot),\phi}(\rn)
\to b_{p(\cdot),q(\cdot)}^{s(\cdot),\phi}(\rn)$ and
$T_\psi:\ b_{p(\cdot),q(\cdot)}^{s(\cdot),\phi}(\rn)
\to B_{p(\cdot),q(\cdot)}^{s(\cdot),\phi}(\rn)$ are bounded. Furthermore,
$T_\psi\circ S_\vz$ is the identity on
$B_{p(\cdot),q(\cdot)}^{s(\cdot),\phi}(\rn)$.
\end{theorem}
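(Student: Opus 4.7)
The plan is to split the proof into three tasks: boundedness of $S_\vz\colon\bbeve\to\beve$, boundedness of $T_\psi\colon\beve\to\bbeve$, and the identity $T_\psi\circ S_\vz=\mrm{id}$ on $\bbeve$. Once the first two mappings are proved bounded, the identity will be a consequence of the discrete Calder\'on reproducing formula \eqref{cz1}: iterating \eqref{cz1} writes any $f\in\cs'(\rn)$ as $\Phi\ast\wz\Psi\ast f+\sum_{j\ge1}\wz\vz_j\ast\wz\psi_j\ast f$; a standard sampling argument then replaces each continuous convolution by its values at the lattice points $x_Q$, $Q\in\cq^\ast$, and the already-proved boundedness of $S_\vz$ and $T_\psi$ supplies both convergence in $\cs'(\rn)$ and the rearrangement of sums needed to recognize the result as $T_\psi\circ S_\vz f$.

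For the boundedness of $S_\vz$, I would fix a dyadic cube $P\in\cq$ and a generation $j\ge j_P\vee0$, and note that for each $Q\in\cq^\ast$ with $Q\subset P$, $\ell(Q)=2^{-j}$, and each $x\in Q$ one has $|x-x_Q|\ls2^{-j}$. The $r$-trick lemma from \cite[Lemma A.6]{dhr09} then yields, for every $r>0$ small enough that $p(\cdot)/r$ and $q(\cdot)/r$ remain admissible, the pointwise estimate
\[
|Q|^{-s(x)/n}|(S_\vz f)_Q|\wz\chi_Q(x)\ls 2^{js(x)}\lf[\eta_{j,m}\ast|\vz_j\ast f|^r(x)\r]^{1/r}\chi_Q(x),
\]
where $\eta_{j,m}(y):=2^{jn}(1+2^j|y|)^{-m}$ for $m$ as large as needed; here $s\in C_\loc^{\log}(\rn)$ is essential in order to absorb $2^{js(\cdot)}$ into the average. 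Summing over the admissible $Q$ replaces the left-hand side by an indicator of $P$, and invoking the vector-valued convolution inequality \cite[Lemma 4.7]{ah10} on the mixed Lebesgue-sequence space $\ell^{q(\cdot)}(L^{p(\cdot)}(P))$, followed by Lemma \ref{l-modular}, converts the semimodular estimate into $\|S_\vz f\|_{\beve}\ls\|f\|_{\bbeve}$.

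For the boundedness of $T_\psi$, I would apply $\vz_k$ to $T_\psi t=\sum_Q t_Q\psi_Q$ and exploit the almost-orthogonality granted by \eqref{x.1} and \eqref{x.2}: for every large $N$ and $m$ and every $k\in\zz_+$, $Q\in\cq^\ast$,
\[
|\vz_k\ast\psi_Q(x)|\ls 2^{-|k-j_Q|N}\frac{2^{(k\wee j_Q)n}}{(1+2^{k\wee j_Q}|x-x_Q|)^m}.
\]
The main obstacle is the supremum over $P\in\cq$ in the definition of $\|\cdot\|_{\bbeve}$: cubes $Q\not\subset P$ still contribute to $\vz_k\ast T_\psi t$ on $P$ through the tails of $\vz_k\ast\psi_Q$, and these tails must be re-expressed in terms of $\phi$-weighted averages on enlarged cubes $2^iP$. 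This is precisely where the subtle decomposition of dyadic cubes advertised in the introduction enters: I would split the sum over $Q$ according to the three regimes $Q\subset P$, $P\subsetneq Q$, and $Q\cap P=\emptyset$, and in the last regime further index by the dyadic shell $i\in\zz_+$ with $Q\subset 2^{i+1}P\setminus 2^iP$. In each regime the $r$-trick replaces the kernel sum by a Peetre-type maximal function, the convolution inequality \cite[Lemma 4.7]{ah10} controls the mixed Lebesgue-sequence norm, and the doubling and compatibility properties \eqref{phi-1} and \eqref{phi-2} absorb the $\phi$-weights picked up on the enlarged cubes; the geometric factor $2^{-|k-j_Q|N}$, summed in $i$ and in $|k-j_Q|$, converges provided $N$ is chosen larger than the exponents forced by $\|s\|_{L^\fz}$ and the $\phi$-growth. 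Lemma \ref{l-modular} then gives $\|T_\psi t\|_{\bbeve}\ls\|t\|_{\beve}$, completing the proof.
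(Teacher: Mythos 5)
Your overall strategy coincides with the paper's: the $r$-trick lemma plus the Almeida--H\"ast\"o vector-valued convolution inequality for $S_\vz$, almost-orthogonality plus a shell decomposition weighted by $\phi$ for $T_\psi$, and the Calder\'on reproducing formula for the identity. The only organizational difference in the $T_\psi$ half is that the paper does not run the shells directly on $\vz_k\ast T_\psi t$; it first proves the majorant-sequence estimate $\|t^\ast_{r,\lz}\|_{\beve}\ls\|t\|_{\beve}$ (Lemma \ref{l-estimate1}), whose proof contains exactly your splitting into $Q\subset 4P$ versus cubes in the translated shells $P+l\ell(P)$, and then imports the constant-exponent argument of \cite[Theorem 2.1]{ysiy} with Lemmas \ref{l-welld} and \ref{l-estimate1} substituted for their classical counterparts. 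Your plan is that same computation unpacked, so I would not call it a different route.

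There is, however, one step in your $S_\vz$ argument that fails as written. After the $r$-trick you have $|Q|^{-s(x)/n}|(S_\vz f)_Q|\wz\chi_Q(x)\ls 2^{js(x)}[\eta_{j,m}\ast|\vz_j\ast f|^r(x)]^{1/r}$ for $x\in Q\subset P$, but the convolution $\eta_{j,m}\ast|\vz_j\ast f|^r$ evaluated on $P$ sees $f$ on all of $\rn$, whereas the quantity you must land on is $\phi(P)\,\|f\|_{\bbeve}$, and $\|f\|_{\bbeve}$ only controls $\|\{2^{js(\cdot)}|\vz_j\ast f|\}_j\|_{\ell^{q(\cdot)}(L^{p(\cdot)}(P'))}/\phi(P')$ cube by cube. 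Invoking \cite[Lemma 4.7]{ah10} directly (it is an inequality on all of $\rn$) produces the global mixed norm on the right-hand side, which is not dominated by $\phi(P)\|f\|_{\bbeve}$ when, for instance, $\phi(P)=|P|^\tau$ with $\tau>0$ and $P$ is small. The paper closes this by first splitting the tail of the $r$-trick integral over lattice translates, $|\vz_j\ast f(x_{Q_{jk}})|\ls[\sum_{l\in\zz^n}(1+|l|)^{-m}\eta_{j,3m}\ast|(\vz_j\ast f)\chi_{Q_{j(k+l)}}|^r]^{1/r}$, so that after the convolution inequality each term is the localized seminorm on a dilated cube $3n|l|P$, and then paying for the weight with Lemma \ref{l-esti-cube}, i.e.\ $\phi(3n|l|P)/\phi(P)\ls(1+|l|)^{2\log_2 c_1}$, against the decay $(1+|l|)^{-m}$ with $m$ large. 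You deploy precisely this mechanism in your $T_\psi$ sketch; it is equally indispensable in the $S_\vz$ half.
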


\begin{remark}
(i) The conclusion of Theorem \ref{t-transform} is new even when $\phi\equiv1$.

(ii) If $p,\ q$, $s$ and $\phi$ are as in Remark \ref{r-defi}(ii),
then Theorem \ref{t-transform} goes back to \cite[Theorem 2.1]{ysiy}.

(iii) $T_\psi$ is well defined for all
$t\in b_{p(\cdot),q(\cdot)}^{s(\cdot),\phi}(\rn)$;
see Lemma \ref{l-welld} below.
\end{remark}

From Theorem \ref{t-transform} and an argument similar to that used in
\cite[Remark 2.6]{fj90}, we immediately deduce the following conclusion,
the details being omitted.
 \begin{corollary}\label{c-indepen}
With all notation as in Definition \ref{def-b}, the space $\bbeve$ is independent
of the choice of the admissible function pairs $(\vz,\Phi)$.
\end{corollary}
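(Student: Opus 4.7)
The plan is to use Theorem \ref{t-transform} as an intermediary, exploiting the fact that the sequence space $\beve$ of Definition \ref{d-be} depends only on $p(\cdot)$, $q(\cdot)$, $s(\cdot)$ and $\phi$ and makes no reference to any admissible pair. Let $(\vz^{(1)},\Phi^{(1)})$ and $(\vz^{(2)},\Phi^{(2)})$ be two pairs of admissible functions and, for $i\in\{1,2\}$, let $\psi^{(i)},\Psi^{(i)}$ be the Schwartz functions associated with $(\vz^{(i)},\Phi^{(i)})$ via \eqref{cz1}. Denote by $\|\cdot\|_{\bbeve}^{(i)}$ the quasi-norm of Definition \ref{def-b} computed with the pair $(\vz^{(i)},\Phi^{(i)})$.

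By Theorem \ref{t-transform} applied to each admissible pair, the operator $S_{\vz^{(i)}}:\bbeve^{(i)}\to\beve$ is bounded and the Calder\'on reproducing identity $T_{\psi^{(i)}}\circ S_{\vz^{(i)}}=\mathrm{Id}$ holds on $\bbeve^{(i)}$. The crucial observation, which I would verify by inspecting the proof of the boundedness of $T_\psi$ in Theorem \ref{t-transform}, is that that argument uses only that the pair $(\vz,\Phi)$ appearing inside the defining $\bbeve$-quasi-norm is admissible and that the $\psi$ used in the series defining $T_\psi$ is a Schwartz function satisfying \eqref{x.1}; it never invokes the reproducing identity \eqref{cz1} coupling these two families. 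Consequently, the \emph{mixed} operator $T_{\psi^{(1)}}:\beve\to\bbeve^{(2)}$ is also bounded by the very same proof.

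Granting this, the independence is immediate. For $f\in\cs'(\rn)$ with $\|f\|_{\bbeve}^{(1)}<\fz$, apply the reproducing formula of pair $1$ to write $f=T_{\psi^{(1)}}S_{\vz^{(1)}}f$ in $\cs'(\rn)$ and then chain the three bounds,
\[
\|f\|_{\bbeve}^{(2)}
=\lf\|T_{\psi^{(1)}}S_{\vz^{(1)}}f\r\|_{\bbeve}^{(2)}
\ls \lf\|S_{\vz^{(1)}}f\r\|_{\beve}
\ls \|f\|_{\bbeve}^{(1)},
\]
using, in order, the reproducing identity from Theorem \ref{t-transform} for pair $1$, the mixed boundedness of $T_{\psi^{(1)}}$, and the boundedness of $S_{\vz^{(1)}}$ from Theorem \ref{t-transform}. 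Swapping the roles of the two pairs yields the reverse inequality; hence the two quasi-norms are equivalent and $\bbeve$ is well defined.

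The main obstacle is the verification of the mixed boundedness $T_{\psi^{(1)}}:\beve\to\bbeve^{(2)}$. Concretely, one has to revisit the proof of Theorem \ref{t-transform} and confirm that the convolutional and almost-diagonal estimates controlling terms of the form $\vz^{(2)}_j\ast\psi^{(1)}_Q$ (together with the corresponding endpoint term involving $\vz^{(2)}_j\ast\Psi^{(1)}_Q$) require nothing more than each of $\vz^{(2)}$ and $\psi^{(1)}$ satisfying its own support, smoothness, and moment conditions, together with the \emph{r-trick lemma} (Lemma \ref{l-r-trick}) and the convolution inequality (Lemma \ref{l-conv-ineq}) used there. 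Once this decoupling is observed, the remainder of the proof is a purely formal chaining of the bounds already supplied by Theorem \ref{t-transform}.
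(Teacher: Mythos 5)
Your proposal is correct and is essentially the argument the paper intends: the paper deduces the corollary from Theorem \ref{t-transform} "and an argument similar to that used in \cite[Remark 2.6]{fj90}", which is precisely the chaining $\|f\|^{(2)}=\|T_{\psi^{(1)}}S_{\vz^{(1)}}f\|^{(2)}\ls\|S_{\vz^{(1)}}f\|_{\beve}\ls\|f\|^{(1)}$ via the mixed boundedness of $T_{\psi^{(1)}}$ into the space defined by the second pair. Your identification of the decoupling (the proof of the $T_\psi$ bound uses only the individual support/smoothness conditions, not the identity \eqref{cz1}) is exactly the point that makes the omitted details work.
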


The remainder of this section is to prove Theorem \ref{t-transform}.
We begin with the following Lemmas \ref{l-esti-cube} and \ref{l-cube-1}, which
 are just
\cite[Lemma 2.5]{yyz14} and \cite[Lemma 2.6]{yyz14}, respectively.

\begin{lemma}\label{l-esti-cube}
Let $\phi\in\cg(\urn)$. Then
there exist positive constants $C$ and $\wz C$ such that,
for all $j\in\zz_+$ and $k\in\zz^n$,
$\phi(Q_{jk})\le C 2^{j\log_2c_1}(|k|+1)^{2\log_2c_1}$
and, for all $Q\in\cq$ and $l\in\zz^n$,
$$\frac{\phi(Q+l\ell(Q))}{\phi(Q)}\le \wz C(1+|l|)^{2\log_2c_1},$$
where $c_1$ is as in \eqref{phi-1}.
\end{lemma}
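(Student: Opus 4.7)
The plan is to derive both inequalities by a single mechanism that combines the doubling condition \eqref{phi-1}, which controls how $\phi(x,r)$ responds to changing $r$ at a fixed center $x$, with the compatibility condition \eqref{phi-2}, which allows translating the center only at scales that dominate the translation distance. The underlying scheme: to compare $\phi(x,r)$ with $\phi(y,r)$ when $|x-y|$ may exceed $r$, first iterate \eqref{phi-1} to raise both to a common scale $R\ge|x-y|$, invoke \eqref{phi-2} at scale $R$, and then iterate \eqref{phi-1} back down to scale $r$. The two round trips of doubling (up and down) are precisely what produces the exponent $2\log_2c_1$ in both conclusions.

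For the first inequality, fix $j\in\zz_+$ and $k\in\zz^n$. The center $c_{jk}$ of $Q_{jk}$ satisfies $|c_{jk}|\le 2^{-j}(|k|+\sqrt n/2)\le |k|+\sqrt n/2$ and the side length is $2^{-j}$. Iterating \eqref{phi-1} $j$ times yields
$$
\phi(Q_{jk})=\phi(c_{jk},2^{-j})\le c_1^{j}\phi(c_{jk},1).
$$
Pick $N\in\zz_+$ minimal with $2^{N}\ge |c_{jk}|+1$, so that $2^{N}\ls |k|+1$. Then the scheme above gives
$$
\phi(c_{jk},1)\le c_1^{N}\phi(c_{jk},2^{N})\le c_2\,c_1^{N}\phi(0,2^{N})\le c_2\,c_1^{2N}\phi(0,1),
$$
where \eqref{phi-2} is applied in the middle step because $|c_{jk}|\le 2^{N}$. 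Since $c_1^{N}=2^{N\log_2c_1}\ls(|k|+1)^{\log_2c_1}$ and $\phi(0,1)$ is a fixed positive number, multiplying through and absorbing constants gives $\phi(Q_{jk})\le C\,2^{j\log_2c_1}(|k|+1)^{2\log_2c_1}$.

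For the second inequality the scheme is the same. Let $Q$ have center $c_Q$ and side length $\ell(Q)$; then $Q+l\ell(Q)$ has center $c_Q+l\ell(Q)$ and the same side length. Choose $N\in\zz_+$ minimal with $2^{N}\ge|l|+1$, so $|l|\ell(Q)\le 2^{N}\ell(Q)$. Raising the scale from $\ell(Q)$ to $2^{N}\ell(Q)$ at each center via $N$ applications of \eqref{phi-1}, translating via \eqref{phi-2} at scale $2^{N}\ell(Q)$, and returning to scale $\ell(Q)$ with another $N$ applications of \eqref{phi-1}, one obtains
$$
\phi(Q+l\ell(Q))\le c_2\,c_1^{2N}\phi(Q)\le \wz C(|l|+1)^{2\log_2c_1}\phi(Q),
$$
as required.

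I do not anticipate any substantive obstacle; the proof is essentially combinatorial bookkeeping of the doubling iterations. The only point requiring some care is the choice of the intermediate scale $2^{N}$: it must be large enough for \eqref{phi-2} to be applicable, yet small enough that $c_1^{N}$ is controlled by a power of $|k|+1$ or $|l|+1$, respectively; the minimality in the definition of $N$ achieves precisely this balance.
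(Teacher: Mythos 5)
Your proof is correct, and it uses the standard mechanism that the cited source (\cite[Lemmas 2.5 and 2.6]{yyz14}, to which this paper defers without reproducing a proof) relies on: iterate the doubling condition \eqref{phi-1} up to a scale dominating the displacement of centers, apply the compatibility condition \eqref{phi-2} once at that scale, and iterate \eqref{phi-1} back down, the two passes of doubling accounting for the exponent $2\log_2c_1$. All the bookkeeping (the choice of the minimal $N$, the bound $2^N\lesssim|k|+1$ resp. $2^N\lesssim|l|+1$, and the nonnegativity of $\log_2c_1$ needed to convert $c_1^{2N}$ into the stated powers) checks out.
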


\begin{lemma}\label{l-cube-1}
Let $p\in  C^{\log}(\rn)$. Then there exists a positive
constant $C$ such that, for all dyadic cubes $Q_{jk}$ with $j\in\zz_+$ and $k\in\zz^n$,
\begin{equation*}
C^{-1}2^{-\frac n{p_-}j}(1+|k|)^{n(\frac1{p_+}-\frac 1{p_-})}
\le \|\chi_{Q_{jk}}\|_{L^{p(\cdot)}(\rn)}
\le C2^{-\frac n{p_+}j}(1+|k|)^{n(\frac1{p_-}-\frac 1{p_+})}.
\end{equation*}
\end{lemma}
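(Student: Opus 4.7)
\textbf{Proof plan for Lemma \ref{l-cube-1}.} The plan is to reduce both inequalities to a single modular computation via Lemma \ref{l-modular} and Remark \ref{r-modular}, which identify $\|\chi_{Q_{jk}}\|_{L^{p(\cdot)}(\rn)}$ as the smallest $\lambda>0$ making $\int_{Q_{jk}}\rho_{p(x)}(1/\lambda)\,dx\le 1$. Since $j\in\zz_+$ forces $|Q_{jk}|=2^{-jn}\le 1$, the relevant $\lambda$ will itself be $\le 1$ (in the generic case $p_+<\fz$), and hence $t\mapsto \lambda^{-t}$ is nondecreasing on $[0,\fz)$. Inserting the pointwise a.e.\ bound $p_-\le p(x)\le p_+$ into the resulting integral gives
$$|Q_{jk}|\,\lambda^{-p_-}\,\le\,\int_{Q_{jk}}\lambda^{-p(x)}\,dx\,\le\, |Q_{jk}|\,\lambda^{-p_+},$$
so equating the middle expression with $1$ and solving for $\lambda$ sandwiches the Luxemburg norm between $2^{-jn/p_-}$ and $2^{-jn/p_+}$, with no $k$-dependence.

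Now observe that the factor $(1+|k|)^{n(1/p_- -1/p_+)}\ge 1$ in the claimed upper bound and its reciprocal $(1+|k|)^{n(1/p_+ -1/p_-)}\le 1$ in the claimed lower bound only loosen these two $k$-free sandwich bounds, so both asserted inequalities follow at once (one may even take $C=1$ when $p_+<\fz$). The case $p_+=\fz$ requires a short separate argument: the semimodular convention $\rho_{p(x)}(t)=\fz$ for $t>1$ forces $\lambda\ge 1$ as soon as $\{p=\fz\}\cap Q_{jk}$ has positive measure, while $2^{-jn/p_+}=1$ makes the asserted upper bound $C(1+|k|)^{n/p_-}$ at least $1$; the lower bound is still $\ge 2^{-jn/p_-}$ by the same pointwise argument restricted to $\{p<\fz\}\cap Q_{jk}$, and this dominates $C^{-1}2^{-jn/p_-}(1+|k|)^{-n/p_-}$.

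The only delicate point in this plan is careful bookkeeping of the semimodular conventions at points where $p$ takes the value $\fz$; the hypothesis $p\in C^{\log}(\rn)$ is otherwise not needed for this crude form of the estimate. It would become indispensable if one wished to sharpen the conclusion to $\|\chi_{Q_{jk}}\|_{L^{p(\cdot)}(\rn)}\sim |Q_{jk}|^{1/p(x_{Q_{jk}})}$, in which case one would invoke local log-H\"older continuity to control the variation of $p$ across $Q_{jk}$ and the global condition to compare $p(x_{Q_{jk}})$ with $p_\fz$; the matching of those two regimes around $2^{-j}|k|\sim 1$ would then be the main obstacle. For the weak form stated here, however, the crude modular argument above suffices, and I would expect the proof in \cite{yyz14} to proceed along essentially this route.
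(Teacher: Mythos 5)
Your proposal is correct, and it is worth pointing out that the paper itself contains no proof of this statement: Lemma \ref{l-cube-1} is imported verbatim as \cite[Lemma 2.6]{yyz14}, so your self-contained modular argument is a genuinely different (and more elementary) route. Reading the Luxemburg norm off the modular $\int_{Q_{jk}}\lambda^{-p(x)}\,dx$ and using only $p_-\le p(x)\le p_+$ together with $|Q_{jk}|=2^{-jn}\le1$ (so that $\lambda\mapsto\lambda^{-t}$ is monotone in $t$ for the relevant $\lambda\le1$) does yield the two-sided estimate $2^{-jn/p_-}\le\|\chi_{Q_{jk}}\|_{L^{p(\cdot)}(\rn)}\le2^{-jn/p_+}$; since $n(1/p_+-1/p_-)\le0\le n(1/p_--1/p_+)$, the powers of $(1+|k|)$ only relax these bounds, so the stated inequalities hold even with $C=1$ when $p_+<\fz$. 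Your treatment of $p_+=\fz$ is also sound: the convention $\rho_{\fz}(t)=0$ for $t\in[0,1]$ gives $\varrho_{p(\cdot)}(\chi_{Q_{jk}})\le|Q_{jk}|\le1$, hence $\|\chi_{Q_{jk}}\|_{L^{p(\cdot)}(\rn)}\le1=2^{-jn/p_+}$, while for $\lambda<2^{-jn/p_-}\le1$ the modular exceeds $1$ either because $\{p=\fz\}\cap Q_{jk}$ has positive measure (modular $=\fz$) or by the same monotonicity on $\{p<\fz\}\cap Q_{jk}$. Two small remarks: one solves the two \emph{outer} expressions of your sandwich for $\lambda$, not the middle one (a slip of phrasing, not of substance); and your observation that the log-H\"older hypothesis is not needed for this padded formulation is accurate --- it becomes essential only for the sharp estimate $\|\chi_{Q_{jk}}\|_{\vlp}\sim|Q_{jk}|^{1/p(x)}$, $x\in Q_{jk}$, on which the cited source presumably relies, whereas the present paper only ever uses Lemma \ref{l-cube-1} for polynomial-in-$(j,|k|)$ control (e.g.\ in the proof of Proposition \ref{p-sbs}), for which your crude bound suffices.
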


In what follows, for all $h\in\cs(\rn)$ and $M\in\zz_+$, let
$$\|h\|_{\cs_M(\rn)}
:=\sup_{|\gamma|\le M}\sup_{x\in\rn}|\partial^\gamma h(x)|(1+|x|)^{n+M+\gamma}.$$

\begin{lemma}\label{l-welld}
Let $p,\ q$, $s$ and $\phi$ be as in Definition \ref{def-b}.
Then, for all $t\in b_{p(\cdot),q(\cdot)}^{s(\cdot),\phi}(\rn)$,
$T_\psi t$ in \eqref{psi} converges in $\cs'(\rn)$; moreover,
$T_\psi:\ b_{p(\cdot),q(\cdot)}^{s(\cdot),\phi}(\rn)\to \cs'(\rn)$
is continuous.
\end{lemma}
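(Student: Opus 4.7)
The plan is to establish, for every $h \in \cs(\rn)$, an estimate of the form
$$
\sum_{Q \in \cq^\ast} |t_Q|\,|\langle \psi_Q, h\rangle| \lesssim \|h\|_{\cs_M(\rn)}\,\|t\|_{\beve},
$$
with $\psi_Q$ understood as $\Psi_Q$ when $\ell(Q) = 1$ and $M \in \nn$ sufficiently large depending on $p$, $q$, $s$ and $\phi$ but not on $t$ or $h$. Once this single bound is in hand, it simultaneously delivers the absolute convergence of \eqref{psi} in $\cs'(\rn)$ and the continuity of $T_\psi \colon \beve \to \cs'(\rn)$.

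First, I would bound $|\langle \psi_Q, h\rangle|$ by a classical argument. Since $\supp \wh\psi \subset \{\xi \in \rn:\ 1/2 \le |\xi| \le 2\}$ by \eqref{x.1}, $\psi$ has vanishing moments of every order. Combining a Taylor expansion of $h$ around $x_Q$ of order $L$ with the rapid decay of $\psi$ and its derivatives, I would obtain, for any prescribed $L, M \in \zz_+$ and every $Q \in \cq^\ast$ with $\ell(Q) < 1$,
$$
|\langle \psi_Q, h\rangle| \lesssim \|h\|_{\cs_M(\rn)}\,|Q|^{\frac12 + \frac Ln}\,(1 + |x_Q|)^{-(n + M)}.
$$
For the countably many cubes with $\ell(Q) = 1$, the analogous bound without the $|Q|^{L/n}$ factor follows from decay of $\Psi$ alone and is handled identically.

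Next, I would control $|t_Q|$ using Definition \ref{d-be} with $P = Q$. A direct consequence of \eqref{mixed-x} is that $\|g_{j_0}\|_{\vlp} \lesssim \|(g_j)\|_{\ell^{q(\cdot)}(\vlp)}$ for every single index $j_0$: zeroing out the other entries only decreases the semimodular, and a single-entry mixed norm collapses to the $\vlp$-norm of that entry by choosing $\mu_{j_0} = 1$ in \eqref{mixed-x}. Applied at level $j_0 = j_Q$, where the only subcube of $Q$ of side length $2^{-j_Q}$ is $Q$ itself, and using $s \in L^\fz(\rn)$ together with $\ell(Q) \le 1$ to replace $|Q|^{-s(\cdot)/n}$ on $Q$ by $|Q|^{-s(x_Q)/n}$ up to multiplicative constants independent of $Q$, this yields
$$
|t_Q|\,|Q|^{-\frac{s(x_Q)}n - \frac12}\,\|\chi_Q\|_{\vlp} \lesssim \phi(Q)\,\|t\|_{\beve}.
$$

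Finally, I would combine the two bounds over $Q = Q_{jk}$ with $j \in \zz_+$ and $k \in \zz^n$, and invoke Lemmas \ref{l-esti-cube} and \ref{l-cube-1} to replace $\phi(Q_{jk})$ and $\|\chi_{Q_{jk}}\|_{\vlp}^{-1}$ by polynomial factors in $(|k|+1)$ and a power of $2^{j}$. Taking $L, M \in \zz_+$ sufficiently large in terms of $n$, $p_\pm$, $\|s\|_{L^\fz(\rn)}$ and $\log_2 c_1$ makes the $k$-summation converge absolutely and forces the $j$-summation to decay geometrically; both are achievable because $L$ and $M$ may be chosen arbitrarily large in the first step. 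The principal obstacle I expect is the comparison $|Q|^{-s(\cdot)/n} \sim |Q|^{-s(x_Q)/n}$ on $Q$ in the second step: although $s$ is only assumed to be locally log-H\"older, the combination of $s \in L^\fz(\rn)$ and $\ell(Q) \le 1$ is exactly what suffices, and without either of these two assumptions the whole scheme would break down.
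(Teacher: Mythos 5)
Your proposal follows essentially the same route as the paper: bound $|t_Q|$ by testing the $\beve$-norm on $P=Q$ and collapsing the mixed norm to the single level $j=j_Q$ (Remark \ref{re-mixed}(iii)), pair this with the standard moment/decay estimate for $\langle\psi_Q,h\rangle$, and sum using Lemmas \ref{l-esti-cube} and \ref{l-cube-1} after taking $L$ and $M$ large. The one flaw is your justification of the comparison $|Q|^{-s(\cdot)/n}\sim|Q|^{-s(x_Q)/n}$ on $Q$: the hypotheses $s\in L^\fz(\rn)$ and $\ell(Q)\le1$ alone only give the two-sided bound $2^{\pm 2j_Q\|s\|_{L^\fz(\rn)}}$, which is not uniform in $Q$; what actually makes the constants uniform is the local $\log$-H\"older continuity of $s$ (via the standard estimate $2^{j|s(x)-s(x_Q)|}\le C$ for $|x-x_Q|\ls 2^{-j}$), which is part of Definition \ref{def-b}. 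Alternatively, you can avoid the comparison altogether, as the paper does, by using the one-sided bound $|Q|^{-s(x)/n}\ge|Q|^{-s_-/n}$ valid for $|Q|\le1$, which costs only a harmless power $|Q|^{s_-/n}$ absorbed by choosing $L$ large. With that repair the argument is complete and matches the paper's proof.
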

\begin{proof}
Observe that, by Remark \ref{re-mixed}(iii), we find that,
for any $Q\in\cq^\ast$,
\begin{eqnarray*}
|t_Q|&&\le\lf\||Q|^{-\frac{s(\cdot)}n}|t_Q|\wz\chi_Q\r\|_{L^{p(\cdot)}(Q)}
\|\chi_Q\|_{L^{p(\cdot)}(Q)}^{-1}|Q|^{\frac{s_-}n+\frac12}\\
&&\le\lf\|\lf\{\sum_{\gfz{\wz Q\subset Q,\,\wz Q\in\cq^\ast}{\ell(\wz Q)=2^{-j}}}
|\wz Q|^{-\frac{s(\cdot)}n}|t_{\wz Q}|\wz\chi_{\wz Q}\r\}_{j\ge(j_Q\vee0)}
\r\|_{\ell^{q(\cdot)}(L^{p(\cdot)}(Q))}
\frac{|Q|^{\frac{s_-}n+\frac12}}{\|\chi_Q\|_{L^{p(\cdot)}(Q)}}\\
&&\le \|t\|_{\beve}\frac{\phi(Q)}{\|\chi_Q\|_{L^{p(\cdot)}(Q)}}
|Q|^{\frac{s_-}n+\frac12}.
\end{eqnarray*}
Then, by this and an argument similar to that used in the proof of \cite[Lemma 2.4]{yyz14},
we conclude that, for all $h\in\cs(\rn)$,
$|\la T_\psi t,h\ra|\ls\|t\|_{\beve}\|h\|_{\cs_M(\rn)}$
with some large $M\in(0,\fz)$, which completes the proof of
Lemma \ref{l-welld}.
\end{proof}

In what follows, for any $m\in(0,\fz)$ and $j\in\zz$, let, for all $x\in\rn$,
$\eta_{j,m}(x):=2^{jn}(1+2^j|x|)^{-m}$.
The following lemma is the so-called \emph{r-trick lemma},
which is \cite[Lemma A.6]{dhr09}.

\begin{lemma}\label{l-r-trick}
Let $r\in(0,\fz)$, $v\in\zz_+$ and $m\in(n,\fz)$. Then there exists a positive
constant $C$, only depending on $r,\ m$ and $n$, such that, for all $x\in\rn$ and
$g\in\cs'(\rn)$ with ${\rm supp}\,\wh g\subset\{\xi:\ |\xi|\le2^{v+1}\}$,
$\sup_{z\in Q}|g(z)|\le C\lf[\eta_{v,m}\ast(|g|^r)(x)\r]^\frac1r$,
where $Q\in\cq$ contains $x$ and $\ell(Q)=2^{-v}$.
\end{lemma}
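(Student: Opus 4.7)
The plan is to follow the classical Peetre maximal function calculus. A first reduction, via the dilation $g\mapsto g(2^{-v}\cdot)$, normalizes $v=0$, so that $\wh g$ is supported in $\{|\xi|\le2\}$, $\eta_{0,m}(y)=(1+|y|)^{-m}$, and the target becomes $\sup_{z\in Q}|g(z)|\le C[\eta_{0,m}\ast|g|^r(x)]^{1/r}$ for any unit cube $Q$ containing $x$. I would fix an auxiliary $\Phi\in\cs(\rn)$ with $\wh\Phi\equiv 1$ on $\{|\xi|\le 2\}$ and $\wh\Phi$ compactly supported; then $g=g\ast\Phi$ pointwise and $|\Phi(y)|\le C_M(1+|y|)^{-M}$ for every $M\in\nn$. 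In the easy case $r\ge 1$, H\"older's inequality in the finite measure $|\Phi(y)|\,dy$ yields $|g(z)|^r\ls\int|g(z-y)|^r|\Phi(y)|\,dy\ls\eta_{0,m}\ast|g|^r(z)$ after choosing $M\ge m$, and the passage from $z$ to $x$ uses $|z-x|\le\sqrt n$ together with $(1+|z-y|)^{-m}\le(1+\sqrt n)^m(1+|x-y|)^{-m}$.

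For the substantive case $0<r<1$, H\"older fails and I would work with an auxiliary Peetre maximal function $g^{\ast}_{M'}(x):=\sup_{y\in\rn}|g(y)|/(1+|x-y|)^{M'}$ with $M'$ to be fixed later. The heart of the matter is the self-improving bound $(g^{\ast}_{M'}(x))^r\ls\eta_{0,m}\ast|g|^r(x)$, which I would derive by inserting the factorization $|g(z-y)|=|g(z-y)|^r|g(z-y)|^{1-r}$ into $g=g\ast\Phi$ and estimating $|g(z-y)|^{1-r}\le(g^{\ast}_{M'}(x))^{1-r}(1+|x-(z-y)|)^{M'(1-r)}$. Choosing the decay exponent $M$ of $\Phi$ to exceed $m+M'(1-r)$ absorbs the extra polynomial factor, leaving a kernel comparable to $\eta_{0,m}(y)$; then the doubling-type bound $\eta_{0,m}\ast|g|^r(z)\le(1+|x-z|)^m\eta_{0,m}\ast|g|^r(x)$ and the choice $M'\ge m/r$ ensure that the resulting power of $(1+|x-z|)$ is at most $M'$, so after dividing by $(1+|x-z|)^{M'}$ and taking the supremum in $z$ we obtain $g^{\ast}_{M'}(x)\ls(g^{\ast}_{M'}(x))^{1-r}\eta_{0,m}\ast|g|^r(x)$. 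Dividing through by the (a priori finite) quantity $(g^{\ast}_{M'}(x))^{1-r}$ closes the inequality, and $\sup_{z\in Q}|g(z)|^r\le(1+\sqrt n)^{M'r}(g^{\ast}_{M'}(x))^r\ls\eta_{0,m}\ast|g|^r(x)$ finishes the case $r<1$.

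The main obstacle is the a priori finiteness of $g^{\ast}_{M'}(x)$ needed to justify the division in the $r<1$ step; I would handle it by first proving the inequality for $g\in\cs(\rn)$, where finiteness is immediate, and then extending to the full class of band-limited tempered distributions via a standard regularization argument, using that such distributions are smooth functions of at most polynomial growth so that $g^{\ast}_{M'}$ is finite once $M'$ exceeds the growth order. A secondary bookkeeping point is that the parameters must be chosen in the correct order ($M'\ge m/r$ first, then $M\ge m+M'(1-r)$) so that the absolute constant in the final estimate depends only on $r$, $m$ and $n$, as required. Undoing the initial dilation $v=0\to v$ then delivers the stated inequality with $\eta_{v,m}(y)=2^{vn}(1+2^v|y|)^{-m}$.
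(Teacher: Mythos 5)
The paper does not prove this statement at all: Lemma \ref{l-r-trick} is imported verbatim as \cite[Lemma A.6]{dhr09} (the ``r-trick''), so there is no in-paper argument to compare against. Your proof is the classical Plancherel--P\'olya--Peetre argument, and it is essentially correct: the dilation reduction to $v=0$ transforms $\eta_{0,m}\ast|G|^r(2^vx)$ exactly into $\eta_{v,m}\ast|g|^r(x)$; the reproducing identity $g=g\ast\Phi$ is legitimate since $\wh\Phi\equiv1$ on a neighbourhood of $\supp\wh g$; the H\"older step for $r\ge1$ and the self-improvement $g^{\ast}_{M'}(x)\ls (g^{\ast}_{M'}(x))^{1-r}\,\eta_{0,m}\ast|g|^r(x)$ for $r<1$ are both sound, and your bookkeeping ($M'\ge m/r$ so that $M'(1-r)+m\le M'$, then $M\ge m+M'(1-r)$ for the decay of $\Phi$) is the correct order of quantifier choices.

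The one point that deserves more care than your sketch gives it is the division step when $0<r<1$. Your fallback ``$g^{\ast}_{M'}$ is finite once $M'$ exceeds the growth order of $g$'' does not by itself close the argument: if $M'$ must be taken $g$-dependently large, the final constant $(1+\sqrt n)^{M'}\,C_{M}$ would depend on $g$, which the lemma forbids. The standard repair is a two-pass argument: first run the estimate with a dampened maximal function (or with the large, $g$-dependent $M'$) merely to conclude the \emph{qualitative} finiteness of $g^{\ast}_{m/r}(x)$ whenever the right-hand side is finite, and only then rerun the computation with the fixed exponent $M'=m/r$ to get the constant depending on $r,m,n$ alone. Your alternative route --- prove the inequality for $g\in\cs(\rn)$ and approximate a general band-limited $g$ by $g(\cdot)h(\epsilon\,\cdot)$ with $h\in\cs(\rn)$, $h(0)=1$ and $\wh h$ compactly supported --- also works (one must only note that the Fourier support enlarges slightly, which is absorbed into the constant), and is perhaps the cleaner way to finish. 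With either repair spelled out, the proof is complete and matches the argument behind the cited source.
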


The following Lemma \ref{l-eta} is just \cite[Lemma 19]{kv12}, which is a variant
of \cite[Lemma 6.1]{dhr09}.
\begin{lemma}\label{l-eta}
Let $s\in  C_{\loc}^{\log}(\rn)$ and $d\in[C_{\log}(s),\fz)$, where
$C_{\log}(s)$ denotes the constant as in \eqref{ve1} with $g$ replaced by $s$.
Then, for all $x,\, y\in\rn$ and $v\in\nn$,
$2^{vs(x)}\eta_{v,m+d}(x-y)\le C2^{vs(y)}\eta_{v,m}(x-y)$
with $C$ being a positive constant independent of $x,\ y$ and $v$;
moreover, for all nonnegative measurable functions $f$,
it holds true that
$$2^{vs(x)}\eta_{v,m+d}\ast f(x)\le C\eta_{v,m}\ast(2^{vs(\cdot)}f)(x),\quad x\in\rn.$$
\end{lemma}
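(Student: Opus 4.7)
My plan is to reduce both statements of the lemma to a single pointwise inequality, and then derive the convolution version by integration.

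First, dividing both sides of the pointwise inequality $2^{vs(x)}\eta_{v,m+d}(x-y)\le C\,2^{vs(y)}\eta_{v,m}(x-y)$ by the common positive factor $2^{vn}(1+2^v|x-y|)^{-m}$, the claim reduces to
$$2^{v(s(x)-s(y))}\le C(1+2^v|x-y|)^{d},$$
uniformly in $x,y\in\rn$ and $v\in\nn$. Passing to logarithms and using $s(x)-s(y)\le|s(x)-s(y)|$ together with \eqref{ve1} applied to $s$, the task becomes to show
$$\frac{v\log 2\cdot C_{\log}(s)}{\log(e+1/|x-y|)}\le d\log(1+2^v|x-y|)+\log C.$$

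Second, I would split according to the size of $2^v|x-y|$. When $2^v|x-y|\le 1$, one has $1/|x-y|\ge 2^v$, hence $\log(e+1/|x-y|)\ge v\log 2$, so the left side above is bounded by the absolute constant $C_{\log}(s)$; since $(1+2^v|x-y|)^d\ge 1$, the desired inequality follows trivially by choosing $\log C\ge C_{\log}(s)$.

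Third, for the decisive range $2^v|x-y|>1$, I would exploit the elementary multiplicative identity
$$(1+2^v|x-y|)(e+1/|x-y|)\ge (2^v|x-y|)\cdot(1/|x-y|)=2^v,$$
which upon taking logarithms yields $\log(1+2^v|x-y|)+\log(e+1/|x-y|)\ge v\log 2$. Multiplying through by $C_{\log}(s)$ and then dividing by $\log(e+1/|x-y|)\ge 1$, I obtain
$$\frac{v\log 2\cdot C_{\log}(s)}{\log(e+1/|x-y|)}\le C_{\log}(s)\log(1+2^v|x-y|)+C_{\log}(s),$$
and the assumption $d\ge C_{\log}(s)$ then delivers the required bound. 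The main obstacle will be getting the sharp constant here: one must bundle the scale $2^v$ into the product of $1+2^v|x-y|$ with $e+1/|x-y|$, so that the log-H\"older information is paid out in exactly the parameter $d$ assumed, rather than some larger multiple of $C_{\log}(s)$.

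Finally, the convolution inequality follows at once: for any nonnegative measurable $f$, multiplying the established pointwise inequality by $f(y)$ and integrating in $y$ over $\rn$ gives
$$2^{vs(x)}\eta_{v,m+d}\ast f(x)=\int_{\rn}2^{vs(x)}\eta_{v,m+d}(x-y)f(y)\,dy\le C\int_{\rn}\eta_{v,m}(x-y)\bigl[2^{vs(y)}f(y)\bigr]\,dy=C\,\eta_{v,m}\ast(2^{vs(\cdot)}f)(x),$$
completing the proof.
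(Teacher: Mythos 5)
Your proof is correct: the reduction to $2^{v(s(x)-s(y))}\le C(1+2^v|x-y|)^{d}$, the case split on the size of $2^v|x-y|$, and the use of $(1+2^v|x-y|)(e+1/|x-y|)\ge 2^v$ together with $\log(e+1/|x-y|)\ge 1$ all check out, and the convolution statement does follow by integrating the pointwise bound against $f(y)\ge 0$. The paper itself gives no proof (it cites \cite[Lemma 19]{kv12}, a variant of \cite[Lemma 6.1]{dhr09}), and your argument is essentially the standard one from those sources.
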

\begin{remark}\label{r-3.10x}
Using the same notion as in Lemma \ref{l-eta}, if $\lz\in[2^{-v},2^{-v}+\theta]$
with $\theta\in[0,\fz)$, then, by an argument similar to that used in
the proof of Lemma \ref{l-eta}, we conclude that there exists a positive constant
$C$ such that, for all $x\in\rn$,
$$\lz^{-s(x)}\eta_{v,m+d}\ast f(x)\le C\eta_{v,m}\ast(\lz^{-s(\cdot)}f)(x).$$
\end{remark}

It is well known that the boundedness of the Hardy-Littlewood maximal operator
plays a key role in the study of the classical theory of function spaces.
However, in the case of variable function spaces, such boundedness is usually absence.
For example, the Hardy-Littlewood maximal operator is in general not
bounded on the mixed Lebesgue-sequence space
$\ell^{q(\cdot)}(L^{p(\cdot)}(\rn))$ (see \cite[Example 4.1]{ah10}).
As a suitable substitute, a convolution with radical decreasing functions fits
very well into this scheme. Indeed, we have the following Lemma \ref{l-conv-ineq},
which is just \cite[Lemma 4.7]{ah10} (see also \cite[Lemma 10]{kv12}).

\begin{lemma}\label{l-conv-ineq}
Let $p,\ q\in  C^{\log}(\rn)$ satisfy $p_-,\ q_-\in[1,\fz]$ and
$m\in(n+C_{\log}(1/q),\fz)$, where $C_{\log}(1/q)$
denotes the constant as in \eqref{ve1} with $g$ replaced by $1/q$.
 Then there exists a positive constant $C$ such that,
for all sequences $\{f_v\}_{v\in\nn}$ of measurable functions,
$$\lf\|\lf\{\eta_{v,m}\ast f_v\r\}_{v\in\nn}
\r\|_{\ell^{q(\cdot)}(L^{p(\cdot)}(\rn))}
\le C\lf\|\lf\{f_v\r\}_{v\in\nn}\r\|_{\ell^{q(\cdot)}(L^{p(\cdot)}(\rn))}.$$
\end{lemma}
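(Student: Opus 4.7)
The plan is to pass from the mixed Lebesgue-sequence norm to the underlying semimodular $\varrho_{\ell^{q(\cdot)}(L^{p(\cdot)})}$ and reduce the vector-valued convolution inequality to the scalar convolution inequality on $L^{p(\cdot)}(\rn)$ given by Remark \ref{re-conv}(i). The log-Hölder regularity of $1/q$, together with the hypothesis $m>n+C_{\log}(1/q)$, is used precisely to transfer the exponent-dependent weights $\mu_v^{1/q(\cdot)}$ across the convolution.

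By Lemma \ref{l-modular} and homogeneity, it suffices to find a constant $C_0\in(1,\fz)$, depending only on $p$, $q$, $m$ and $n$, such that $\varrho_{\ell^{q(\cdot)}(L^{p(\cdot)})}(\{(\eta_{v,m}\ast f_v)/C_0\}_{v\in\nn})\le 1$ whenever $\varrho_{\ell^{q(\cdot)}(L^{p(\cdot)})}(\{f_v\}_{v\in\nn})\le 1$. Using \eqref{mixed-x} with a standard $\ez$-relaxation of the inner infima, the hypothesis supplies nonnegative numbers $\{\mu_v\}_{v\in\nn}$ with $\sum_v\mu_v\le 2$ (so $\mu_v\le 2$ for every $v$) and $\varrho_{p(\cdot)}(f_v/\mu_v^{1/q(\cdot)})\le 1$ for each $v$. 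Setting $\mu'_v:=\mu_v/2$, the estimate to be proved reduces to the family of modular bounds
$$\varrho_{p(\cdot)}\lf((\eta_{v,m}\ast f_v)/(C_0\mu'_v)^{1/q(\cdot)}\r)\le 1,\qquad v\in\nn,$$
for a sufficiently large $C_0$.

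To establish this, I would prove the Peetre-type pointwise weight-transfer
$$\mu_v^{-1/q(x)}\lf|(\eta_{v,m}\ast f_v)(x)\r|\lesssim \eta_{v,m-d}\ast\lf(\mu_v^{-1/q(\cdot)}|f_v|\r)(x),\qquad x\in\rn,$$
in the spirit of Lemma \ref{l-eta} and Remark \ref{r-3.10x}, where $d:=C_{\log}(1/q)$ and the implicit constant is independent of $v$ and of $\mu_v\in(0,2]$. The argument splits the $y$-integration into dyadic shells $\{y:\ 2^{j-1}\le 1+2^v|x-y|\le 2^j\}$, $j\in\zz_+$, and uses the log-Hölder condition \eqref{ve1} for $1/q$ together with $\mu_v\le 2$ to dominate $\mu_v^{1/q(x)-1/q(y)}$ by a constant multiple of $(1+2^v|x-y|)^d$ on each shell; this polynomial tail converts $\eta_{v,m}$ into the still-integrable $\eta_{v,m-d}$ (here $m-d>n$ is used). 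Taking the $L^{p(\cdot)}$-norm of both sides and invoking Remark \ref{re-conv}(i) for the nonnegative radially decreasing $L^1$-function $\eta_{v,m-d}$ (whose $L^1$-norm is bounded uniformly in $v$) gives
$$\lf\|\mu_v^{-1/q(\cdot)}(\eta_{v,m}\ast f_v)\r\|_{L^{p(\cdot)}(\rn)}\lesssim \lf\|\mu_v^{-1/q(\cdot)}f_v\r\|_{L^{p(\cdot)}(\rn)}\le 1,$$
the final inequality by Remark \ref{r-modular} and the normalization. Translating back through Lemma \ref{l-modular} then yields the required modular estimate upon choosing $C_0$ large enough to absorb all implicit constants.

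The main obstacle is the pointwise weight-transfer itself: unlike Lemma \ref{l-eta}, where the shift is by the fixed dyadic scale $\lambda=2^{-v}$ and is tailor-made for log-Hölder exponents, here the weight is a variable-exponent power of an arbitrary parameter $\mu_v\in(0,2]$, and the absorption of $\mu_v^{1/q(x)-1/q(y)}$ into the Peetre tail requires $m$ to exceed $n$ by at least the log-Hölder constant $C_{\log}(1/q)$, which is precisely the hypothesis of the lemma. The case $q_+=\fz$ is separate and easier: by Remark \ref{re-conv}(iii) the mixed norm degenerates to $\|\cdot\|_{\ell^{\fz}(L^{p(\cdot)}(\rn))}$, and applying Remark \ref{re-conv}(i) uniformly in $v$ settles the claim directly.
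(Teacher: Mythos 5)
The paper does not actually prove this lemma: it is quoted verbatim from \cite[Lemma 4.7]{ah10} (see also \cite[Lemma 10]{kv12}), so your argument has to stand on its own, and it has a genuine gap at its central step. The gap is the claimed pointwise weight-transfer
$$\mu_v^{-1/q(x)}\lf|(\eta_{v,m}\ast f_v)(x)\r|\ls \eta_{v,m-d}\ast\lf(\mu_v^{-1/q(\cdot)}|f_v|\r)(x)$$
with a constant uniform over $\mu_v\in(0,2]$. All that the log-H\"older condition \eqref{ve1} for $1/q$ gives is
$$\mu_v^{\frac1{q(y)}-\frac1{q(x)}}\le\exp\lf(\frac{C_{\log}(1/q)\,\log(1/\mu_v)}{\log(e+1/|x-y|)}\r),$$
and for fixed $v$ and fixed $|x-y|$ of order $1$ this tends to $\fz$ as $\mu_v\to0$, whereas your majorant $(1+2^v|x-y|)^{d}$ stays bounded. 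The upper bound $\mu_v\le2$ is useless here; what is needed is a \emph{lower} bound on $\mu_v$. Lemma \ref{l-eta} and Remark \ref{r-3.10x} succeed precisely because the base ($2^{-v}$, or $\lz\ge2^{-v}$) is coupled to the mollifier scale $2^{-v}$; the numbers $\mu_v$ produced by the infima in \eqref{mixed-x} carry no such coupling and can be arbitrarily small (e.g.\ when $f_v$ is small).

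The standard repair --- and the one this paper itself employs in the analogous situations in Lemma \ref{l-equi} and Theorem \ref{t-atomd}, where the quantities $\dz_j^P$ are defined with an extra additive term $2^{-\sigma[j-(j_P\vee0)]}$ or $2^{-j}$ exactly so that a lower bound such as \eqref{delta} holds --- is to replace $\mu_v$ by $\wz\mu_v:=\mu_v+2^{-v}$. Since $\mu\mapsto\varrho_{p(\cdot)}(f_v/\mu^{1/q(\cdot)})$ is nonincreasing, this preserves both $\varrho_{p(\cdot)}(f_v/\wz\mu_v^{1/q(\cdot)})\le1$ and $\sum_v\wz\mu_v\ls1$, while supplying $\wz\mu_v\ge2^{-v}$, which is precisely the hypothesis under which Remark \ref{r-3.10x} legitimizes your weight-transfer at the cost of $d\sim C_{\log}(1/q)$ in the exponent. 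With that modification (and a little care with the final bookkeeping: as written, your new inner infima sum to $C_0$ rather than to $1$, so a further rescaling via Lemma \ref{l-modular} is needed before concluding the quasi-norm bound), the rest of your reduction to the scalar convolution estimate of Remark \ref{re-conv}(i), as well as the degenerate case $q_+=\fz$, goes through.
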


\begin{remark}\label{re-norm}
In Lemma \ref{l-conv-ineq},
we require that $p_-,\ q_-\ge1$.
However, the following observation that, for all $r\in(0,\fz)$ and
sequences $\{g_v\}_{v\in\nn}\subset \ell^{q(\cdot)}(\vlp)$,
\begin{equation*}
\lf\|\{g_v\}_{v\in\nn}\r\|_{\ell^{q(\cdot)}(L^{p(\cdot)}(\rn))}
=\lf\|\{|g_v|^r\}_{v\in\nn}\r\|_{\ell^{\frac{q(\cdot)}r}
(L^{\frac{p(\cdot)}r})(\rn)}^\frac1r
\end{equation*}
makes it possible to apply Lemma \ref{l-conv-ineq}
even when $p_-,\ q_-\in(0,1)$.
\end{remark}

\begin{lemma}\label{l-inf-norm}
Let $p,\ q\in\cp(\rn)$, $q_+\in(0,\fz)$ and $f$ be a measurable function on $\rn$.

{\rm(i)} If $\|f\|_{\vlp}\le1$, then
$\||f|^{q(\cdot)}\|_{L^{\frac{p(\cdot)}{q(\cdot)}}(\rn)}\le \|f\|_{\vlp}^{q_-}$.

{\rm (ii)} If $\|f\|_{\vlp}>1$, then
$\||f|^{q(\cdot)}\|_{L^{\frac{p(\cdot)}{q(\cdot)}}(\rn)}\le \|f\|_{\vlp}^{q_+}$.

{\rm (iii)} If $\||f|^{q(\cdot)}\|_{L^{\frac{p(\cdot)}{q(\cdot)}}(\rn)}\ge1$, then
$\|f\|_{\vlp}\le \||f|^{q(\cdot)}\|_{L^{\frac{p(\cdot)}{q(\cdot)}}(\rn)}^{1/q_-}$.

{\rm (iv)} If $\||f|^{q(\cdot)}\|_{L^{\frac{p(\cdot)}{q(\cdot)}}(\rn)}<1$, then
$\|f\|_{\vlp}\le\||f|^{q(\cdot)}\|_{L^{\frac{p(\cdot)}{q(\cdot)}}(\rn)}^{1/q_+}$.
\end{lemma}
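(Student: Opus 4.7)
The plan is to handle all four parts uniformly: use Lemma~\ref{l-modular} to convert each norm inequality into an equivalent inequality about the corresponding semimodular, and then exploit the bounds $q_- \le q(\cdot) \le q_+$ via the monotonicity of $t \mapsto a^t$.

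The starting point is the pair of equivalences, valid for all $\lz, \mu \in (0, \fz)$:
\begin{align*}
\|f\|_{\vlp} \le \lz &\iff \int_\rn |f(x)|^{p(x)}\lz^{-p(x)}\,dx \le 1, \\
\big\||f|^{q(\cdot)}\big\|_{L^{p(\cdot)/q(\cdot)}(\rn)} \le \mu &\iff \int_\rn |f(x)|^{p(x)}\mu^{-p(x)/q(x)}\,dx \le 1;
\end{align*}
these come from Lemma~\ref{l-modular} applied, respectively, to $\varrho_{p(\cdot)}$ and to the semimodular of $L^{p(\cdot)/q(\cdot)}(\rn)$ acting on $|f|^{q(\cdot)}$, after unfolding the definitions via Remark~\ref{re-mixed}(ii).

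With this in hand, each item reduces to a one-line estimate. For (i), I would set $\alpha := \|f\|_{\vlp} \le 1$ and pick $\mu := \alpha^{q_-}$; since $q_-/q(x) \le 1$ and $\alpha \le 1$, the monotonicity of $t \mapsto \alpha^{-t}$ yields $\alpha^{-q_- p(x)/q(x)} \le \alpha^{-p(x)}$, so
\[
\int_\rn |f(x)|^{p(x)}\mu^{-p(x)/q(x)}\,dx \le \int_\rn |f(x)|^{p(x)}\alpha^{-p(x)}\,dx \le 1,
\]
which gives $\||f|^{q(\cdot)}\|_{L^{p(\cdot)/q(\cdot)}(\rn)} \le \mu = \alpha^{q_-}$ by the second equivalence above. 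Part (ii) is completely analogous: choose $\mu := \alpha^{q_+}$ and use $q_+/q(x) \ge 1$ together with $\alpha > 1$ to produce the same pointwise comparison, only with the monotonicity of $t \mapsto \alpha^{-t}$ now going the other way. For the reverse items, I would set $\beta := \||f|^{q(\cdot)}\|_{L^{p(\cdot)/q(\cdot)}(\rn)}$ and take $\lz := \beta^{1/q_-}$ when $\beta \ge 1$ (for (iii)) or $\lz := \beta^{1/q_+}$ when $\beta < 1$ (for (iv)), and verify $\lz^{-p(x)} \le \beta^{-p(x)/q(x)}$ by the same sort of case split on whether the base is $\le 1$ or $> 1$.

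The main subtlety I expect is the treatment of the set where $p(x) = \fz$: there $\rho_{p(x)}$ is a threshold indicator rather than a power, and the pointwise exponent comparison has to be reinterpreted as a containment of super-level sets of the form $\{|f| > \mu^{1/q(x)}\} \subset \{|f| > \alpha\}$, but the same monotonicity input delivers the required inclusion. A minor technical point is that the identity $\varrho_{p(\cdot)}(f/\|f\|_{\vlp}) \le 1$ used above relies on the left-continuity axiom (iv) of a semimodular combined with the infimum definition of the norm, and the degenerate cases $\|f\|_{\vlp} = 0$ or $\||f|^{q(\cdot)}\|_{L^{p(\cdot)/q(\cdot)}(\rn)} = 0$ are trivial since they force $f = 0$ almost everywhere.
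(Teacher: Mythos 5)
Your proposal is correct and follows essentially the same route as the paper: both reduce each norm inequality to a semimodular inequality via Lemma \ref{l-modular} and Remark \ref{re-mixed}(ii), and then compare $\mu^{1/q(\cdot)}$ against $\mu^{1/q_-}$ or $\mu^{1/q_+}$ using the monotonicity of $t\mapsto a^t$ according to whether the base is $\le 1$ or $>1$. The paper phrases the comparison as pointwise domination inside $\varrho_{p(\cdot)}$ (which absorbs the $p(x)=\fz$ case automatically) rather than comparing explicit integrands, but this is only a presentational difference.
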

\begin{proof}
By similarity, we only prove (i) and (iii). Let $f\in\vlp$.
Then, by Remark \ref{r-modular} and the fact that
$\|\frac f{\|f\|_{\vlp}}\|_{\vlp}=1,$
 we see that
$\varrho_{p(\cdot)}(f/\|f\|_{\vlp})\le1$. Thus, if $\|f\|_{\vlp}\le1$, then
\begin{eqnarray*}
\varrho_{p(\cdot)}\lf(\frac{f}{[\|f\|_{\vlp}^{q_-}]^{1/q(\cdot)}}\r)
&&\le \varrho_{p(\cdot)}\lf(\frac{f}{[\|f\|_{\vlp}^{q_-}]^{1/q_-}}\r)
=\varrho_{p(\cdot)}\lf(\frac{f}{\|f\|_{\vlp}}\r)\le1,
\end{eqnarray*}
which implies that
$\||f|^{q(\cdot)}\|_{L^{\frac{p(\cdot)}{q(\cdot)}}(\rn)}\le \|f\|_{\vlp}^{q_-}$
and then completes the proof of (i).

For (iii), if
$\||f|^{q(\cdot)}\|_{L^{\frac{p(\cdot)}{q(\cdot)}}(\rn)}\ge1$, then, for all
$\lz>\||f|^{q(\cdot)}\|_{L^{\frac{p(\cdot)}{q(\cdot)}}(\rn)}$,
$$\varrho_{p(\cdot)}\lf(f/\lz^{1/q_-}\r)
\le\varrho_{p(\cdot)}\lf(f/\lz^{1/q(\cdot)}\r)\le1,$$
which implies that $\|f\|_{\vlp}\le \lz^{\frac1{q_-}}$.
By this and the arbitrariness of
$\lz>\||f|^{q(\cdot)}\|_{L^{\frac{p(\cdot)}{q(\cdot)}}(\rn)},$
we conclude that (iii) holds true, which completes the proof of
Lemma \ref{l-inf-norm}.
\end{proof}

For a sequence $t=\{t_Q\}_{Q\in\cq^\ast}\subset \cc$,
$r\in(0,\fz)$ and $\lz\in(0,\fz)$,
let $t_{r,\lz}^\ast:=\{(t_{r,\lz}^\ast)_Q\}_{Q\in\cq^\ast}$, where,
for all $Q\in\cq^\ast$,
$$(t_{r,\lz}^\ast)_Q:=\lf\{\sum_{R\in \cq^\ast,\, \ell(R)=\ell(Q)}
\frac{|t_R|^r}{[1+\{\ell(R)\}^{-1}|x_R-x_Q|]^\lz}\r\}^{\frac1r}.$$

\begin{lemma}\label{l-estimate1}
Let $p,\ q$, $s$ and $\phi$ be as in Definition \ref{def-b},
$r\in(0,\min\{p_-,q_-\})$
and $$\lz\in(2n+C_{\log}(s)+2r\log_2c_1,\fz),$$
where $C_{\log}(s)$ denotes the constant as in \eqref{ve1}
with $g$ replaced by $s$,
and $c_1$ is as in \eqref{phi-1}.
Then there exists a constant $C\in[1,\fz)$ such that,
for all $t\in b_{p(\cdot),q(\cdot)}^{s(\cdot),\phi}(\rn)$,
\begin{equation}\label{estimate5}
\|t\|_{b_{p(\cdot),q(\cdot)}^{s(\cdot),\phi}(\rn)}
\le \|t_{r,\lz}^\ast\|_{b_{p(\cdot),q(\cdot)}^{s(\cdot),\phi}(\rn)}
\le C\|t\|_{b_{p(\cdot),q(\cdot)}^{s(\cdot),\phi}(\rn)}.
\end{equation}
\end{lemma}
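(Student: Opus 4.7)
The lower bound $\|t\|_\beve \le \|t_{r,\lz}^\ast\|_\beve$ is immediate: taking $R=Q$ in the sum defining $(t_{r,\lz}^\ast)_Q$ gives $(t_{r,\lz}^\ast)_Q \ge |t_Q|$ for every $Q\in\cq^\ast$, so $t_{r,\lz}^\ast$ dominates $|t|$ entry-wise and hence in the $\beve$-norm. All the substance is in the upper bound, which I plan to obtain by adapting the Frazier--Jawerth maximal-function strategy (cf.\ \cite[Lemma 3.3]{fj90}) to the variable-exponent setting via the r-trick of \cite{dhr09}.

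Fix a dyadic cube $P\in\cq$ and $j\ge j_P\vee 0$, and let $T_j^P$ denote the inner sum from Definition \ref{d-be} applied to the sequence $t_{r,\lz}^\ast$. The core step is to rewrite $(t_{r,\lz}^\ast)_Q^r$ as a convolution: for $x\in Q$ with $\ell(Q)=2^{-j}$ and any $y\in R$ with $\ell(R)=2^{-j}$ the comparability $[1+2^j|x_R-x_Q|]\sim[1+2^j|x-y|]$ yields
\begin{equation*}
(t_{r,\lz}^\ast)_Q^r \ls \sum_{R:\,\ell(R)=2^{-j}}|t_R|^r|R|^{-1}\int_R\frac{dy}{[1+2^j|x-y|]^\lz}=2^{-jn}\,\eta_{j,\lz}\ast g_j(x),
\end{equation*}
where $g_j:=\sum_{R:\,\ell(R)=2^{-j}}|t_R|^r|R|^{-1}\chi_R$; this is the essence of the r-trick (Lemma \ref{l-r-trick}). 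Summing $|Q|^{-s(x)/n}(t_{r,\lz}^\ast)_Q\wz\chi_Q(x)$ over disjoint $Q\subset P$ with $\ell(Q)=2^{-j}$, raising to the $r$-th power, applying Lemma \ref{l-eta} to pass the factor $2^{jrs(x)}$ into the convolution, and using the elementary identity $2^{jrs(y)}g_j(y)=2^{jn(1-r/2)}G_j(y)^r$ (valid by the disjointness of the $R$'s), where $G_j:=\sum_{R:\,\ell(R)=2^{-j}}|R|^{-s(\cdot)/n}|t_R|\wz\chi_R$, I arrive at
\begin{equation*}
[T_j^P(x)]^r\ls\chi_P(x)\,\eta_{j,\lz-rC_{\log}(s)}\ast G_j^r(x).
\end{equation*}

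Now decompose $G_j=\sum_{l\in\zz^n}G_j^{P_l}$ along the dyadic partition $\{P_l\}_{l\in\zz^n}$ of $\rn$ into cubes of side length $\ell(P)$, with $P_0=P$ (each $R$ with $\ell(R)\le\ell(P)$ lies in a unique $P_l$, so $G_j^r=\sum_l(G_j^{P_l})^r$ by disjointness). For the bounded number of neighboring indices $|l|\le 1$, a direct application of Lemma \ref{l-conv-ineq} via Remark \ref{re-norm} (valid because $p_-/r,\,q_-/r\ge 1$), combined with $\phi(P_l)\sim\phi(P)$ from \eqref{phi-2} and the definition of $\|t\|_\beve$, contributes $\ls\phi(P)^r\|t\|_\beve^r$. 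For $|l|\ge 2$ the separation $|x-y|\gs|l|\ell(P)$ for $x\in P$ and $y\in\supp G_j^{P_l}\subset P_l$ permits the splitting $\eta_{j,\lz-rC_{\log}(s)}(x-y)\ls|l|^{-\lz_1}\eta_{j,\lz-rC_{\log}(s)-\lz_1}(x-y)$, extracting the decay $|l|^{-\lz_1}$; Lemma \ref{l-conv-ineq} on the reduced exponent then yields, after using Lemma \ref{l-esti-cube} to bound $\phi(P_l)\ls(1+|l|)^{2\log_2 c_1}\phi(P)$, the far-part series $\sum_{|l|\ge 2}|l|^{-\lz_1+2r\log_2 c_1}\phi(P)^r\|t\|_\beve^r$, which converges on $\zz^n$ provided $\lz_1>n+2r\log_2 c_1$. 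Assembling the near and far contributions, extracting the $r$-th root, and normalizing by $\phi(P)$ and taking the supremum over $P$ finishes the proof. The main obstacle is the simultaneous calibration of the three exponent constraints hidden in the single hypothesis $\lz>2n+C_{\log}(s)+2r\log_2 c_1$ — the shift $rC_{\log}(s)$ demanded by Lemma \ref{l-eta}, the threshold required by Lemma \ref{l-conv-ineq} on the reduced exponent, and the summability threshold $n+2r\log_2 c_1$ coming from Lemma \ref{l-esti-cube} — together with the secondary technical nuisance of passing between the quasi-normed $\ell^{q(\cdot)}(L^{p(\cdot)}(\rn))$ and its normed $\ell^{q(\cdot)/r}(L^{p(\cdot)/r}(\rn))$ counterpart via the $r$-power trick.
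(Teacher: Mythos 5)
Your overall architecture is the right one and matches the paper's: the trivial lower bound, the rewriting of $(t_{r,\lz}^\ast)_Q^r$ as $2^{-jn}\eta_{j,\lz}\ast g_j(x)$ for $x\in Q$, the use of Lemma \ref{l-eta} to absorb $2^{jrs(\cdot)}$ into the convolution, Lemma \ref{l-conv-ineq} via the $r$-power reduction of Remark \ref{re-norm} for the near part, and the extraction of polynomial decay in $|l|$ together with Lemma \ref{l-esti-cube} for the far part. (The paper splits $t=v^P+u^P$ according to $Q\subset 4P$ or not and indexes the far cubes by the translates $P+l\ell(P)$, which is the same decomposition as your partition into the cubes $P_l$.)

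There is, however, a genuine gap in your treatment of the far part. Your final step bounds
$\bigl\|\{\chi_P\sum_{|l|\ge2}\eta_{j,m}\ast(G_j^{P_l})^r\}_j\bigr\|_{\ell^{q(\cdot)/r}(L^{p(\cdot)/r}(\rn))}$
by the series $\sum_{|l|\ge2}|l|^{-\lz_1+2r\log_2c_1}[\phi(P)]^r\|t\|_{\beve}^r$, which implicitly uses a countable triangle inequality in the mixed space $\ell^{q(\cdot)/r}(L^{p(\cdot)/r}(\rn))$. But by Remark \ref{re-mixed}(iv) this space is in general only quasi-normed (the condition $\frac r{p(x)}+\frac r{q(x)}\le1$ is not implied by $r<\min\{p_-,q_-\}$, and $q$ is not constant), so summing infinitely many terms this way is not legitimate; this is precisely the point where the absence of a Fefferman--Stein-type structure bites. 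The paper's proof circumvents it by normalizing $\|t\|_{\beve}=1$ and descending to the semimodular: by Remark \ref{re-mixed}(i)--(ii) and Lemma \ref{l-inf-norm}, the desired estimate reduces to showing that a scalar sum over $j$ of quantities of the form $\|\cdot\|_{L^{p(\cdot)}(\rn)}^{q_-}$ (equivalently $\|\cdot\|_{L^{p(\cdot)/r}(\rn)}^{q_-/r}$) is bounded by $1$; since $(p(\cdot)/r)_->1$ makes $L^{p(\cdot)/r}(\rn)$ a Banach space, the sum over $l$ can then be carried out \emph{inside} a single $L^{p(\cdot)/r}$-norm for each fixed $j$, where the genuine triangle inequality of Remark \ref{re-vlp}(i) and the normalization \eqref{estimate3} apply. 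Your proof needs this (or an equivalent) device; without it the assembly of the far contributions does not go through. A secondary remark: your constant bookkeeping should use $rC_{\log}(s)$ (for the weight $2^{jrs(\cdot)}$) and must also leave room for the threshold $n+C_{\log}(r/q)$ of Lemma \ref{l-conv-ineq}, so the three constraints you list are four; but this is a matter of calibration rather than of substance.
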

\begin{proof}
To prove this lemma, it suffices to show the second inequality of
\eqref{estimate5} since the first one holds true obviously.
We first claim that, for all
$t\in b_{p(\cdot),q(\cdot)}^{s(\cdot),1}(\rn)$,
$\|t_{r,\lz}^\ast\|_{b_{p(\cdot),q(\cdot)}^{s(\cdot),1}(\rn)}
\ls\|t\|_{b_{p(\cdot),q(\cdot)}^{s(\cdot),1}(\rn)}$.
Indeed, observe that, for all $r\in(0,\min\{p_-,q_-\})$,
$Q\in\cq^\ast$ and $x\in Q$,
\begin{equation*}
(t_{r,\lz}^\ast)_Q\sim\lf[\eta_{j_Q,\lz}\ast
\lf(\sum_{R\in\cq^\ast,\,\ell(R)=2^{-{j_Q}}}|t_R|^r\chi_R\r)(x)\r]^\frac1r.
\end{equation*}
Thus, by Lemma \ref{l-eta}, Remark \ref{r-defi}(iv) and Lemma \ref{l-conv-ineq},
 we see that
\begin{eqnarray*}
\lf\|t_{r,\lz}^\ast\r\|_{b_{p(\cdot),q(\cdot)}^{s(\cdot),1}(\rn)}
&&\ls\lf\|\lf\{\eta_{j,\lz-C_{\log}(s)}\ast\lf(\lf[2^{js(\cdot)}
\sum_{R\in\cq^\ast,\,\ell(R)=2^{-j}}|t_R|\wz\chi_R\r]^r\r)\r\}_{j\in\zz_+}
\r\|_{\ell^{\frac{q(\cdot)}r}(L^{\frac{p(\cdot)}r}(\rn))}^\frac1r\\
&&\ls\lf\|\lf\{2^{js(\cdot)}\sum_{R\in\cq^\ast,\,\ell(R)=2^{-j}}
|t_R|\wz\chi_R\r\}_{j\in\zz_+}\r\|_{\ell^{q(\cdot)}(L^{p(\cdot)}(\rn))}
\sim\|t\|_{b_{p(\cdot),q(\cdot)}^{s(\cdot),1}(\rn)},
\end{eqnarray*}
which proves the above claim.

For all $P\in\cq$ and $Q\in\cq^\ast$, let
$v_{Q}^P:=t_Q$ if $Q\subset 4P$ and $v_{Q}^P:=0$ otherwise, and let
$u_{Q}^P:=t_Q-v_{Q}^P$.
Let $v^P:=\{v_{Q}^P\}_{Q\in\cq^\ast}$ and $u^P:=\{u_{Q}^P\}_{Q\in\cq^\ast}$.
Then, we have
\begin{eqnarray}\label{estimate6}
\qquad\lf\|t_{r,\lz}^\ast\r\|_{\beve}
&&\le\sup_{P\in\cq}\lf\{\frac1{\phi(P)}\lf\|
\lf\{\sum_{\gfz{Q\in\cq^\ast,Q\subset P}{\ell(Q)=2^{-j}}}
|Q|^{-\frac{s(\cdot)}{n}}|((v^P)_{r,\lz}^\ast)_Q|\wz\chi_Q\r\}_{j\ge(j_P\vee0)}
\r\|_{\ell^{q(\cdot)}(L^{p(\cdot)}(P))}\r.\\
&&\hs+\lf.\frac1{\phi(P)}\lf\|
\lf\{\sum_{\gfz{Q\in\cq^\ast,Q\subset P}{\ell(Q)=2^{-j}}}
|Q|^{-\frac{s(\cdot)}{n}}|((u^P)_{r,\lz}^\ast)_Q|\wz\chi_Q\r\}_{j\ge(j_P\vee0)}
\r\|_{\ell^{q(\cdot)}(L^{p(\cdot)}(P))}\r\}\noz\\
&&=:\sup_{P\in\cq}\lf({\rm I}_{P,1}+{\rm I}_{P,2}\r).\noz
\end{eqnarray}
By the above claim, \eqref{phi-1} and Remark \ref{r-lattice}, we find that
\begin{eqnarray}\label{estimate7}
{\rm I}_{P,1}
&&\le\frac1{\phi(P)}\lf\|(v^P)_{r,\lz}^\ast\r\|_{b_{p(\cdot),q(\cdot)}
^{s(\cdot),1}(\rn)}
\ls\frac1{\phi(P)}\lf\|v^P\r\|_{b_{p(\cdot),q(\cdot)}
^{s(\cdot),1}(\rn)}\\
&&\ls\frac1{\phi(4P)}
\lf\|\lf\{\sum_{\gfz{Q\in\cq^\ast,Q\subset 4P}{\ell(Q)=2^{-j}}}
|Q|^{-\frac{s(\cdot)}{n}}|t_Q|\wz\chi_Q\r\}_{j\ge(j_{4P}\vee0)}
\r\|_{\ell{q(\cdot)}(L^{P(\cdot)}(4P))}\ls\|t\|_{\beve}.\noz
\end{eqnarray}

To estimate ${\rm I}_{P,2}$, we only consider the case that $q_+\in(0,\fz)$,
since the proof of the case that $q_+=\fz$
is similar, the details being omitted.
Without loss of generality, we may assume that $\|t\|_{\beve} =1$ and prove that
${\rm I}_{P,2}\ls1$.
To this end, it suffices to show that
\begin{equation*}
\lf\|\lf\{\sum_{\gfz{Q\in\cq^\ast,\,Q\subset P}{\ell(Q)=2^{-j}}}
\frac{\chi_P}{\phi(P)}|Q|^{-\frac{s(\cdot)}n}((u^P)_{r,\lz}^\ast)_Q\wz \chi_Q\r\}
_{j\ge(j_P\vee 0)}\r\|_{\ell^{q(\cdot)}(\vlp)}\ls1.
\end{equation*}
By \eqref{mixed-x}, and (i) and (ii) of Remark \ref{re-mixed},
we see that the above inequality is equivalent to that there exists some large
positive constant $C_0$ such that
\begin{equation*}
\sum_{j=(j_P\vee0)}^\fz
\lf\|\lf[\sum_{\gfz{Q\in\cq^\ast,\,Q\subset P}{\ell(Q)=2^{-j}}}
\frac{\chi_P}{C_0\phi(P)}|Q|^{-\frac{s(\cdot)}n}((u^P)_{r,\lz}^\ast)_Q\wz \chi_Q
\r]^{q(\cdot)}\r\|_{L^{\frac{p(\cdot)}{q(\cdot)}}(\rn)}\le1,
\end{equation*}
which, by Lemma \ref{l-inf-norm}(i), is a consequence of
\begin{equation}\label{estimate4}
{\rm J}_P:=\sum_{j=(j_P\vee0)}^\fz\lf\|\sum_{\gfz{Q\in\cq^\ast,\,Q\subset P}
{\ell(Q)=2^{-j}}}
\frac{\chi_P}{C_0\phi(P)}|Q|^{-\frac{s(\cdot)}n}((u^P)_{r,\lz}^\ast)_Q
\wz \chi_Q\r\|_{\vlp}^{q_-}\le1.
\end{equation}

Now we show \eqref{estimate4}.
Since $\|t\|_{\beve} =1$, it follows that, for all $\wz P\in\cq$,
\begin{equation*}
\lf\|\lf\{\sum_{\gfz{Q\in\cq^\ast,Q\subset \wz P}{\ell(Q)=2^{-j}}}
[\phi(\wz P)]^{-1}\chi_{\wz P}|Q|^{-\frac{s(\cdot)}n}|t_{Q}|
\wz \chi_{Q}\r\}_{j\ge(j_{\wz P}\vee 0)}\r\|_{\ell^{q(\cdot)}(L^{p(\cdot)}(\wz P))}\le1,
\end{equation*}
which, together with \eqref{mixed-x}, and (i) and (ii) of Remark \ref{re-mixed},
implies that
\begin{equation*}
\sum_{j=(j_{\wz P}\vee0)}^\fz
\lf\|\lf[\sum_{\gfz{Q\in\cq^\ast,\,Q\subset \wz P}{\ell(Q)=2^{-j}}}
[\phi(\wz P)]^{-1}\chi_{\wz P}|Q|^{-\frac{s(\cdot)}n}|t_Q|\wz \chi_Q
\r]^{q(\cdot)}\r\|_{L^{\frac{p(\cdot)}{q(\cdot)}}(\rn)}\le1.
\end{equation*}
From this, and (iii) and (iv) of Lemma \ref{l-inf-norm}, we deduce that,
for all $\wz P\in\cq$ and $j\ge(j_{\wz P}\vee0)$,
\begin{equation}\label{estimate3}
\lf\|\sum_{\gfz{Q\in\cq^\ast,\,Q\subset \wz P}{\ell(Q)=2^{-j}}}
[\phi(\wz P)]^{-1}|Q|^{-\frac{s(\cdot)}n}|t_Q|\wz \chi_Q
\r\|_{L^{p(\cdot)}(\wz P)}\le1.
\end{equation}

For the given $P\in\cq$, $i\in\zz_+$ and $l\in\zz^n$, let
$$A(i,l,P):=\lf\{R\in\cq^\ast:\ \ell(R)=2^{-i}\ell(P),\
R\subset P+l\ell(P)\r\}.$$
Then we see that
\begin{eqnarray*}
\wz {\rm J}_{ P}
:=&&\sum_{j=(j_P\vee0)}^\fz\lf\|\sum_{\gfz{Q\in\cq^\ast,\,Q\subset P}
{\ell(Q)=2^{-j}}}\chi_P[\phi(P)]^{-1}|Q|^{-\frac{s(\cdot)}{n}}
(u_{r,\lz}^\ast)_Q\wz\chi_Q\r\|_{\vlp}^{q_-}\\
\ls&&\sum_{i=0}^\fz\lf\|\sum_{\gfz{Q\in\cq^\ast,\,Q\subset P}
{\ell(Q)=2^{-i}\ell(P)}}\frac{\chi_P}{\phi(P)}|Q|^{-[\frac{s(\cdot)}n+\frac12]}\r.\\
&&\hs\hs\times\lf.\lf[\sum_{l\in\zz^n,\,|l|\ge2}\sum_{R\in A(i,l,P)}
\frac{|u_R|^r}{(1+\{\ell(Q)\}^{-1}|x_R-x_Q|)^\lz}\r]^\frac1r\chi_Q\r\|_{\vlp}^{q_-}.
\end{eqnarray*}
Notice that, for all $i\in\zz_+$, $l\in\zz^n$ and $x\in Q\in\cq^\ast$
with $\ell(Q)=2^{-i}\ell(P)$,
\begin{equation*}
\sum_{R\in A(i,l,P)}\frac{|u_R|^r}{[1+\{\ell(Q)\}^{-1}|x_R-x_Q|]^m}
\sim\eta_{j_Q,m}\ast\lf(\lf[\sum_{R\in A(i,l,P)}|u_R|\chi_R\r]^r\r)(x),
\end{equation*}
where $m\in(n+C_{\log}(s),\fz)$ is chosen such that
$\lz>m+n+2r\log_2c_1$.
Notice that, when $|l|\ge2$, $1+\{\ell(Q)\}^{-1}|x_R-x_Q|\sim2^i|l|$.
Thus, by Lemma \ref{l-conv-ineq}, we know that
\begin{eqnarray*}
\wz{\rm J}_{ P}
&&\ls\sum_{i=0}^\fz\lf\|\sum_{\gfz{l\in\zz^n}{|l|\ge2}}
\frac{(2^i|l|)^{m-\lz}}{[\phi(P)]^{r}}\eta_{i+j_P,m-C_{\log}(s)}
\ast\lf(\lf[\sum_{R\in A(i,l,P)}
|R|^{-\frac{s(\cdot)}nr}|u_R|\wz\chi_R\r]^r\r)\r\|
_{L^{\frac{p(\cdot)}{r}}(\rn)}^{\frac{q_-}r}\\
&&\ls\sum_{i=0}^\fz\lf\{\sum_{\gfz{l\in\zz^n}{|l|\ge2}}(2^i|l|)^{m-\lz}
\lf[\frac{\phi(P+l\ell(P))}{\phi(P)}\r]^r\lf\|\sum_{R\in A(i,l,P)}\frac{\chi_{P+l\ell(P)}}
{\phi(P+l\ell(P))}|R|^{-\frac{s(\cdot)}{n}}|t_R|\wz \chi_R\r\|_{\vlp}^r
\r\}^{\frac{q_-}{r}},
\end{eqnarray*}
which, combined with \eqref{estimate3} and Lemma \ref{l-esti-cube},
implies that
\begin{eqnarray*}
\wz{\rm J}_{ P}
\ls\sum_{i=0}^\fz\lf\{\sum_{l\in\zz^n,\,|l|\ge2}
2^{i(m-\lz)}|l|^{m+2r\log_2c_1-\lz}\r\}^{\frac{q_-}r}\sim1.
\end{eqnarray*}
Therefore, there exists a positive constant $C_0$ large enough such that
\eqref{estimate4} holds true for all $P\in\cq$ and hence
\begin{equation}\label{estimate1}
{\rm I}_{P,2}\ls\|t\|_{\beve}.
\end{equation}

Combining \eqref{estimate6}, \eqref{estimate7} and \eqref{estimate1},
we conclude that
$$\|t_{r,\lz}^\ast\|_{\beve}\le\sup_{P\in\cq}\lf({\rm I}_{P,1}+{\rm I}_{P,2}\r)
\ls\|t\|_{\beve},$$
which completes the proof of Lemma \ref{l-estimate1}.
\end{proof}

Now we come to prove the main result of this section.
\begin{proof}[Proof of Theorem \ref{t-transform}]
We first show that $S_\vz$ is bounded from $\bbeve$ to $\beve$.
Let $f\in\bbeve$, $r\in(0,\frac12\min\{p_-,q_-,2\})$ and
$m\in(n+C_{\log}(s)+C_{\log}(r/q)+\log_2c_1,\fz)$.
Then, by Lemma \ref{l-r-trick}, we see that,
for all $Q_{jk}\in\cq^\ast$ and $x\in Q_{jk}$,
\begin{equation*}
|\vz_j\ast f(x_{Q_{jk}})|^r
\ls2^{jn}\sum_{l\in\zz^n}\int_{Q_{j(k+l)}}
\frac{|\vz_j\ast f(y)|^r}{(1+2^j|x-y|)^{4m}}\,dy,
\end{equation*}
which, together with the fact that $1+2^j|x-y|\sim1+|l|$ when $x\in Q_{jk}$ and
 $y\in Q_{j(k+l)}$, implies that
\begin{equation*}
|\vz_j\ast f(x_{Q_{jk}})|
\ls\lf[\sum_{l\in\zz^n}(1+|l|)^{-m}\eta_{j,3m}\ast|(\vz_j\ast
f)\chi_{Q_{j(k+l)}}|^r(x)\r]^\frac1r.
\end{equation*}
From this, Lemma \ref{l-eta} and Remark \ref{re-mixed}(iv), we deduce that
\begin{eqnarray*}
&&\|S_\vz(f)\|_{\beve}\\
&&\hs\ls\sup_{P\in\cq}\frac1{\phi(P)}
\lf\|\lf\{\sum_{k\in\zz^n}
\lf[\sum_{l\in\zz^n}\dfrac{2^{jrs(\cdot)}}{(1+|l|)^m}\eta_{j,3m}\ast|(\vz_j\ast
f)\chi_{3n|l|P}|^r\r]^\frac1r\chi_{Q_{jk}}\r\}_{j\ge(j_P\vee0)}
\r\|_{\ell^{q(\cdot)}(L^{p(\cdot)}(P))}\\
&&\hs\ls\sup_{P\in\cq}\frac1{\phi(P)}
\lf\|\lf\{\sum_{l\in\zz^n}\frac{2^{jrs(\cdot)}}{(1+|l|)^{m}}
\eta_{j,3m}\ast|(\vz_j\ast f)\chi_{3n|l|P}|^r\r\}_{j\ge(j_P\vee0)}
\r\|_{\ell^{\frac{q(\cdot)}r}(L^{\frac{p(\cdot)}r}(P))}^{\frac1r}\\
&&\hs\ls\lf[\sum_{l\in\zz^n}(1+|l|)^{-m}\sup_{P\in\cq}\frac1{\{\phi(P)\}^r}\lf\|\lf\{\eta_{j,2m}\ast|2^{js(\cdot)}
(\vz_j\ast f)\chi_{3n|l|P}|^r
\r\}_{j\ge(j_P\vee0)}\r\|_{\ell^{\frac{q(\cdot)}r}(L^{\frac{p(\cdot)}r}(P))}
\r]^\frac1r,
\end{eqnarray*}
which, combined with Lemmas \ref{l-conv-ineq} and \ref{l-esti-cube}, implies that
\begin{eqnarray*}
\|S_\vz(f)\|_{\beve}&&\ls\lf[\sum_{l\in\zz^n}(1+|l|)^{-m}\sup_{P\in\cq}\frac1{\phi(P)^r}
\lf\|\lf\{2^{js(\cdot)}|\vz_j\ast f|\r\}_{j\ge(j_P\vee0)}
\r\|_{\ell^{q(\cdot)}(L^{p(\cdot)}
({3n|l|P}))}^r\r]^\frac1r\\
&&\ls\|f\|_{\bbeve}\lf\{\sum_{l\in\zz^n}(1+|l|)^{-m}
(1+|l|)^{r\log_2c_1}\r\}^\frac1r
\ls\|f\|_{\bbeve}.
\end{eqnarray*}
Therefore, $S_\vz$ is bounded from $\bbeve$ to $\beve$.

The boundedness of $T_\psi$ from $\beve$ to $\bbeve$
is deduced from an argument similar to that used in the proof of \cite[Theorem 2.1]{ysiy}.
Indeed, by repeating the argument used in the
proof of \cite[Theorem 2.1]{ysiy},
with \cite[Lemmas 2.7 and 2.8]{ysiy} therein replaced by
Lemmas \ref{l-welld} and \ref{l-estimate1}, we conclude that
$T_\psi$ is bounded from $\beve$ to $\bbeve$, the details being omitted.
Finally, by the Calder\'on reproducing formula
(see, for example, \cite[Lemma 2.3]{ysiy}),
we know that $T_\psi\circ S_\vz$ is the
identity on $\bbeve$, which completes the proof of Theorem \ref{t-transform}.
\end{proof}

\section{Embeddings\label{s4}}

\hskip\parindent
In this section, we prove some basic properties and embeddings between $\bbeve$
and $\btlve$. Recall that the \emph{Triebel-Lizorkin-type space with variable exponents},
$\btlve$, is defined to be the set of all
$f\in\cs'(\rn)$ such that
$$\|f\|_{F_{p(\cdot),q(\cdot)}^{s(\cdot),\phi}(\rn)}
:=\sup_{P\in\cq}\frac1{\phi(P)}
\lf\|\lf\{\sum_{j=\max\{j_P,0\}}^\fz\lf[2^{js(\cdot)}|\vz_j\ast f(\cdot)|\r]
^{q(\cdot)}\r\}^{\frac1{q(\cdot)}}\r\|_{L^{p(\cdot)}(P)}<\fz,$$
where $\vz_0$ is replaced by $\Phi$ and the supremum is taken over all dyadic
cubes $P$ in $\rn$, which was introduced in \cite{yyz14}.

\begin{proposition}\label{p-embed1}
Let $\phi\in\cg(\urn)$,
$s,\ s_0,\ s_1\in C_{\rm loc}^{\log}(\rn)\cap L^\fz(\rn)$ and
$p$, $q,\ q_0,\ q_1\in C^{\log}(\rn)$.

{\rm(i)} If $q_0\le q_1$, then
$B_{p(\cdot),q_0(\cdot)}^{s(\cdot),\phi}(\rn)
\hookrightarrow B_{p(\cdot),q_1(\cdot)}^{s(\cdot),\phi}(\rn).$

{\rm(ii)} If $(s_0-s_1)_->0$, then
$B_{p(\cdot),q_0(\cdot)}^{s_0(\cdot),\phi}(\rn)
\hookrightarrow B_{p(\cdot),q_1(\cdot)}^{s_1(\cdot),\phi}(\rn).$

{\rm(iii)} If $p_+,\ q_+\in(0,\fz)$, then
$$B_{p(\cdot),\min\{p(\cdot),\,q(\cdot)\}}^{s(\cdot),\phi}(\rn)
\hookrightarrow F_{p(\cdot),q(\cdot)}^{s(\cdot),\phi}(\rn)
\hookrightarrow
B_{p(\cdot),\max\{p(\cdot),\,q(\cdot)\}}^{s(\cdot),\phi}(\rn).$$
In particular, if $p_+\in(0,\fz)$, then
$B_{p(\cdot),p(\cdot)}^{s(\cdot),\phi}(\rn)
=F_{p(\cdot),p(\cdot)}^{s(\cdot),\phi}(\rn).$
\end{proposition}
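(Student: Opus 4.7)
All three embeddings reduce to purely functional comparison inequalities between the mixed quasi-norms $\|\cdot\|_{\ell^{q(\cdot)}(L^{p(\cdot)}(P))}$ and $\|\cdot\|_{L^{p(\cdot)}(P)(\ell^{q(\cdot)})}$, applied to the sequence $\{2^{js(\cdot)}|\vz_j\ast f|\}_{j\ge j_P\vee 0}$; dividing by $\phi(P)$ and taking the supremum over dyadic cubes $P\in\cq$ then yields the claimed Besov-type embeddings. The principal tools will be the unit-ball property (Lemma \ref{l-modular}), Remark \ref{re-mixed}, and Lemma \ref{l-inf-norm}, which passes between $\|f\|_{\vlp}$ and $\||f|^{q(\cdot)}\|_{L^{p(\cdot)/q(\cdot)}(\rn)}$.

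For (i), I would write, for $i\in\{0,1\}$, $\varrho_{\ell^{q_i(\cdot)}(L^{p(\cdot)})}(\{f_j\})=\sum_j\mu_j^{(i)}$ with $\mu_j^{(i)}:=\inf\{\mu>0:\varrho_{p(\cdot)}(f_j/\mu^{1/q_i(\cdot)})\le1\}$. If $\|\{f_j\}\|_{\ell^{q_0(\cdot)}(L^{p(\cdot)})}\le1$, then by Lemma \ref{l-modular} each $\mu_j^{(0)}\le1$; combined with the elementary inequality $\mu^{1/q_0(x)}\le\mu^{1/q_1(x)}$ for $\mu\in(0,1]$ and $q_0(x)\le q_1(x)$, and the monotonicity of $\varrho_{p(\cdot)}$ in its argument, this yields $\mu_j^{(1)}\le\mu_j^{(0)}$, hence $\|\{f_j\}\|_{\ell^{q_1(\cdot)}(L^{p(\cdot)})}\le\|\{f_j\}\|_{\ell^{q_0(\cdot)}(L^{p(\cdot)})}$ and (i).

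For (ii), my plan is to factor through the case $q_0\equiv\fz$. By (i) applied with $q_1\equiv\fz$, it suffices to prove $B_{p(\cdot),\fz}^{s_0(\cdot),\phi}(\rn)\hookrightarrow B_{p(\cdot),q_1(\cdot)}^{s_1(\cdot),\phi}(\rn)$, and by Remark \ref{r-defi}(iii) the left-hand norm is the simpler $\sup_j$-form. Setting $\ez:=(s_0-s_1)_->0$, the pointwise bound $2^{js_1(\cdot)}|\vz_j\ast f|\le 2^{-j\ez}\cdot 2^{js_0(\cdot)}|\vz_j\ast f|$ gives $\|2^{js_1(\cdot)}|\vz_j\ast f|\|_{L^{p(\cdot)}(P)}\le 2^{-j\ez}\phi(P)\|f\|_{B_{p(\cdot),\fz}^{s_0(\cdot),\phi}(\rn)}$ for every $P\in\cq$ and $j\ge j_P\vee0$. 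Rescaling by a large multiple of $\phi(P)\|f\|_{B_{p(\cdot),\fz}^{s_0(\cdot),\phi}(\rn)}$ and applying Lemma \ref{l-inf-norm}(i) turns each summand into an $L^{p(\cdot)/q_1(\cdot)}$-modular bound of the form $C2^{-j\ez q_{1,-}}$; since $q_{1,-}>0$, the geometric series $\sum_j 2^{-j\ez q_{1,-}}$ converges, and Lemma \ref{l-modular} converts the resulting modular bound into the desired $\ell^{q_1(\cdot)}(L^{p(\cdot)}(P))$-quasi-norm estimate.

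For (iii), I would establish the two functional embeddings
\begin{equation*}
\ell^{\min\{p(\cdot),q(\cdot)\}}(L^{p(\cdot)}(P))\hookrightarrow L^{p(\cdot)}(P)(\ell^{q(\cdot)})\hookrightarrow\ell^{\max\{p(\cdot),q(\cdot)\}}(L^{p(\cdot)}(P))
\end{equation*}
with constants uniform in $P\in\cq$. Writing $r(\cdot):=\min\{p,q\}$ and $R(\cdot):=\max\{p,q\}$, the scalar monotonicity $(\sum_j a_j^\alpha)^{1/\alpha}\le(\sum_j a_j^\beta)^{1/\beta}$ for $0<\beta\le\alpha$ and $a_j\ge0$, applied pointwise with $(\alpha,\beta)=(q(x),r(x))$ and with $(\alpha,\beta)=(R(x),q(x))$, reduces the first embedding to bounding $\|(\sum_j h_j^{r(\cdot)})^{1/r(\cdot)}\|_{L^{p(\cdot)}(P)}$ by $\|\{h_j\}\|_{\ell^{r(\cdot)}(L^{p(\cdot)}(P))}$ and the second to the reverse bound with $R$ in place of $r$. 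Because $p(\cdot)/r(\cdot)\ge1$ the first then follows from the usual triangle inequality in $L^{p(\cdot)/r(\cdot)}$ combined with Lemma \ref{l-inf-norm}(iv); because $p(\cdot)/R(\cdot)\le1$ the second follows from the \emph{reverse Minkowski inequality} of Remark \ref{re-vlp}(iii) in $L^{p(\cdot)/R(\cdot)}$ combined with Lemma \ref{l-inf-norm}(i). Specializing $q(\cdot)=p(\cdot)$ yields $B_{p(\cdot),p(\cdot)}^{s(\cdot),\phi}(\rn)=F_{p(\cdot),p(\cdot)}^{s(\cdot),\phi}(\rn)$. I anticipate (iii) to be the main obstacle, specifically because the ``boundary'' behaviour of the exponent $p(\cdot)/R(\cdot)\le1$ forces the use of the reverse Minkowski inequality rather than any classical Banach-space argument.
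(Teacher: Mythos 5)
Your proposal is correct and, for part (iii) --- the only part the paper actually proves in detail (it refers (i) and (ii) to the analogous Theorem 6.1 of Almeida--H\"ast\"o) --- it follows essentially the same route: pointwise monotonicity of the $\ell^{\beta}$-scales in the exponent, the triangle inequality in $L^{p(\cdot)/r(\cdot)}$ (with $r=\min\{p,q\}$, so $p/r\ge1$) for the first embedding, and the reverse Minkowski inequality in $L^{p(\cdot)/\alpha(\cdot)}$ (with $\alpha=\max\{p,q\}$, so $p/\alpha\le1$) for the second, the only cosmetic difference being that you pass between modulars and norms via Lemma \ref{l-inf-norm} where the paper invokes the unit-ball property directly. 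Your sketches of (i) and (ii) are the standard modular arguments and are sound.
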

\begin{proof}
The proof of this proposition is similar to that of
\cite[Theorem 6.1]{ah10} and we only give the proof of (iii).
Let $f_j(x):=2^{js(x)}|\vz_j\ast f(x)|$ for all $x\in\rn$,
$f\in\cs'(\rn)$ and $j\in\zz_+$. To prove the first embedding of (iii),
we let $r(\cdot):=\min\{p(\cdot),\,q(\cdot)\}$ and
$f\in B_{p(\cdot),r(\cdot)}^{s(\cdot),\phi}(\rn)$. Without loss of generality,
we may assume that $\|f\|_{B_{p(\cdot),r(\cdot)}^{s(\cdot),\phi}(\rn)}=1$
and prove that
$\|f\|_{F_{p(\cdot),q(\cdot)}^{s(\cdot),\phi}(\rn)}\ls1$.
Obviously, for all $P\in\cq$,
\begin{equation*}
\lf\|\lf\{[\phi(P)]^{-1}\chi_Pf_j\r\}_{j\ge(j_P\vee0)}
\r\|_{\ell^{r(\cdot)}(\vlp)}\le1,
\end{equation*}
which, together with \eqref{mixed-x}, Remarks \ref{re-vlp}(i) and \ref{re-mixed}(i),
implies that
\begin{equation*}
\lf\|\sum_{j=(j_P\vee0)}^\fz\lf[[\phi(P)]^{-1}\chi_Pf_j\r]^{r(\cdot)}
\r\|_{L^{\frac{p(\cdot)}{r(\cdot)}}(\rn)}
\le\sum_{j=(j_P\vee0)}^\fz\lf\|\lf\{[\phi(P)]^{-1}\chi_Pf_j\r\}^{r(\cdot)}
\r\|_{L^{\frac{p(\cdot)}{r(\cdot)}}(\rn)}\le1.
\end{equation*}
Then, by Remark \ref{r-modular} and the fact that, for all $d\in(0,1]$ and
$\{a_j\}_{j\in\nn}\subset\cc$,
\begin{equation}\label{simple-ineq}
\lf(\sum_{j\in\nn}|a_j|\r)^d\le\sum_{j\in\nn}|a_j|^d,
\end{equation}
we find that, for all $P\in\cq$,
\begin{eqnarray*}
&&\varrho_{p(\cdot)}\lf(\lf[\sum_{j=(j_P\vee0)}^\fz\lf\{[\phi(P)]^{-1}
\chi_Pf_j\r\}^{q(\cdot)}\r]^{\frac1{q(\cdot)}}\r)
\le\varrho_{\frac{p(\cdot)}{r(\cdot)}}
\lf(\sum_{j=(j_P\vee0)}^\fz\lf\{[\phi(P)]^{-1}
\chi_Pf_j\r\}^{r(\cdot)}\r)\le1,
\end{eqnarray*}
which implies that
\begin{eqnarray*}
\frac1{\phi(P)}\lf\|\lf\{\sum_{j=(j_P\vee0)}^\fz
\lf[2^{js(\cdot)}|\vz_j\ast f|\r]^{q(\cdot)}\r\}^{\frac1{q(\cdot)}}
\r\|_{L^{p(\cdot)}(P)}\le1.
\end{eqnarray*}
 Therefore, $\|f\|_{\btlve}\le1$, which completes the proof
of the first embedding of (iii).

For the second embedding of (iii), let $f\in \btlve$ and
$\alpha(\cdot):=\max\{p(\cdot),\,q(\cdot)\}$.
Without loss of generality, we may assume that $\|f\|_{\btlve}=1$ and show that
$\|f\|_{B_{p(\cdot),\alpha(\cdot)}^{s(\cdot),\phi}(\rn)}\ls1$.
Since $\|f\|_{\btlve}=1$, we know that, for all $P\in\cq$,
\begin{equation*}
\lf\|\lf\{\sum_{j=(j_P\vee0)}^\fz\lf([\phi(P)]^{-1}\chi_Pf_j\r)^{q(\cdot)}
\r\}^{\frac1{q(\cdot)}}\r\|_{\vlp}\le1,
\end{equation*}
which, combined with \eqref{simple-ineq} and Remark \ref{r-modular},
 implies that, for all $P\in\cq$,
\begin{eqnarray*}
&&\varrho_{\frac{p(\cdot)}{\alpha(\cdot)}}
\lf(\sum_{j=(j_P\vee0)}^\fz\lf\{[\phi(P)]^{-1}
\chi_Pf_j\r\}^{\alpha(\cdot)}\r)
\le\varrho_{p(\cdot)}\lf(\lf[\sum_{j=(j_P\vee0)}^\fz\lf\{[\phi(P)]^{-1}
\chi_Pf_j\r\}^{q(\cdot)}\r]^{\frac1{q(\cdot)}}\r)\le1.
\end{eqnarray*}
From this, Remark \ref{re-mixed}(ii) and Remark \ref{re-vlp}(iv), we deduce that
\begin{eqnarray*}
\varrho_{\ell^{\alpha(\cdot)}(L^{p(\cdot)})}
\lf(\lf\{[\phi(P)]^{-1}\chi_Pf_j\r\}_{j\ge(j_P\vee 0)}\r)
&&=\sum_{j=(j_P\vee0)}^\fz\lf\|\lf([\phi(P)]^{-1}\chi_Pf_j\r)^{\alpha(\cdot)}
\r\|_{L^{\frac{p(\cdot)}{\alpha(\cdot)}}(\rn)}\\
&&\le\lf\|\sum_{j=(j_P\vee0)}^\fz\lf([\phi(P)]^{-1}\chi_Pf_j\r)^{\alpha(\cdot)}
\r\|_{L^{\frac{p(\cdot)}{\alpha(\cdot)}}(\rn)}\le1,
\end{eqnarray*}
which implies that $\|f\|_{B_{p(\cdot),\alpha(\cdot)}^{s(\cdot),\phi}(\rn)}\le1$
and hence completes the proof of Lemma \ref{p-embed1}.
\end{proof}

The Sobolev-type embedding of $B_{p(\cdot),q(\cdot)}^{s(\cdot)}(\rn)$
(see \cite[Theorem 6.4]{ah10}) shows that it is reasonable and necessary to
consider the Besov spaces with variable smoothness and integrability.
For $\bbeve$, we also have the following Sobolev-type embeddings.

\begin{proposition}\label{p-se-embed}
Let $\phi\in\cg(\urn)$,
$s_0,\ s_1\in  C_{\rm loc}^{\log}(\rn)\cap L^\fz(\rn)$,
$p_0,\ p_1\in C^{\log}(\rn)$ satisfy that, for all $x\in\rn$,
$s_1(x)\le s_0(x)$ and
$s_0(x)-\frac n{p_0(x)}=s_1(x)-\frac n{p_1(x)}.$
Then
\begin{equation}\label{se-embed}
b_{p_0(\cdot),\fz}^{s_0(\cdot),\phi}(\rn)\hookrightarrow
b_{p_1(\cdot),\fz}^{s_1(\cdot),\phi}(\rn);
\end{equation}
 moreover,
$B_{p_0(\cdot),\fz}^{s_0(\cdot),\phi}(\rn)\hookrightarrow
B_{p_1(\cdot),\fz}^{s_1(\cdot),\phi}(\rn)$.
\end{proposition}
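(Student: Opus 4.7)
The plan is to establish the sequence-level embedding \eqref{se-embed} first and then transfer it to the function-space level via the $\vz$-transform characterization of Theorem \ref{t-transform}. Once \eqref{se-embed} is in hand, for $f\in B_{p_0(\cdot),\fz}^{s_0(\cdot),\phi}(\rn)$ one has $S_\vz f\in b_{p_0(\cdot),\fz}^{s_0(\cdot),\phi}(\rn)\hookrightarrow b_{p_1(\cdot),\fz}^{s_1(\cdot),\phi}(\rn)$; since $T_\psi\circ S_\vz$ is the identity on $B_{p_0(\cdot),\fz}^{s_0(\cdot),\phi}(\rn)$, applying $T_\psi$ yields $f=T_\psi\circ S_\vz f\in B_{p_1(\cdot),\fz}^{s_1(\cdot),\phi}(\rn)$ with the correct quasi-norm control.

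For \eqref{se-embed}, since both targets have $q=\fz$, the mixed quasi-norm collapses (by Remark \ref{re-mixed}(v) and the evident sequence-space analogue of Remark \ref{r-defi}(iii)) to a supremum over $j$, so it suffices to prove the single-level estimate
$$\Bigl\|\sum_{\substack{Q\in\cq^\ast,\,Q\subset P\\ \ell(Q)=2^{-j}}}|Q|^{-s_1(\cdot)/n}|t_Q|\wz\chi_Q\Bigr\|_{L^{p_1(\cdot)}(P)}\le C\,\phi(P)\,\|t\|_{b_{p_0(\cdot),\fz}^{s_0(\cdot),\phi}(\rn)}$$
with $C$ independent of $P\in\cq$, $j\ge(j_P\vee0)$ and $t$. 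By homogeneity, normalize $\|t\|_{b_{p_0(\cdot),\fz}^{s_0(\cdot),\phi}(\rn)}=1$; by Lemma \ref{l-modular} and Remark \ref{re-mixed}(i), this is equivalent to the modular bound
$$\sum_{\substack{Q\in\cq^\ast,\,Q\subset P\\ \ell(Q)=2^{-j}}}\int_Q\Bigl[\frac{2^{j(s_0(x)+n/2)}|t_Q|}{\phi(P)}\Bigr]^{p_0(x)}\,dx\le 1$$
for every admissible $P,j$, because $|Q|^{-s_0(x)/n}\wz\chi_Q(x)=2^{j(s_0(x)+n/2)}\chi_Q(x)$ on $Q$ when $\ell(Q)=2^{-j}$.

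The core of the argument is a pointwise bound on each $|t_Q|$. Fix such a cube $Q$; since each integrand above is $\le 1$, a standard log-H\"older freezing---using $|x-c_Q|\le\sqrt n\,2^{-j}$ for $x\in Q$ together with the rough a priori bound $2^{j(s_0(c_Q)+n/2)}|t_Q|/\phi(P)\ls 2^{jn/(p_0)_-}$ extracted from $|Q|\,A_Q^{(p_0)_-}\ls 1$---yields
$$|t_Q|\le C\,\phi(P)\,2^{-j(s_0(x)+n/2-n/p_0(x))},\quad x\in Q.$$
Writing $a(x):=2^{j(s_0(x)+n/2)}|t_Q|/\phi(P)$ and invoking the identity $s_1(x)-s_0(x)=n[1/p_1(x)-1/p_0(x)]$ from the Sobolev hypothesis, one computes
$$\Bigl[\frac{2^{j(s_1(x)+n/2)}|t_Q|}{M\phi(P)}\Bigr]^{p_1(x)}=M^{-p_1(x)}\,a(x)^{p_1(x)}\,2^{jn(1-p_1(x)/p_0(x))};$$
the pointwise bound controls $a(x)^{p_1(x)-p_0(x)}$ by $C^{p_1(x)-p_0(x)}\,2^{jn(p_1(x)-p_0(x))/p_0(x)}$, after which the $2^{jn}$ powers cancel exactly. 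Choosing $M$ large enough (depending only on $(p_0)_-,(p_1)_+$ and $C$) renders the left-hand side dominated by $a(x)^{p_0(x)}$ pointwise on $Q$; summing over the disjoint dyadic layer and integrating gives $\varrho_{p_1(\cdot)}(g_1\chi_P/(M\phi(P)))\le 1$, whence $\|g_1\chi_P\|_{L^{p_1(\cdot)}(\rn)}\le M\phi(P)$, which is \eqref{se-embed}.

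The step I expect to be the main obstacle is the pointwise bound on $|t_Q|$: the a priori estimate needed to invoke log-H\"older continuity of $s_0$ and $p_0$ must be extracted from the very integral one is trying to estimate, and the resulting freezing constants must be shown to be uniform in the level $j$ (a standard feature of the log-H\"older estimates in the present setting, cf.\ Lemma \ref{l-eta} and Remark \ref{r-3.10x}). Once this freezing is justified, the remaining ingredients---the exact cancellation of $2^{jn}$ powers afforded by $s_0-n/p_0=s_1-n/p_1$, disjointness of the dyadic layer, and the $\vz$-transform transfer from $b$ to $B$ via Theorem \ref{t-transform}---are routine.
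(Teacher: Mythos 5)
Your proposal is correct, and the overall architecture (prove the sequence-space embedding \eqref{se-embed} first, then transfer to $B$-spaces via Theorem \ref{t-transform}) is exactly the paper's. Where you diverge is in how \eqref{se-embed} itself is obtained. The paper localizes: for a fixed $P\in\cq$ it sets $u_Q:=t_Q$ if $Q\subset P$ and $u_Q:=0$ otherwise, observes that for $q=\fz$ the $P$-localized quasi-norm of $t$ coincides with the global $b_{p(\cdot),\fz}^{s(\cdot),1}(\rn)$ quasi-norm of $u$, and then simply invokes the known Sobolev embedding $b_{p_0(\cdot),\fz}^{s_0(\cdot),1}(\rn)\hookrightarrow b_{p_1(\cdot),\fz}^{s_1(\cdot),1}(\rn)$ of Kempka (\cite[Proposition 3.9]{kempka10}); dividing by $\phi(P)$ and taking the supremum over $P$ finishes the proof in a few lines. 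You instead reprove that single-level estimate from scratch: normalization, reduction to a modular bound via Lemma \ref{l-modular}, the pointwise bound $|t_Q|\ls\phi(P)2^{-j(s_0(x)+n/2-n/p_0(x))}$ on $Q$ obtained by log-H\"older freezing of $s_0$ and $1/p_0$ (with the a priori estimate $\sup_Q a\ls 2^{jn/(p_0)_-}$ bootstrapped from the unit modular), and the exact cancellation of the $2^{jn}$ powers coming from $s_0-n/p_0=s_1-n/p_1$ together with $p_1\ge p_0$ (which the hypotheses force, so that majorizing $a(x)^{p_1(x)-p_0(x)}$ by the pointwise bound is legitimate). This is in substance the proof of the cited Kempka result, so your argument buys self-containedness and makes the uniformity in $j$ of the freezing constants explicit, at the cost of length; the paper's version buys brevity by outsourcing precisely that computation. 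Both are sound.
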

\begin{proof}
To prove this proposition, we only need to show \eqref{se-embed},
since the embedding $B_{p_0(\cdot),\fz}^{s_0(\cdot),\phi}(\rn)\hookrightarrow
B_{p_1(\cdot),\fz}^{s_1(\cdot),\phi}(\rn)$ is a consequence of \eqref{se-embed}
and Theorem \ref{t-transform}.
To prove \eqref{se-embed}, let  $t:=\{t_Q\}_{Q\in\cq^\ast}\in
b_{p_0(\cdot),\fz}^{s_0(\cdot),\phi}(\rn)$ and
 $P\in\cq$ be any given dyadic cube. For all $Q\in\cq^\ast$,
let $u_Q:=t_Q$ when $Q\subset P$ and $u_Q=0$ otherwise. Then,
by the Sobolev-type embedding of
$b_{p(\cdot),\fz}^{s(\cdot)}(\rn)=b_{p(\cdot),\fz}^{s(\cdot),1}(\rn)$
(\cite[Proposition 3.9]{kempka10}), namely,
$b_{p_0(\cdot),\fz}^{s_0(\cdot),1}(\rn)\hookrightarrow
b_{p_1(\cdot),\fz}^{s_1(\cdot),1}(\rn)$, we conclude that
\begin{eqnarray*}
&&\sup_{j\ge(j_P\vee0)}\lf\|\sum_{\gfz{Q\in\cq^\ast,\,Q\subset P}{\ell(Q)=2^{-j}}}
|Q|^{-\frac{s_1(\cdot)}n}|t_Q|\wz\chi_Q\r\|_{L^{p_1(\cdot)}(P)}\\
&&\hs=\sup_{j\ge0}\lf\|\sum_{Q\in\cq^\ast,\,\ell(Q)=2^{-j}}
|Q|^{-\frac{s_1(\cdot)}n}|u_Q|\wz\chi_Q\r\|_{L^{p_1(\cdot)}(\rn)}
= \|u\|_{b_{p_1(\cdot),\fz}^{s_1(\cdot),1}(\rn)}\\
&&\hs\ls\|u\|_{b_{p_0(\cdot),\fz}^{s_0(\cdot),1}(\rn)}
\sim\sup_{j\ge0}\lf\|\sum_{Q\in\cq^\ast,\,\ell(Q)=2^{-j}}
|Q|^{-\frac{s_0(\cdot)}n}|u_Q|\wz\chi_Q\r\|_{L^{p_0(\cdot)}(\rn)}\\
&&\hs\sim\sup_{j\ge(j_P\vee0)}
\lf\|\sum_{\gfz{Q\in\cq^\ast,\,Q\subset P}{\ell(Q)=2^{-j}}}
|Q|^{-\frac{s_0(\cdot)}n}|t_Q|\wz\chi_Q\r\|_{L^{p_0(\cdot)}(P)}.
\end{eqnarray*}
From this, we further deduce that
\begin{eqnarray*}
\|t\|_{b_{p_1(\cdot),\fz}^{s_1(\cdot),\phi}(\rn)}
&&=\sup_{P\in\cq}\frac1{\phi(P)}
\sup_{j\ge(j_P\vee0)}\lf\|\sum_{\gfz{Q\in\cq^\ast,\,Q\subset P}{\ell(Q)=2^{-j}}}
|Q|^{-\frac{s_1(\cdot)}n}|t_Q|\wz\chi_Q\r\|_{L^{p_1(\cdot)}(P)}\\
&&\ls\sup_{P\in\cq}\frac1{\phi(P)}
\sup_{j\ge(j_P\vee0)}
\lf\|\sum_{\gfz{Q\in\cq^\ast,\,Q\subset P}{\ell(Q)=2^{-j}}}
|Q|^{-\frac{s_0(\cdot)}n}|t_Q|\wz\chi_Q\r\|_{L^{p_0(\cdot)}(P)}
\sim\|t\|_{b_{p_0(\cdot),\fz}^{s_0(\cdot),\phi}(\rn)},
\end{eqnarray*}
which implies that \eqref{se-embed} holds true and hence completes the proof of
Proposition \ref{p-se-embed}.
\end{proof}

\begin{theorem}\label{t-sobolev}
Let $\phi\in \cg(\urn)$, $s_0,\ s_1\in C_{\loc}^{\log}(\rn)\cap L^{\fz}(\rn)$ and
$p_0,\ p_1,\ q\in C^{\log}(\rn)$.
Assume that, for all $x\in\rn$, $s_1(x)\le s_0(x)$ and
\begin{equation}\label{sobolev3}
s_0(x)-\frac n{p_0(x)}=s_1(x)-\frac n{p_1(x)}.
\end{equation}
Then
$B_{p_0(\cdot),q(\cdot)}^{s_0(\cdot),\phi}(\rn)
\hookrightarrow B_{p_1(\cdot),q(\cdot)}^{s_1(\cdot),\phi}(\rn).$
\end{theorem}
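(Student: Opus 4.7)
The strategy is to adapt the proof of Proposition~\ref{p-se-embed}, now handling variable $q(\cdot)$ in place of $q=\fz$. As a first reduction, by the $\vz$-transform characterization in Theorem~\ref{t-transform}, it suffices to establish the sequence-space embedding
$$b_{p_0(\cdot),q(\cdot)}^{s_0(\cdot),\phi}(\rn)\hookrightarrow b_{p_1(\cdot),q(\cdot)}^{s_1(\cdot),\phi}(\rn),$$
since $T_\psi\circ S_\vz$ is the identity on $B_{p_i(\cdot),q(\cdot)}^{s_i(\cdot),\phi}(\rn)$ for $i\in\{0,1\}$ and both $S_\vz$ and $T_\psi$ are bounded between matching function and sequence spaces.

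To prove this sequence-space embedding, I will mimic the truncation step used in Proposition~\ref{p-se-embed}. Given $t=\{t_Q\}_{Q\in\cq^\ast}$ in the left-hand side and any dyadic cube $P\in\cq$, set $u_Q^P:=t_Q$ when $Q\subset P$ and $u_Q^P:=0$ otherwise. Since $u^P$ is supported only on cubes $Q\subset P$ (which have $j_Q\ge j_P\vee0$) and these sit inside $P$, the supremum defining $\|u^P\|_{b_{p_i(\cdot),q(\cdot)}^{s_i(\cdot),1}(\rn)}$ is attained (up to equivalence) at the choice $\wz P=P$; consequently the $P$-contribution to the $\phi$-weighted norm of~$t$ is precisely $\phi(P)^{-1}\|u^P\|_{b_{p_i(\cdot),q(\cdot)}^{s_i(\cdot),1}(\rn)}$ for $i\in\{0,1\}$. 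Invoking the Sobolev-type embedding for unweighted variable-exponent Besov sequence spaces (the $q(\cdot)$-general version of \cite[Proposition~3.9]{kempka10}), we obtain
$$\|u^P\|_{b_{p_1(\cdot),q(\cdot)}^{s_1(\cdot),1}(\rn)}\ls\|u^P\|_{b_{p_0(\cdot),q(\cdot)}^{s_0(\cdot),1}(\rn)}$$
with a constant independent of~$P$; dividing by $\phi(P)$ and taking the supremum over $P\in\cq$ yields the required inequality.

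The main obstacle is to justify the unweighted sequence-space Sobolev embedding in its full $q(\cdot)$ generality. Should a direct citation be unavailable, one would prove it by exploiting the pairwise disjointness of dyadic cubes of each fixed generation: for $x$ in $Q\in\cq^\ast$ with $\ell(Q)=2^{-j}$, the level-$j$ sum reduces to
$$\sum_{R\in\cq^\ast,\,\ell(R)=2^{-j}}|R|^{-s_i(x)/n}|t_R|\wz\chi_R(x)
=2^{j(s_i(x)+n/2)}|t_Q|,$$
so that the condition \eqref{sobolev3} together with the log-H\"older continuity of $s_i,\,p_i$ makes the level-$j$ $L^{p_1(\cdot)}$ and $L^{p_0(\cdot)}$ modulars comparable up to a constant uniform in~$j$ (this reduces to a pointwise-in-$Q$ calculation analogous to the constant-exponent Sobolev identity $\|\cdot\|_{L^{p_1}}\le\|\cdot\|_{L^{p_0}}$ on $\ell^p$-sequences via $p_0\le p_1$, which is itself implied by $s_0\ge s_1$ and \eqref{sobolev3}). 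The semimodular definition of $\ell^{q(\cdot)}(L^{p(\cdot)})$ combined with Lemma~\ref{l-inf-norm} then transfers this uniform-in-$j$ control to the full mixed quasi-norms, completing the proof.
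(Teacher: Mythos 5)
Your reduction to the sequence-space embedding $b_{p_0(\cdot),q(\cdot)}^{s_0(\cdot),\phi}(\rn)\hookrightarrow b_{p_1(\cdot),q(\cdot)}^{s_1(\cdot),\phi}(\rn)$ via Theorem \ref{t-transform}, and the per-cube truncation $u^P$, are sound and mirror what the paper does in Proposition \ref{p-se-embed}. The problem is that everything then hinges on the unweighted embedding $b_{p_0(\cdot),q(\cdot)}^{s_0(\cdot)}(\rn)\hookrightarrow b_{p_1(\cdot),q(\cdot)}^{s_1(\cdot)}(\rn)$ for genuinely variable $q(\cdot)$, which is precisely the hard content of the theorem: the paper cites \cite[Proposition 3.9]{kempka10} only for $q=\fz$, and your fallback sketch of the general case does not go through. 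First, the uniform-in-$j$ comparison of the level-$j$ $L^{p_1(\cdot)}$ and $L^{p_0(\cdot)}$ quantities is not ``a pointwise-in-$Q$ calculation'': for variable $p$ there is no identity expressing the $L^{p(\cdot)}$ norm of a generation-$j$ step function as an $\ell^{p}$ norm, and the inequality one needs is the substantive variable-exponent Sobolev estimate, proved in the paper via the r-trick (Lemma \ref{l-r-trick}), the H\"older inequality of Remark \ref{re-vlp}(ii), an annular decomposition $D_{i,P}$, and the interpolation identity $[\cdot]^{p_1(x)}=[\cdot]^{p_0(x)}[\cdot]^{p_1(x)-p_0(x)}$.

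Second, and more fatally, even granting a uniform-in-$j$ norm inequality $\|f_j^{(1)}\|_{L^{p_1(\cdot)}}\ls\|f_j^{(0)}\|_{L^{p_0(\cdot)}}$, your proposed transfer to the mixed modular via Lemma \ref{l-inf-norm} loses powers: from $\sum_j\||f_j^{(0)}|^{q(\cdot)}\|_{L^{p_0(\cdot)/q(\cdot)}(\rn)}\le1$ one gets $\|f_j^{(0)}\|_{L^{p_0(\cdot)}(\rn)}\le a_j^{1/q_+}$ and then only $\||f_j^{(1)}|^{q(\cdot)}\|_{L^{p_1(\cdot)/q(\cdot)}(\rn)}\ls a_j^{q_-/q_+}$, and $\sum_j a_j\le1$ does not imply $\sum_j a_j^{q_-/q_+}<\fz$ when $q_-<q_+$ (take $a_j=j^{-2}$ and $q_-/q_+=1/2$). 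This is exactly the obstruction the paper's proof is designed to circumvent: it defines the bootstrap quantity $\delta_j$ (a sum of $p_0$-modulars over the dilated cubes $P_i$ plus the safety term $2^{-j}$, so that $\delta_j\in[2^{-j},2^{-j}+\theta]$), proves a \emph{pointwise} bound on $\chi_P[\phi(P)]^{-1}\delta_j^{-1/q(x)}2^{j[s_1(x)-n/p_1(x)]}|g_j(x)|$ using Remark \ref{r-3.10x}, and deduces a termwise modular domination by $\delta_j$ with the same weight $\delta_j^{1/q(x)}$ on both sides, which then sums over $j$. Without an analogue of this device your argument does not close; supplying it would essentially reproduce the paper's proof (adapted from band-limited $g_j=\vz_j\ast f$ to generation-$j$ step functions).
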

\begin{proof}
We only give the proof of the case that $q_+\in(0,\fz)$, since the case that $q_+=\fz$
was proved in Proposition \ref{p-se-embed}.
Let $f\in B_{p_0(\cdot),q(\cdot)}^{s_0(\cdot),\phi}(\rn)$
and, for all $j\in\zz_+$ and $x\in\rn$, $g_j(x):=\vz_j\ast f(x)$. Without loss of
generality, we may assume that
$\|f\|_{B_{p_0(\cdot),q(\cdot)}^{s_0(\cdot),\phi}(\rn)}=1$. Next, we
show that $\|f\|_{B_{p_1(\cdot),q(\cdot)}^{s_1(\cdot),\phi}(\rn)}\ls1$.
Obviously, by Remark \ref{r-lattice}, \eqref{mixed-x},
and (i) and (ii) of Remark \ref{re-mixed}, we find that, for all $R\in\cd_0(\rn)$,
\begin{equation}\label{sobolev1}
\sum_{j=(j_R\vee0)}^\fz
\lf\|\lf[\frac{\chi_{R}}{\phi(R)}2^{js_0(\cdot)}|g_j|\r]^{q(\cdot)}
\r\|_{L^{\frac{p_0(\cdot)}{q(\cdot)}}(\rn)}\ls1.
\end{equation}

Let $P\in\cq$ be a given dyadic cube. We claim that
there exists $c\in(0,1)$, independent of $P$, such that,
for all $j\ge [j_P\vee0,\fz)$,
\begin{eqnarray*}
\lf\|\lf[\frac{c\chi_P}{\phi(P)}2^{js_1(\cdot)}|g_j|\r]^{q(\cdot)}
\r\|_{L^{\frac{p_1(\cdot)}{q(\cdot)}}(\rn)}
\le\sum_{i=1}^\fz2^{-i\xi}
\lf\|\lf[\frac{\chi_{P_i}}{\phi(P_i)}2^{js_0(\cdot)}|g_j|\r]^{q(\cdot)}
\r\|_{L^{\frac{p_0(\cdot)}{q(\cdot)}}(\rn)}+2^{-j}=:\delta_j,
\end{eqnarray*}
where $P_i:=2^{i+1+n}P$ and $\xi\in(0,\fz)$.
From this claim and \eqref{sobolev1}, we deduce that
\begin{eqnarray*}
\sum_{j=(j_P\vee0)}^\fz\lf\|\lf[\frac{c\chi_P}{\phi(P)}2^{js_1(\cdot)}|g_j|
\r]^{q(\cdot)}\r\|_{L^{\frac{p_1(\cdot)}{q(\cdot)}}(\rn)}\ls1,
\end{eqnarray*}
which, together with \eqref{mixed-x},
and (i) and (ii) of Remark \ref{re-mixed}, implies that
$$\|f\|_{B_{p_1(\cdot),q(\cdot)}^{s_1(\cdot),\phi}(\rn)}\ls1\sim
\|f\|_{B_{p_0(\cdot),q(\cdot)}^{s_0(\cdot),\phi}(\rn)}.$$

Therefore, it remains to prove the above claim.
Observe that, for all $j\ge[j_P\vee0,\fz)$, $\delta_j\in[2^{-j},2^{-j}+\theta]$
with $\theta\in[0,\fz)$. Then, by Lemma \ref{l-r-trick} and Remark \ref{r-3.10x},
we conclude that, for all $x\in\rn$, $r\in(0,p_-)$ and $m\in(0,\fz)$ large enough,
\begin{eqnarray}\label{sobolev2}
\frac{2^{jr[s_1(x)-\frac n{p_1(x)}]}}{[\phi(P)]^r\delta_j^{r/{q(x)}}}|g_j(x)|^r
&&\ls\eta_{j,2m}\ast\lf(\lf\{\frac{2^{j[s_1(\cdot)-\frac n{p_1(\cdot)}]}}
{\phi(P)\delta_j^{1/{q(\cdot)}}}|g_j|\r\}^r\r)(x)\\
&&\ls\sum_{i=1}^\fz\int_{D_{i,P}}\frac{2^{jn}
(2^{j[s_1(y)-\frac n{p_1(y)}]}|g_j(y)|)^r}
{[\phi(P)]^{r}\delta_j^{\frac r{q(y)}}(1+2^j|x-y|)^{2m}}\,dy
=:\sum_{i=1}^\fz {\rm A}_{j,i}(x),\noz
\end{eqnarray}
where $D_{1,P}:=4\sqrt nP$ and, for all $i\in[2,\fz)$,
$D_{i,P}:=(2^{i+1}\sqrt nP)\backslash (2^i\sqrt nP)$.
For ${\rm A}_{j,1}$, by the H\"older inequality in Remark \ref{re-vlp}(ii),
\eqref{sobolev3}, \eqref{phi-1} and Lemma \ref{l-inf-norm}, we see that
\begin{eqnarray*}
{\rm A}_{j,1}
&&\ls\lf\|\lf[\frac{\chi_{4\sqrt nP}}{\phi(P)\delta_j^{1/q(\cdot)}}
2^{js_0(\cdot)}|g_j|\r]^r\r\|_{L^{\frac{p_0(\cdot)}{r}}(\rn)}
\lf\|\frac{2^{jn}2^{-jnr/p_0(\cdot)}}{(1+2^j|x-\cdot|)^{2m}}
\r\|_{L^{(\frac{p_0(\cdot)}r)^\ast}(\rn)}\\
&&\ls\lf[\frac{\phi(P_1)}{\phi(P)}\r]^r
\lf\|\frac{\chi_{4\sqrt nP}}{\phi(P_1)\delta_j^{1/q(\cdot)}}
2^{js_0(\cdot)}|g_j|\r\|_{L^{p_0(\cdot)}(\rn)}^r\ls1,
\end{eqnarray*}
where the last inequality follows from the definition of $\delta_j$.
Similarly, observe that, for all $x\in P$ and $y\in D_{i,P}$ with $i\ge2$,
$|x-y|\gs 2^{i-j_P}$, then the fact that $j\ge j_P$ further implies that
\begin{eqnarray*}
{\rm A}_{j,i}
&&\ls\frac{2^{(j_P-j)m}}{2^{i(m-\xi/q_-)}}\lf[\frac{\phi(P_i)}{\phi(P)}\r]^r
\lf\|\frac{\chi_{P_i}2^{js_0(\cdot)}|g_j|}{\phi(P_i)\{2^{i\xi}\delta_j\}^{1/q(\cdot)}}
\r\|_{L^{p_0(\cdot)}(\rn)}^r
\lf\|\frac{2^{jn}2^{-jnr/p_0(\cdot)}}{(1+2^j|x-\cdot|)^{m}}
\r\|_{L^{(\frac{p_0(\cdot)}r)^\ast}(\rn)}\\
&&\ls2^{(j_P-j)m}2^{-i(m-r\log_2c_1)}\ls2^{-i(m-\xi/q_--r\log_2c_1)}.
\end{eqnarray*}
Thus, by \eqref{sobolev2}, we conclude that, for all $x\in\rn$,
$$\chi_P(x)[\phi(P)]^{-1}\delta_j^{-1/{q(x)}}
2^{j[s_1(x)-\frac n{p_1(x)}]}|g_j(x)|\ls1.$$
From this, \eqref{sobolev3} and an appropriate choice of
$c\in(0,1)$, we deduce that
\begin{eqnarray*}
&&\lf[\frac{c\chi_P(x)2^{js_1(x)}}{\phi(P)\delta_j^{1/q(x)}}|g_j(x)|\r]^{p_1(x)}\\
&&\hs=c^{p_0(x)}\lf[\frac{\chi_P(x)2^{js_0(x)}}{\phi(P)\delta_j^{1/q(x)}}|g_j(x)|
\r]^{p_0(x)}
\lf[\frac{c\chi_P(x)2^{j[s_1(x)-\frac n{p_1(x)}]}}
{\phi(P)\delta_j^{1/q(x)}}|g_j(x)|\r]^{p_1(x)-p_0(x)}\\
&&\hs\le c^{p_0(x)}\lf[\frac{\chi_P(x)2^{js_0(x)}}{\phi(P)\delta_j^{1/q(x)}}|g_j(x)|
\r]^{p_0(x)}
\le\lf[\frac{\chi_{P_1}(x)2^{js_0(x)}}{\phi(P_1)\{2^\xi\delta_j\}^{1/q(x)}}|g_j(x)|
\r]^{p_0(x)},
\end{eqnarray*}
which, together with the definition of $\delta_j$ and Remark \ref{re-mixed}(ii),
 implies that
the previous claim holds true and hence completes the proof of Theorem \ref{t-sobolev}.
\end{proof}

\begin{remark}
When $\phi\equiv1$, Theorem \ref{t-sobolev} just becomes \cite[Theorem 6.4]{ah10},
which is called the Sobolev inequality therein.
\end{remark}

\section{Equivalent quasi-norms\label{s-equi}}

\hskip\parindent
In this section, we are aimed to characterize $\bbeve$ in terms of the
Peetre maximal functions and establish their atomic characterization
 via Sobolev embeddings.
Following \cite[p.\,19]{dd12}, for all
$f\in\cs'(\rn)$, $a\in(0,\fz)$ and $s:\ \rn\to\rr$,
the \emph{Peetre maximal function} of $f$ is defined by setting, for all $j\in\zz_+$,
\begin{equation*}
\vz_j^{\ast,a}(2^{js(\cdot)}f)(x):=\sup_{y\in\rn}\frac{2^{js(y)}|\vz_j\ast f(y)|}
{(1+2^j|x-y|)^a}.
\end{equation*}

The following Theorem \ref{t-equivalent} is the first main result of this section.
\begin{theorem}\label{t-equivalent}
Let $p,\ q,\ s$, $\phi$ be as in Definition \ref{def-b}
and
\begin{equation}\label{equivalent1-xx}
a\in([n+\log_2c_1]/p_-,\fz).
\end{equation}
Then $f\in\bbeve$ if and only if $f\in\cs'(\rn)$ and
$\|f\|_{\bbeve}^\ast<\fz$,
where
$$\|f\|_{\bbeve}^\ast
:=\sup_{P\in\cq}\frac1{\phi(P)}\lf\|\lf\{
\vz_j^{\ast,a}(2^{js(\cdot)}f)\r\}_{j\ge(j_P\vee0)}\r\|
_{\ell^{q(\cdot)}(L^{p(\cdot)}(P))}.$$
Moreover, for all $f\in\bbeve$,
$\|f\|_{\bbeve}\sim\|f\|_{\bbeve}^\ast$
with equivalent positive constants independent of $f$.
\end{theorem}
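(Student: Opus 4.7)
The plan is to establish the equivalence $\|f\|_{\bbeve}\sim\|f\|^{\ast}_{\bbeve}$, from which both implications in the theorem follow at once. The easy direction $\|f\|_{\bbeve}\le\|f\|^{\ast}_{\bbeve}$ comes from setting $y=x$ in the supremum defining $\vz_j^{\ast,a}(2^{js(\cdot)}f)(x)$, so only the reverse inequality $\|f\|^{\ast}_{\bbeve}\ls\|f\|_{\bbeve}$ requires work.

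The core ingredient is a pointwise majorization of the Peetre maximal function by a convolution with $\eta_{j,m'}$. Pick $r\in(0,\min\{p_-,q_-\})$ close enough to $\min\{p_-,q_-\}$ that $ar>n+r\log_2 c_1$; this is possible by \eqref{equivalent1-xx}. Since $\vz_j\ast f$ has Fourier support in $\{|\xi|\le 2^{j+1}\}$, Lemma \ref{l-r-trick} yields $|\vz_j\ast f(y)|^r\ls\eta_{j,m}\ast|\vz_j\ast f|^r(y)$ for every $y\in\rn$ and any $m>n$. Multiplying by $2^{jrs(y)}$ and applying Lemma \ref{l-eta} with $s$ replaced by $rs$ moves the weight under the convolution; then Peetre's inequality $(1+2^j|y-z|)^{-m'}\le(1+2^j|x-y|)^{m'}(1+2^j|x-z|)^{-m'}$ transfers the base point from $y$ to $x$. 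Choosing $m$ so that $m':=m-rC_{\log}(s)$ lies in $(n,ar]$, one arrives at
$$\vz_j^{\ast,a}(2^{js(\cdot)}f)(x)\ls\lf[\eta_{j,m'}\ast\lf(2^{jrs(\cdot)}|\vz_j\ast f|^r\r)(x)\r]^{1/r},\quad x\in\rn.$$

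Armed with this, the remainder of the proof mirrors Lemma \ref{l-estimate1}. Fix $P\in\cq$, and for each $j\ge(j_P\vee 0)$ split $|\vz_j\ast f|^r=|\vz_j\ast f|^r\chi_{4P}+|\vz_j\ast f|^r\chi_{\rn\setminus 4P}$. The near part is handled by raising to the $r$-th power (Remark \ref{re-norm}), applying Lemma \ref{l-conv-ineq} in $\ell^{q(\cdot)/r}(L^{p(\cdot)/r}(\rn))$, undoing the weight via Lemma \ref{l-eta}, and using the doubling of $\phi$ from \eqref{phi-1} to replace $\phi(P)$ by $\phi(4P)$; its contribution is at most $C\|f\|_{\bbeve}$. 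The far part is decomposed over translates $\{P+\ell(P)l\}_{|l|\ge 2}$: the $\eta_{j,m'}$-kernel supplies decay $(1+|l|)^{-m'}$, Lemma \ref{l-esti-cube} bounds $\phi(P+\ell(P)l)/\phi(P)\ls(1+|l|)^{2\log_2 c_1}$, and a modular estimate in the spirit of \eqref{estimate3} controls each translate. Summation over $l$ converges provided $m'>n+2r\log_2 c_1$, yielding the required bound.

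The main obstacle is the simultaneous compatibility of three exponent constraints on $(r,m')$: the pointwise bound demands $m'\in[n+rC_{\log}(s),ar]$; the $r$-powered Lemma \ref{l-conv-ineq} demands $m'>n+rC_{\log}(1/q)$; and the far-part summation demands $m'>n+2r\log_2 c_1$. The hypothesis $a>(n+\log_2 c_1)/p_-$ is precisely the tuning that, by taking $r$ sufficiently close to $\min\{p_-,q_-\}$, leaves a nonempty window of admissible $m'$ satisfying all three bounds; the $\log_2 c_1$ appearing in \eqref{equivalent1-xx} is exactly what is needed to swallow the doubling defect of $\phi$ in the far-part estimate.
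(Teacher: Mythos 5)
Your easy direction and the pointwise majorization $\vz_j^{\ast,a}(2^{js(\cdot)}f)(x)\lesssim[\eta_{j,m'}\ast(2^{jrs(\cdot)}|\vz_j\ast f|^r)(x)]^{1/r}$ (valid for $n<m'\le ar$) are fine, but the exponent bookkeeping in the second half does not close under \eqref{equivalent1-xx}, and this is a genuine gap, not a technicality. Your route caps $m'$ at $ar$ and, in order to invoke Lemma \ref{l-conv-ineq} at the level $\ell^{q(\cdot)/r}(L^{p(\cdot)/r})$, forces $r<\min\{p_-,q_-\}$ and $m'>n+rC_{\log}(1/q)$; already this pair of constraints requires $a>n/\min\{p_-,q_-\}$, and your opening step ``choose $r$ with $ar>n+r\log_2c_1$'' requires $a>n/\min\{p_-,q_-\}+\log_2c_1$. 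Neither follows from $a>(n+\log_2c_1)/p_-$: take $q_-<p_-$ (then $n/q_->n/p_-$), or take $p_->1$ and $c_1>1$ (then $\log_2c_1>(\log_2c_1)/p_-$). The far part is worse: mirroring Lemma \ref{l-estimate1} with the translate decomposition $P+l\ell(P)$ costs an additional $n$ (the leftover kernel decay must beat the sum over $l\in\zz^n$ after paying $(1+|l|)^{2r\log_2c_1}$ for the $\phi$-ratio from Lemma \ref{l-esti-cube}), so that argument needs $m'$ above roughly $2n+2r\log_2c_1$, which is incompatible with $m'\le ar$ whenever $a$ is close to $(n+\log_2c_1)/p_-$. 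In short, your scheme proves the theorem only under a Triebel--Lizorkin-type condition $a\gtrsim n/\min\{p_-,q_-\}+\log_2c_1$, whereas \eqref{equivalent1-xx} involves only $p_-$.

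The paper reaches \eqref{equivalent1-xx} precisely by not routing the Peetre maximal function through the vector-valued convolution inequality. In Lemma \ref{l-equi} it fixes $j$, raises everything to a power $t<p_-$ so that $(p/t)_->1$, and bounds $[\vz_j^{\ast,a}(2^{js(\cdot)}f)]^t$ by the scalar Hardy--Littlewood maximal operator on $L^{p(\cdot)/t}(\rn)$ (Lemma \ref{l-hlo}) -- so only $p_-$ enters -- using concentric dilates $P_k^n=2^{k+1+n}P$, which cost a single power of $\log_2c_1$ per dyadic step. The $\ell^{q(\cdot)}$-layer, whose normalizers $\lz_j^{1/q(\cdot)}$ do not factor out of $L^{p(\cdot)}$-norms when $q$ is variable, is then handled at the modular level by the self-referential normalization $\delta_j^P$ (defined as the right-hand side of the inequality being proved), and only at the very end is the sum over $j$ taken. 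To salvage your approach you would either have to strengthen \eqref{equivalent1-xx} to involve $q_-$, or replace Lemma \ref{l-conv-ineq} by per-$j$ maximal estimates and a $\delta_j^P$-type device as in Lemma \ref{l-equi}.
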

\begin{remark}
Theorem \ref{t-equivalent} goes back to \cite[Theorem 1]{dd12} when $\phi\equiv1$.
\end{remark}
To prove Theorem \ref{t-equivalent}, we need some technical lemmas.
For all $r\in(0,\fz)$, denote by $L^r_{\rm loc}(\rn)$ the
\emph{set of all $r$-locally integrable functions} on $\rn$.
Recall that the \emph{Hardy-Littlewood maximal operator} $\cm$ is
defined by setting, for all $f\in L^1_{\rm loc}(\rn)$ and $x\in\rn$,
$$\cm(f)(x):=\sup_{B\ni x}\frac1{|B|}\int_B|f(y)|\,dy,$$
where the supremum is taken over all balls $B$ of $\rn$ containing $x$.

The following Lemma \ref{l-hlo} is just \cite[Theorem 4.3.8]{dhr11}.

\begin{lemma}\label{l-hlo}
Let $p\in  C^{\log}(\rn)$ with $p_-\in(1,\fz]$. Then there exists a positive
constant $C$, independent of $f$, such that, for all $f\in\vlp$,
$\|\cm(f)\|_{\vlp}\le C\|f\|_{\vlp}$.
\end{lemma}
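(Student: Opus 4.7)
By Lemma \ref{l-modular} and Remark \ref{r-modular}, the norm inequality $\|\cm f\|_{\vlp}\le C\|f\|_{\vlp}$ is equivalent to the modular inequality: there exists $C\in(0,\fz)$ such that, whenever $\varrho_{p(\cdot)}(f)\le 1$, one has $\varrho_{p(\cdot)}(\cm f/C)\le 1$. My plan is therefore to normalize $\|f\|_{\vlp}=1$ and control $\int_\rn \cm f(x)^{p(x)}\,dx$ by a fixed constant, following the classical argument of Diening.

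The key ingredient is a pointwise Hedberg-type estimate of the form
$$\cm f(x)^{p(x)}\ls \bigl(\cm(|f|^{p(\cdot)/p_0})(x)\bigr)^{p_0}+h(x), \qquad x\in\rn,$$
valid for some auxiliary constant exponent $p_0\in(1,p_-)$ and some fixed function $h\in L^1(\rn)$ whose $L^1$-norm depends only on the log-Hölder data of $p$. I would establish this by fixing $x\in\rn$ and a ball $B=B(z,r)\ni x$, and analyzing the average $|B|^{-1}\int_B|f|$ in two regimes. In the small-scale regime (roughly $r\le (e+|x|)^{-1}$), the local log-Hölder condition \eqref{ve1} gives $|p(y)-p(x)|\ls 1/\log(e+1/|x-y|)$ throughout $B$; Jensen's inequality with the average exponent, together with the splitting $|f|=|f|\chi_{\{|f|\le 1\}}+|f|\chi_{\{|f|>1\}}$ to absorb the error $|f(y)|^{p(y)-p(x)}$, produces both the $\cm(|f|^{p(\cdot)/p_0})$ term and a localized decaying correction. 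In the large-scale regime, the global log-Hölder condition \eqref{ve2} allows replacement of $p(y)$ by $p_\fz$ up to an error $C_\fz/\log(e+|x|)$, which yields an additional contribution to $h$ of the form $(e+|x|)^{-n p_\fz}$, again integrable on $\rn$. Taking the supremum over all $B\ni x$ gives the pointwise estimate.

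Integrating over $\rn$ yields
$$\int_\rn \cm f(x)^{p(x)}\,dx \ls \int_\rn \bigl(\cm(|f|^{p(\cdot)/p_0})(x)\bigr)^{p_0}\,dx + \int_\rn h(x)\,dx.$$
The second term is bounded by construction. For the first, invoke the classical strong $(p_0,p_0)$-boundedness of $\cm$ on $L^{p_0}(\rn)$, which is available because $p_0>1$; this bounds the first integral by a constant multiple of $\int_\rn |f(x)|^{p(x)}\,dx\le 1$, as required.

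The main obstacle is engineering the Hedberg-type pointwise estimate so that the correction $h$ lies in $L^1(\rn)$ uniformly in $f$ with $\|f\|_{\vlp}\le 1$; the interplay between \eqref{ve1} (handling small balls) and \eqref{ve2} (handling large balls) is delicate and is precisely why both log-Hölder conditions appear in the hypotheses of \cite[Theorem 4.3.8]{dhr11}. The assumption $p_-\in(1,\fz]$ plays a dual role here: it guarantees the existence of an intermediate exponent $p_0>1$, and it enables the classical $L^{p_0}(\rn)$-boundedness of $\cm$ at the final step.
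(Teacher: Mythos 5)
The paper does not prove this lemma at all: it is quoted verbatim as \cite[Theorem 4.3.8]{dhr11}, and the argument you outline --- reduction to a modular estimate, a Hedberg-type pointwise bound assembled from the local log-H\"older condition \eqref{ve1} on small balls and the decay condition \eqref{ve2} on large balls, followed by the classical $L^{p_0}$-boundedness of $\cm$ for an intermediate exponent $p_0\in(1,p_-)$ --- is precisely the proof given in that reference. Your sketch is therefore correct in outline, with the one substantive piece of work (the verification that the error term $h$ is in $L^1(\rn)$ uniformly over all $f$ with $\varrho_{p(\cdot)}(f)\le 1$) being exactly the ``key estimate'' of \cite[Section 4.2]{dhr11}, which you correctly identify as the crux rather than glossing over it.
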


The following technical lemma plays a key role in
the proof of Theorem \ref{t-equivalent}.

\begin{lemma}\label{l-equi}
Let $p,\ q,\ s$, $\phi$ be as in Definition \ref{def-b} and
$a\in(n+\log_2c_1+\vez/q_-,\fz)$ with $\vez\in(0,\fz)$.
Assume that $p_-\in(1,\fz)$, $q_+\in(0,\fz)$ and $f\in \bbeve$ with norm 1.
Then there exists a positive constant $c$ such that,
for all $P\in\cq$ and $j\in\zz_+$ with $j\ge(j_P\vee0)$,
\begin{eqnarray}\label{equi-x}
&&\inf\lf\{\lz_j\in(0,\fz):\
\varrho_{p(\cdot)}\lf(\frac{c\chi_P\vz_j^{\ast,a}(2^{js(\cdot)}f)}
{\phi(P)\lz_j^{1/q(\cdot)}}\r)\le1\r\}\\
&&\hs\le\sum_{k=1}^\fz
\inf\lf\{\eta_j\in(0,\fz):\
\varrho_{p(\cdot)}\lf(\frac{\chi_{P_k^n}2^{js(\cdot)}|\vz_j\ast f|}
{2^{k\vez /q(\cdot)}\phi(P_k^n)\eta_j^{1/q(\cdot)}}\r)\le1\r\}
+2^{-\sigma[j-(j_P\vee0)]},\noz
\end{eqnarray}
where, for all $k\in\nn$, $P_k^n:=2^{k+1+n}P$ and
$\sigma\in(0,\frac{a-n}{4(1/q_--1/q_+)})$.
\end{lemma}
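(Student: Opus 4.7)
The plan is to dominate $\vz_j^{\ast,a}(2^{js(\cdot)}f)(x)$ pointwise at each $x\in P$ by a sum of convolutions, each localized near one of the dilates $P_k^n$, and then convert this pointwise estimate into the claimed inequality between infima via the semimodular machinery, in the spirit of the proof of Theorem~\ref{t-sobolev}.

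For $x\in P$, decompose $\rn=\bigcup_{k=1}^{\fz}A_k$ with $A_1:=P_1^n$ and $A_k:=P_k^n\setminus P_{k-1}^n$ for $k\ge2$, so that
$$\vz_j^{\ast,a}(2^{js(\cdot)}f)(x)\le\sum_{k=1}^{\fz}V_k(x),\qquad V_k(x):=\sup_{y\in A_k}\frac{2^{js(y)}|\vz_j\ast f(y)|}{(1+2^j|x-y|)^a}.$$
For $k\ge 2$, $y\in A_k$, and $x\in P$, the distance bound $|x-y|\gs2^{k-j_P}$ (combined with $j\ge j_P\vee 0$) produces an intrinsic decay factor $2^{-a(k+j-j_P)}$ in $V_k(x)$. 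Choosing $r\in(0,p_-)$ with $ar>n+rC_{\log}(s)$, which is possible since $p_->1$ and $a>n+\log_2c_1+\vez/q_-$, the r-trick (Lemma~\ref{l-r-trick}) together with Lemma~\ref{l-eta} and the triangle-type inequality $1+2^j|x-z|\le(1+2^j|x-y|)(1+2^j|y-z|)$ yields, for suitable $m>n$,
$$V_k(x)^r\ls\mathcal F_k\,\eta_{j,m}\ast\bigl(\chi_{\wz P_k^n}|2^{js(\cdot)}\vz_j\ast f|^r\bigr)(x),$$
where $\wz P_k^n$ is a mild enlargement of $P_k^n$ to accommodate the effective support of the convolution, $\mathcal F_1\sim1$, and $\mathcal F_k\ls 2^{-ar(k+j-j_P)}$ for $k\ge2$.

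Denote the infima on the left-hand and right-hand sides of \eqref{equi-x} by $\Lambda$ and $\mathcal E_k$ respectively, and write $\mu:=\sum_{k=1}^{\fz}\mathcal E_k+2^{-\sigma[j-(j_P\vee0)]}$. By Remark~\ref{r-modular}, the desired inequality $\Lambda\le\mu$ reduces to producing a small universal constant $c\in(0,\fz)$ such that
$$\varrho_{p(\cdot)}\bigl(c\chi_P\vz_j^{\ast,a}(2^{js(\cdot)}f)/[\phi(P)\mu^{1/q(\cdot)}]\bigr)\le1.$$
Plugging the pointwise bound for $V_k$ into this semimodular, one invokes: the doubling property $\phi(P_k^n)\ls c_1^{k+n+1}\phi(P)$ (Lemma~\ref{l-esti-cube}); the boundedness of the Hardy-Littlewood maximal operator on $L^{p(\cdot)/r}(\rn)$ (Lemma~\ref{l-hlo}, applicable since $p_-/r>1$); and the compensating weight $2^{k\vez/q(\cdot)}$ built into $\mathcal E_k$. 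These combine so that each summand can be absorbed into the corresponding ratio $\mathcal E_k/\mu$, provided the residual geometric factor $2^{-kr(a-n-\log_2c_1-\vez/q_-)}$ is summable in $k$ — which is precisely the content of the hypothesis on $a$. The portion of the $k$-sum where this absorption fails (notably where $\mathcal E_k$ is too small to balance the ambient weights) is controlled crudely via the global unit-norm assumption $\|f\|_{\bbeve}=1$, and produces the additive error $2^{-\sigma[j-(j_P\vee0)]}$; the constraint $\sigma<(a-n)/[4(1/q_--1/q_+)]$ quantifies the worst mismatch incurred when raising $\mu^{1/q(\cdot)}$ through a variable-exponent power.

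The main obstacle is the bookkeeping forced by the variable exponent $q(\cdot)$ sitting in $\mu^{1/q(\cdot)}$: sums of positive reals do not interact linearly with the pointwise operation $\xi\mapsto\xi^{1/q(x)}$, so one cannot simply add fixed-$k$ inequalities and must instead shuttle between the semimodular and the norm through Lemma~\ref{l-inf-norm} at each step. A secondary technical point is the simultaneous choice of $r$ satisfying $r<p_-$ (for the maximal-function bound), $ar>n+rC_{\log}(s)$ (so the r-trick cleanly localizes), and, if part of the argument is routed through Lemma~\ref{l-conv-ineq} via Remark~\ref{re-norm}, also $r<q_-$; the hypothesis $a>n+\log_2c_1+\vez/q_-$ is precisely what makes all three compatible.
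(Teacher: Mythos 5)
Your architecture is the same as the paper's: set $\mu$ equal to the right-hand side, observe that $2^{-\sigma[j-(j_P\vee0)]}\le\mu\ls 1$ (the upper bound coming from $\|f\|_{\bbeve}=1$), reduce via Lemma \ref{l-inf-norm} and Remark \ref{re-mixed}(ii) to the norm estimate $\|\chi_P\mu^{-1/q(\cdot)}\vz_j^{\ast,a}(2^{js(\cdot)}f)/\phi(P)\|_{L^{p(\cdot)}(\rn)}\ls1$, bound the Peetre maximal function pointwise by convolutions localized on the dilates $P_k^n$, and absorb the $k$-th piece into $\mathcal E_k$ using the doubling of $\phi$, the weight $2^{k\vez/q(\cdot)}$ and the boundedness of $\cm$ (the paper simply takes $r=1$, which suffices because $p_->1$). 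The step you leave unexecuted, however, is exactly the one that makes the lemma delicate: after the pointwise bound one must move the factor $\mu^{-1/q(x)}$ inside the convolution as $\mu^{-1/q(z)}$, i.e.\ control $\mu^{1/q(z)-1/q(x)}$. The paper does this by splitting the near-field integral at the intermediate scale $2^{-[j-(j_P\vee0)]/2}$: on $B(x,2^{-[j-(j_P\vee0)]/2})$ the log-H\"older continuity of $1/q$ gives $|1/q(z)-1/q(x)|\ls C_{\log}(1/q)/[j-(j_P\vee0)]$, so the lower bound $\mu\ge2^{-\sigma[j-(j_P\vee0)]}$ yields $\mu^{1/q(z)-1/q(x)}\ls1$; outside that ball one pays the full factor $2^{2\sigma[j-(j_P\vee0)](1/q_--1/q_+)}$, which is beaten by the kernel decay $(1+2^j|x-z|)^{-a}$ over distances at least $2^{-[j-(j_P\vee0)]/2}$ precisely because $\sigma<\frac{a-n}{4(1/q_--1/q_+)}$. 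Your description of the additive term $2^{-\sigma[j-(j_P\vee0)]}$ as being ``produced'' by a failing portion of the $k$-sum misidentifies its role: it is inserted into the right-hand side by fiat, solely to furnish this lower bound on $\mu$; every $k$ is in fact absorbed.

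A second, smaller point: the auxiliary condition $ar>n+rC_{\log}(s)$ you impose is not implied by the hypotheses (at $r=1$ it reads $a>n+C_{\log}(s)$, and $C_{\log}(s)$ may be arbitrarily large), and it is not needed. The loss of $rC_{\log}(s)$ incurred when applying Lemma \ref{l-eta} is absorbed into the free decay parameter $m$ of the $r$-trick, so the binding constraint coming from the Peetre denominator is only $ar>n$, which holds with $r=1$ since $a>n+\log_2c_1+\vez/q_-$; indeed the paper avoids Lemma \ref{l-eta} altogether here by keeping $2^{js(z)}|\vz_j\ast f(z)|$ as the integrand throughout. With these two repairs your outline coincides with the paper's proof.
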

\begin{proof}
Let $\delta_j^P$ be the right hand side term of \eqref{equi-x}.
Then, by Remark \ref{r-lattice}, we easily see that
\begin{eqnarray*}
\delta_j^P&&\le\sum_{k=1}^\fz2^{-k\vez}\frac1{\phi(P_k^n)}
\lf\|\lf\{2^{js(\cdot)}|\vz_j\ast f|\r\}_{j\ge (j_{P_k^n}\vee0)}
\r\|_{\ell^{q(\cdot)}(L^{p(\cdot)}(P_k^n))}+2^{-\sigma[j-(j_P\vee0)]}\\
&&\le\sum_{k=1}^\fz2^{-k\vez}\|f\|_{\bbeve}+2^{-\sigma[j-(j_P\vee0)]}
=1/(2^\vez-1)+2^{-\sigma[j-(j_P\vee0)]},
\end{eqnarray*}
which implies that
\begin{equation}\label{delta}
\delta_j^P\in\lf[2^{-\sigma[j-(j_P\vee0)]},1/(2^\vez-1)+2^{-\sigma[j-(j_P\vee0)]}\r].
\end{equation}
Thus, to prove Lemma \ref{l-equi},
we only need to show that, for some positive constant $c$,
\begin{equation*}
\inf\lf\{\lz_j\in(0,\fz):\ \varrho_{p(\cdot)}
\lf(\frac{c\chi_P(\delta_j^P)^{-1/q(\cdot)}\vz_j^{\ast,a}(2^{js(\cdot)}f)}
{\phi(P)\lz_j^{1/q(\cdot)}}\r)\le1\r\}\le1,
\end{equation*}
which, via Remark \ref{re-mixed}(ii), and (i) and (ii) of Lemma \ref{l-inf-norm},
is a consequence of
\begin{equation}\label{peetre2}
{\rm H}_P:=\lf\|\frac{\chi_P(\delta_j^P)^{-1/q(\cdot)}}{\phi(P)}
\vz_j^{\ast,a}(2^{js(\cdot)}f)\r\|_{L^{p(\cdot)}(\rn)}\ls1.
\end{equation}

Next we prove \eqref{peetre2}.
By Lemma \ref{l-r-trick} and the inequality that, for all $x,\ y,\ z\in\rn$,
$$(1+2^{-j}|x-y|)^{-a}\le(1+2^{-j}|x-z|)^{-a}(1+2^{-j}|z-y|)^a,$$
 we find that, for all $x\in\rn$,
\begin{eqnarray*}
\vz_j^{\ast,a}(2^{js(\cdot)}f)(x)
&&\ls\sup_{y\in\rn}
\int_\rn\frac{2^{jn}2^{js(z)}|\vz_j\ast f(z)|}{(1+2^j|y-z|)^{2a}}\,dz
\frac1{(1+2^j|x-y|)^a}\\
&&\ls\int_\rn\frac{2^{jn}2^{js(z)}|\vz_j\ast f(z)|}{(1+2^j|x-z|)^{a}}\,dz\\
&&\sim\int_{4\sqrt nP}\frac{2^{jn}2^{js(z)}
|\vz_j\ast f(z)|}{(1+2^j|x-z|)^{a}}\,dz
+\sum_{k=2}^\fz\int_{D_{k,P}}\cdots\,dz
=:{\rm A}_{j}(x)+\sum_{k=2}^\fz{\rm A}_{j}^{k}(x),
\end{eqnarray*}
where, for all $k\in\nn\cap[2,\fz)$,
$D_{k,P}:=(2^{k+1}\sqrt nP)\backslash (2^k\sqrt nP)$.
Thus, we obtain
\begin{eqnarray}\label{5.4-x}
{\rm H}_P
&&\le\lf\|\frac{\chi_P{\rm A}_{j}(\cdot)}{[\delta_j^P]^{1/q(\cdot)}\phi(P)}
\r\|_{L^{p(\cdot)}(\rn)}
+\lf\|\frac{\chi_P}{[\delta_j^P]^{1/q(\cdot)}\phi(P)}
\sum_{k=2}^\fz{\rm A}_{j}^{k}(\cdot)\r\|_{L^{p(\cdot)}(\rn)}
=:{\rm H}_{P,1}+{\rm H}_{P,2}.
\end{eqnarray}

We first estimate ${\rm H}_{P,1}$. For all $x\in P$,
we write
\begin{eqnarray}\label{peetre3}
\quad{\rm A}_{j}(x)
\sim\lf\{\int_{B_{-1}^j(x)}+
\sum_{i=0}^\fz\int_{B_i^j(x)}\r\}\frac{2^{jn}2^{js(z)}
|\vz_j\ast f(z)|\chi_{4\sqrt nP}(z)}{(1+2^j|x-z|)^{a}}\,dz
=:{\rm A}_{j,1}(x)+{\rm A}_{j,2}(x),
\end{eqnarray}
where, for all $x\in\rn$,
$B_{-1}^j(x):=B(x,2^{-[j-(j_P\vee0)]/2})$ and, for all $i\in\zz_+$,
$$B_i^j(x):=B\lf(x,2^{-[j-(j_P\vee0)]/2+i+1}\r)\backslash
B\lf(x,2^{-[j-(j_P\vee0)]/2+i}\r).$$
From \eqref{delta}, $q\in  C^{\log}(\rn)$ and Remark \ref{re-conv}(ii),
we deduce that, for all $x\in\rn$ and
$z\in B_{-1}^j(x)$,
\begin{eqnarray*}
(\delta_j^P)^{\frac{1}{q(z)}-\frac1{q(x)}}
&&\le \lf\{2^{\sigma[j-(j_P\vee0)]}\delta_j^P\r\}^{|\frac1{q(z)}-\frac1{q(x)}|}
\lf\{2^{\sigma[j-(j_P\vee0)]}\r\}^{|\frac1{q(z)}-\frac1{q(x)}|}\\
&&\ls 2^{2\sigma[j-(j_P\vee0)]C_{\log}(1/q)/\log(e+1/|x-z|)}\ls1.
\end{eqnarray*}
By this, $a\in(n,\fz)$ and \cite[p.\,59, (3.9)]{stein71}, we conclude that, for all $x\in P$,
\begin{eqnarray}\label{peetre4}
\frac{(\delta_j^P)^{-1/q(x)}}{\phi(P)}{\rm A}_{j,1}(x)
&&\ls\frac1{\phi(P)}\int_{B_{-1}^j(x)}
\frac{2^{jn}2^{js(z)}|\vz_j\ast f(z)|\chi_{4\sqrt nP}(z)}
{[\delta_j^P]^{1/q(z)}(1+2^j|x-z|)^{a}}\,dz\\
&&\ls\cm\lf(\frac{2^{js(\cdot)}|\vz_j\ast f|\chi_{4\sqrt nP}}
{[\delta_j^P]^{1/q(\cdot)}\phi(P)}\r)(x).\noz
\end{eqnarray}
On the other hand, by \eqref{delta}, we see that,
for all $x\in P$ and $z\in \rn$ with $i\in\zz_+$,
\begin{equation}\label{5.8x}
(\delta_j^P)^{[\frac1{q(z)}-\frac1{q(x)}]}\ls 2^{2\sigma[j-(j_P\vee0)]
(\frac1{q_-}-\frac1{q_+})}
\end{equation}
and
$1+2^j|x-z|\ge 1+2^j2^{-\frac{j-(j_P\vee0)}{2}+i}$.
Thus, by $
\sigma\in(0,\frac{a-n}{4(1/q_--1/q_+)}),
$
we conclude that, for all $x\in P$,
\begin{eqnarray*}
\frac{(\delta_j^P)^{-\frac 1{q(x)}}}{\phi(P)}{\rm A}_{j,2}(x)
&&\ls\sum_{i=0}^\fz\frac{2^{2\sigma[j-(j_P\vee0)](\frac1{q_-}-\frac1{q_+})}}
{\phi(P)2^{[\frac{j+(j_P\vee0)}{2}+i]a}}
\int_{B_i^j(x)}\frac{2^{jn}2^{js(z)}}{(\delta_j^P)^{1/q(z)}}|\vz_j\ast f(z)|
\chi_{4\sqrt nP}(z)\,dz\\
&&\ls2^{j[2\sigma (\frac1{q_-}-\frac1{q_+})+\frac n2-\frac{a}2]}
2^{(j_P\vee0)[-2\sigma (\frac1{q_-}-\frac1{q_+})-\frac{a}2+\frac n2]}\\
&&\hs\hs\times\sum_{i=0}^\fz2^{i(n-a)}
\cm\lf(\frac{\chi_{4\sqrt nP}}{[\delta_j^P]^{1/q(\cdot)}\phi(P)}2^{js(\cdot)}
|\vz_j\ast f|\r)(x)\\
&&\ls\cm\lf(\frac{\chi_{4\sqrt nP}}{[\delta_j^P]^{1/q(\cdot)}\phi(P)}2^{js(\cdot)}
|\vz_j\ast f|\r)(x),
\end{eqnarray*}
which, together with \eqref{peetre3} and \eqref{peetre4}, implies that,
for all $x\in P$,
\begin{equation*}
\frac{(\delta_j^P)^{-1/q(x)}}{\phi(P)}{\rm A}_{j}(x)
\ls\cm\lf(\frac{\chi_{4\sqrt nP}}{[\delta_j^P]^{1/q(\cdot)}\phi(P)}2^{js(\cdot)}
|\vz_j\ast f|\r)(x).
\end{equation*}
By this, Lemma \ref{l-hlo} and \eqref{phi-1},
 we further know that
\begin{eqnarray}\label{peetre5}
{\rm H}_{P,1}
&&\ls\lf\|\frac{\chi_{2^{n+2}P}}{[\delta_j^P]^{1/q(\cdot)}
\phi(2^{n+2}P)}2^{js(\cdot)}
|\vz_j\ast f|\r\|_{L^{p(\cdot)}(\rn)}\\
&&\ls2^{\vez/q_-}\lf\|\frac{\chi_{2^{n+2}P}2^{-\vez/q(\cdot)}}{[\delta_j^P]^{1/q(\cdot)}
\phi(2^{n+2}P)}2^{js(\cdot)}
|\vz_j\ast f|\r\|_{L^{p(\cdot)}(\rn)}
\ls1,\noz
\end{eqnarray}
where the last inequality comes from the definition of $\delta_j^P$.

We now estimate ${\rm H}_{P,2}$.
Notice that, when $x\in P$ and $z\in D_{k,P}$ with $k\in\nn\cap[2,\fz)$,
$1+2^j|x-z|\gs 2^k2^{j-j_P}$.
Then, by \eqref{5.8x} and \eqref{equi-x}, we see that, for all $x\in P$,
\begin{eqnarray*}
(\delta_j^P)^{-1/q(x)} {\rm A}_{j}^{k}(x)
&&\ls2^{2\sigma [j-(j_P\vee0)](\frac1{q_-}-\frac1{q_+})}
2^{-(k+j-j_P)a}2^{jn}2^{k\vez /q_-}\int_{D_{k,P}}\frac{2^{-k\vez /q(z)}}{[\delta_j^P]^{1/q(z)}}
2^{js(z)}|\vz_j\ast f(z)|\,dz\\
&&\ls 2^{-(j-j_P)[a-n-2\sigma (\frac1{q_-}-\frac1{q_+})]}
2^{-k(a-n-\frac{\vez}{q_-})}\cm\lf(\frac{\chi_{P_k^n}2^{-k\vez /q(\cdot)}}
{[\delta_j^P]^{1/q(\cdot)}}2^{js(\cdot)t}|\vz_j\ast f|\r)(x)\\
&&\ls2^{-k(a-n-\frac{\vez}{q_-})}
\cm\lf(\frac{\chi_{P_k^n}2^{-k\vez/q(\cdot)}}
{[\delta_j^P]^{1/q(\cdot)}}2^{js(\cdot)}|\vz_j\ast f|\r)(x),
\end{eqnarray*}
which, combined with Lemma \ref{l-hlo}, \eqref{phi-1},
 the definition of $\delta_j^P$ and $a\in(n+\log_2c_1+\vez/q_-,\fz)$, implies that
\begin{eqnarray}\label{peetre6}
{\rm H}_{P,2}
&&\ls\sum_{k=2}^\fz2^{-k(a-n-\frac{\vez}{q_-}-\log_2 c_1)}
\lf\|\frac{\chi_{P_k^n}2^{-k\vez/q(\cdot)}}
{\phi(P_k^n)[\delta_j^P]^{1/q(\cdot)}}2^{js(\cdot)}
|\vz_j\ast f|\r\|_{L^{p(\cdot)}(\rn)}\ls1.
\end{eqnarray}

Combining \eqref{5.4-x}, \eqref{peetre5} and \eqref{peetre6}, we conclude that
\eqref{peetre2} holds true and then complete the proof of Lemma \ref{l-equi}.
\end{proof}
\begin{proof}[Proof of Theorem \ref{t-equivalent}]
Let $f\in\cs'(\rn)$ and $\|f\|_{\bbeve}^\ast<\fz$. Then,
by the obvious fact that,
for all $j\in\zz_+$ and $x\in\rn$,
$$2^{js(x)}|\vz_j\ast f(x)|\le \vz_{j}^{\ast,a}(2^{js(\cdot)}f)(x),$$
we find that $\|f\|_{\bbeve}\le\|f\|_{\bbeve}^\ast$ and hence $f\in\bbeve$.
Thus, to complete the proof of this theorem, we only need to show that,
for all $f\in\bbeve$,
\begin{equation}\label{equivalent1}
\|f\|_{\bbeve}^\ast\ls\|f\|_{\bbeve}.
\end{equation}

Without loss of generality, to prove \eqref{equivalent1},
we may assume that $\|f\|_{\bbeve}=1$ and show that
$\|f\|_{\bbeve}^\ast\ls1$. By \eqref{equivalent1-xx},
we find that there exist $t\in(0,p_-)$ and $\vez\in(0,\fz)$ such that
\begin{equation}\label{equivalent1-z}
at\in\lf(n+\log_2 c_1+{\vez}/{q_-},\fz\r).
\end{equation}
Let $P\subset\rn$ be a given dyadic cube. Next we show that
\begin{equation}\label{peetre1-x}
\frac1{[\phi(P)]^t}\lf\|\lf\{[\vz_j^{\ast,a}(2^{js(\cdot)}f)]^t\r\}_{j\ge (j_P\vee0)}
\r\|_{\ell^{q(\cdot)/t}(L^{p(\cdot)/t}(P))}\ls1
\end{equation}
with implicit positive constant independent of $P$,
which, by Lemma \ref{l-modular} and Remark \ref{re-mixed}(i),
is equivalent to prove that
$\sum_{j=(j_P\vee0)}^\fz {\rm I}_{P,j}\ls1$, where
\begin{equation*}
{\rm I}_{P,j}:=\inf\lf\{\lz_j\in(0,\fz):\
\varrho_{\frac{p(\cdot)}t}\lf(\frac{c\chi_P[\vz_j^{\ast,a}(2^{js(\cdot)}f)]^t}
{[\phi(P)]^t\lz_j^{t/q(\cdot)}}\r)\le1\r\}
\end{equation*}
with $c$ being a positive constant sufficiently small.
Since
\begin{eqnarray*}
\lf[\vz_j^{\ast,a}(2^{js(\cdot)}f)(x)\r]^t=
\sup_{y\in\rn}\frac{2^{js(y)t}|\vz_j\ast f(y)|^t}{(1+2^j|x-y|)^{at}},
\end{eqnarray*}
it follows, from Lemma \ref{l-equi}, that, for all $j\in\zz_+\cap[(j_P\vee0),\fz)$,
\begin{eqnarray*}
{\rm I}_{P,j}
&&\le\sum_{k=1}^\fz
\inf\lf\{\eta_j\in(0,\fz):\
\varrho_{\frac{p(\cdot)}t}\lf(\frac{\chi_{P_k^n}2^{js(\cdot)t}|\vz_j\ast f|^t}
{2^{k\vez t/q(\cdot)}[\phi(P_k^n)]^t\eta_j^{t/q(\cdot)}}\r)\le1\r\}
+2^{-\wz\sigma[j-(j_P\vee0)]}\\
&&=\sum_{k=1}^\fz2^{-k\vez}
\inf\lf\{\eta_j\in(0,\fz):\
\varrho_{\frac{p(\cdot)}t}\lf(\frac{\chi_{P_k^n}
2^{js(\cdot)t}|\vz_j\ast f|^t}
{[\phi(P_k^n)]^t\eta_j^{t/q(\cdot)}}\r)\le1\r\}+2^{-\wz\sigma[j-(j_P\vee0)]}
=:\wz{\delta_j^P},
\end{eqnarray*}
where $P_k^n:=2^{k+1+n}P$ and $\wz\sigma\in(0,\frac{at-n}{4(1/q_--1/q_+)})$.
From this, we further deduce that
\begin{eqnarray*}
\sum_{j=(j_P\vee0)}^\fz {\rm I}_{P,j}
&&\ls\sum_{k=1}^\fz
\frac{2^{-k\vez}}{[\phi(P_k^n)]^t}
\lf\|\lf\{2^{js(\cdot)}|\vz_j\ast f|\r\}_{j\ge(j_P\vee0)}\r\|
_{\ell^{q(\cdot)}(L^{p(\cdot)}(P_k^n))}^t+1\\
&&\ls\sum_{k=1}^\fz2^{-k\vez}\|f\|_{\bbeve}^t+1\ls1,
\end{eqnarray*}
which implies that \eqref{peetre1-x} holds true.
This finishes the proof of Theorem \ref{t-equivalent}.
\end{proof}

As applications of Theorem \ref{t-equivalent}, we obtain more equivalent
quasi-norms of Besov-type spaces with variable smoothness and integrability.
To this end, for all $f\in\cs'(\rn)$, let
$$\lf\|f|\bbeve\r\|_1:=\sup_{P\in\cq}\frac1{\phi(P)}
\lf\|\lf\{2^{js(\cdot)}|\vz_j\ast f|\r\}_{j\ge0}
\r\|_{\ell^{q(\cdot)}(L^{p(\cdot)}(P))}$$
and
$$\lf\|f|\bbeve\r\|_2:=\sup_{Q\in\cq^\ast}\sup_{x\in Q}
[\phi(Q)]^{-1}|Q|^{-s(x)/n}
\|\chi_Q\|_{\vlp}|\vz_{j_Q}\ast f(x)|.$$

\begin{theorem}\label{t-equivalent-x}
Let $p,\ q,\ s,\ \phi$ be as in Definition \ref{def-b}.

{\rm (i)} Assume that $p_+\in(0,\fz)$ and $c_1\in(0,2^{n/p_+})$.
Then $f\in\bbeve$ if and only if $f\in\cs'(\rn)$ and
$\|f|\bbeve\|_1<\fz$; moreover, there exists a positive constant $C$,
independent of $f$, such that
\begin{equation}\label{sum-x}
\|f\|_{\bbeve}\le\lf\|f|\bbeve\r\|_1\le C\|f\|_{\bbeve}.
\end{equation}

{\rm (ii)} Assume that $p_-\in(0,\fz)$ and $c_1\in(0,2^{-n/p_-})$.
Then $f\in\bbeve$ if and only if $f\in\cs'(\rn)$ and
$\|f|\bbeve\|_2<\fz$; moreover, there exists a positive constant $C$,
independent of $f$, such that
\begin{equation}\label{sum-x1}
C^{-1}\|f\|_{\bbeve}\le\lf\|f|\bbeve\r\|_2\le C\|f\|_{\bbeve}.
\end{equation}
\end{theorem}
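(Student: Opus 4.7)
Both parts of the theorem will be proved by working with the equivalent quasi-norm $\|f\|^\ast_{\bbeve}$ from Theorem \ref{t-equivalent}, exploiting the fact that the Peetre maximal function $\vz_j^{\ast,a}(2^{js(\cdot)}f)$ is essentially constant on any dyadic cube of side $2^{-j}$ (a standard consequence of the triangle inequality inside the weight $(1+2^j|x-y|)^{-a}$). The ingredients will be: this constancy, the comparability of $\|\chi_Q\|_{\vlp}$ across scales via Lemma \ref{l-cube-1}, the doubling/compatibility of $\phi$ via \eqref{phi-1} and \eqref{phi-2}, and the semimodular estimates of Lemma \ref{l-inf-norm} for passing between $\vlp$-norms and $\ell^{q(\cdot)}$-summations.

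For part (i), the inequality $\|f\|_{\bbeve}\le\|f|\bbeve\|_1$ in \eqref{sum-x} is immediate because the sum on the right contains all terms in the sum on the left. The reverse bound only requires controlling those extra indices $j\in\{0,1,\dots,j_P-1\}$ that occur when $j_P\ge 1$. For such $j$, let $R=R_j$ be the unique dyadic cube with $P\subset R$ and $\ell(R)=2^{-j}$. Using the Peetre constancy on $R$ together with $2^{js(x)}|\vz_j\ast f(x)|\le\vz_j^{\ast,a}(2^{js(\cdot)}f)(x)$, one gets
\[
\|\chi_P 2^{js(\cdot)}|\vz_j\ast f|\|_{\vlp}\ls\frac{\|\chi_P\|_{\vlp}}{\|\chi_R\|_{\vlp}}\,\|\chi_R\vz_j^{\ast,a}(2^{js(\cdot)}f)\|_{\vlp}\ls\frac{\|\chi_P\|_{\vlp}}{\|\chi_R\|_{\vlp}}\,\phi(R)\|f\|^\ast_{\bbeve},
\]
where the last step applies Theorem \ref{t-equivalent} to the cube $R$ and uses monotonicity of the mixed norm with respect to truncation. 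Lemma \ref{l-cube-1} then gives $\|\chi_P\|_{\vlp}/\|\chi_R\|_{\vlp}\ls 2^{-(j_P-j)n/p_+}$, and iterating \eqref{phi-1} and using \eqref{phi-2} yields $\phi(R)\ls c_1^{j_P-j}\phi(P)$. Thus each fiber is bounded by $C(c_1/2^{n/p_+})^{j_P-j}\phi(P)\|f\|^\ast_{\bbeve}$, and the hypothesis $c_1<2^{n/p_+}$ makes the geometric series summable; passing to the $\ell^{q(\cdot)}$-summation via the semimodular formulation and Lemma \ref{l-inf-norm} then completes the proof of \eqref{sum-x}.

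For part (ii), the upper estimate $\|f|\bbeve\|_2\le C\|f\|_{\bbeve}$ in \eqref{sum-x1} follows along the same lines: for any $Q\in\cq^\ast$ and $x\in Q$, the Peetre constancy on $Q$ gives $2^{j_Q s(x)}|\vz_{j_Q}\ast f(x)|\le C c_Q$ where $c_Q:=\inf_Q\vz_{j_Q}^{\ast,a}(2^{j_Q s(\cdot)}f)$, and applying Theorem \ref{t-equivalent} to the cube $Q$ itself gives $c_Q\|\chi_Q\|_{\vlp}\le\|\chi_Q\vz_{j_Q}^{\ast,a}(2^{j_Q s(\cdot)}f)\|_{\vlp}\le\phi(Q)\|f\|^\ast_{\bbeve}$. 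Rearranging and taking the supremum over $Q$ and $x\in Q$ yields the desired inequality.

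The reverse direction $\|f\|_{\bbeve}\ls\|f|\bbeve\|_2$ in \eqref{sum-x1} is the main technical obstacle. Fix $P\in\cq$ and $j\ge j_P\vee 0$; for each subcube $Q\subset P$ with $\ell(Q)=2^{-j}$ and each $x\in Q$, the definition of $\|f|\bbeve\|_2$ gives the pointwise bound $2^{js(x)}|\vz_j\ast f(x)|\le\phi(Q)\|f|\bbeve\|_2/\|\chi_Q\|_{\vlp}$, i.e., a step-function majorant on $P$. Computing the $\vlp$-norm of this majorant through the semimodular with disjoint supports $Q$, and estimating $\phi(Q)\ls c_1^{j-j_P}\phi(P)$ from \eqref{phi-1}-\eqref{phi-2} and $\|\chi_Q\|_{\vlp}\gs |Q|^{1/p_-}$ from Lemma \ref{l-cube-1} (together with the counting $\#\{Q\subset P:\ell(Q)=2^{-j}\}=2^{n(j-j_P)}$), one obtains each fiber bound of the form
\[
\|\chi_P 2^{js(\cdot)}|\vz_j\ast f|\|_{\vlp}\ls (c_1\cdot 2^{n/p_-})^{j-j_P}\,\phi(P)\,\|f|\bbeve\|_2.
\]
Summing these in $\ell^{q(\cdot)}$ via Lemma \ref{l-inf-norm} (case-splitting on whether the fiber norm is $\le 1$ or $>1$ to extract $q_\pm$ powers) produces a geometric series whose convergence is precisely ensured by the hypothesis $c_1\in(0,2^{-n/p_-})$. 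The main obstacle will be carrying the variable exponents $p(\cdot),q(\cdot)$ through these semimodular estimates cleanly, and managing the location-dependent factors $(1+|k|)^{n(1/p_--1/p_+)}$ from Lemma \ref{l-cube-1} so that the final bound depends only on quantities local to $P$; this is handled by restricting attention to $Q\subset P$ (so that all $|k_Q|$ are comparable) and applying the compatibility condition \eqref{phi-2} to absorb the location dependence.
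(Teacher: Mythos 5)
Your overall architecture is the same as the paper's: the trivial inequality $\|f\|_{\bbeve}\le\|f|\bbeve\|_1$, the treatment of the extra indices $j<j_P$ in (i) via the unique ancestor cube $P_j\supset P$ with $\ell(P_j)=2^{-j}$ and the infimum of the Peetre maximal function over $P_j$, the upper bound in (ii) via the same infimum trick on $Q$ itself, and the lower bound in (ii) via the step-function majorant $\sum_{Q\subset P,\,\ell(Q)=2^{-j}}\phi(Q)\|\chi_Q\|_{\vlp}^{-1}\chi_Q$, with the hypotheses $c_1<2^{n/p_+}$ and $c_1<2^{-n/p_-}$ entering exactly through the convergence of the resulting geometric series after the $\ell^{q(\cdot)}$ summation is handled by Lemma \ref{l-inf-norm}. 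The exponents you compute, $(c_1 2^{-n/p_+})^{j_P-j}$ and $(c_1 2^{n/p_-})^{j-j_P}$, agree with the paper's.

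There is, however, one concrete gap: the nested-cube comparison of $L^{p(\cdot)}$-norms cannot be extracted from Lemma \ref{l-cube-1} in the way you describe. Lemma \ref{l-cube-1} carries the location factors $(1+|k|)^{n(1/p_\pm-1/p_\mp)}$, and dividing the upper bound for $\|\chi_P\|_{\vlp}$ by the lower bound for $\|\chi_R\|_{\vlp}$ produces $2^{-nj_P/p_+ + nj/p_-}$ times an uncancelled product of such factors, not the clean $2^{-(j_P-j)n/p_+}$ you need. Your proposed remedy for (ii) --- that all $|k_Q|$ for $Q\subset P$, $\ell(Q)=2^{-j}$, are comparable --- is false when $P$ touches the origin (e.g.\ for $P=[0,1)^n$ the indices $k_Q$ range over $\{0,\dots,2^j-1\}^n$), and \eqref{phi-2} concerns $\phi$, not $\|\chi_Q\|_{\vlp}$, so it cannot absorb this. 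What is actually needed is the relative estimate for nested dyadic cubes, $\|\chi_Q\|_{\vlp}\gs (|Q|/|P|)^{1/p_-}\|\chi_P\|_{\vlp}$ for $Q\subset P$ (and its counterpart with $p_+$ for $P\subset R$); this is a separate fact, proved via log-H\"older continuity, which the paper imports as \cite[Lemma 2.6]{zyl14}. With that substitution your argument goes through; without it, the step-function majorant in (ii) and the ratio $\|\chi_P\|_{\vlp}/\|\chi_{P_j}\|_{\vlp}$ in (i) are not controlled, and a naive use of Lemma \ref{l-cube-1} would degrade the exponent in the geometric series beyond what the hypotheses on $c_1$ cover.
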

\begin{proof}
Let $P\subset\rn$ be a given dyadic cube and, for all $j\in\zz_+$ and $x\in\rn$,
$f_j(x):=2^{js(x)}|\vz_j\ast f(x)|$.

We first prove (i). Let $f\in\cs'(\rn)$ and
$\|f|\bbeve\|_1<\fz$. Then, by definitions, we easily find that
$\|f\|_{\bbeve}\le\|f|\bbeve\|_1$ and hence $f\in \bbeve$.
Conversely, let $f\in\bbeve$. Then $f\in\cs'(\rn)$.
To complete the proof of (i), it suffices to show the second inequality
of \eqref{sum-x}.

When $q_+\in(0,\fz)$, by Remark \ref{re-mixed}(iv), we have
\begin{eqnarray}\label{sum-y}
\qquad&&\frac1{\phi(P)}\lf\|\lf\{f_j\r\}_{j\ge0}
\r\|_{\ell^{q(\cdot)}(L^{p(\cdot)}(P))}\\
&&\hs\ls\frac1{\phi(P)}\lf\|\lf\{f_j\r\}_{j=0}^{(j_P\vee0)-1}
\r\|_{\ell^{q(\cdot)}(L^{p(\cdot)}(P))}
+\frac1{\phi(P)}\lf\|\lf\{f_j\r\}_{j\ge(j_P\vee0)}
\r\|_{\ell^{q(\cdot)}(L^{p(\cdot)}(P))}=:{\rm I}_{P,1}+{\rm I}_{P,2},\noz
\end{eqnarray}
where ${\rm I}_{P,1}=0$ if $j_P\le0$. Obviously,
${\rm I}_{P,2}\ls\|f\|_{\bbeve}$.
To estimate ${\rm I}_{P,1}$,
without loss of generality, we may assume that
 $\|f\|_{\bbeve}=1$ and show that ${\rm I}_{P,1}\ls1$
in the case that $j_P>0$.
Observe that, for all $j\in\zz_+$ with $j\le j_P-1$, there
exists a unique dyadic cube $P_j$ such
that $P\subset P_j$ and $\ell(P_j)=2^{-j}$. It follows that, for all $x\in P$,
\begin{equation}\label{5.15x}
f_j(x):=2^{js(x)}|\vz_j\ast f(x)|\ls\inf_{y\in P_j}\vz_j^{\ast,a}(2^{js(\cdot)}f)(y)
\end{equation}
and, moreover,
\begin{eqnarray}\label{sum-z}
\lf\|[\phi(P)]^{-1}\chi_Pf_j\r\|_{\vlp}
&&\ls\lf\|[\phi(P)]^{-1}\chi_P\inf_{y\in P_j}\vz_j^{\ast,a}(2^{js(\cdot)}f)(y)
\r\|_{\vlp}\\
&&\ls\frac{\|\chi_P\|_{\vlp}}{\|\chi_{P_j}\|_{\vlp}}[\phi(P)]^{-1}
\|\vz_j^{\ast,a}(2^{js(\cdot)}f)\|_{L^{p(\cdot)}(P_j)}\noz\\
&&\ls\|f\|_{\bbeve}\frac{\|\chi_P\|_{\vlp}}{\|\chi_{P_j}\|_{\vlp}}
\frac{\phi(P_j)}{\phi(P)},\noz
\end{eqnarray}
where we used Theorem \ref{t-equivalent} in the last inequality.
On the other hand, by \cite[Lemma 2.6]{zyl14}, we find that
$$\|\chi_{P_j}\|_{\vlp}\gs 2^{-j\frac n{p_+}}2^{j_P\frac n{p_+}}
\|\chi_P\|_{\vlp}$$
and, by \eqref{phi-1} and \eqref{phi-2}, we see that
$\phi(P)\gs 2^{j\log_2c_1}2^{-j_P\log_2c_1}\phi(c_{P_j},2^{-j})$. Thus, by
\eqref{sum-z}, we further conclude that
\begin{eqnarray}\label{sum-w}
\lf\|[\phi(P)]^{-1}\chi_Pf_j\r\|_{\vlp}
\ls\|f\|_{\bbeve}2^{(j-j_P)(\frac n{p_+}-\log_2c_1)},
\end{eqnarray}
which, together with (i) and (ii) of Lemma \ref{l-inf-norm}, implies that
\begin{eqnarray*}
&&\inf\lf\{\lz_j:\ \varrho_{p(\cdot)}
\lf([\phi(P)]^{-1}\lz_j^{-1/q(\cdot)}\chi_Pf_j\r)\le1\r\}\\
&&\hs\ls\lf\|[\phi(P)]^{-1}\chi_Pf_j\r\|_{\vlp}^{q_-}
+\lf\|[\phi(P)]^{-1}\chi_Pf_j\r\|_{\vlp}^{q_+}\\
&&\hs\ls2^{(j-j_P)(\frac n{p_+}-\log_2c_1)q_-}+2^{(j-j_P)(\frac n{p_+}-\log_2c_1)q_+}.
\end{eqnarray*}
From this and $c_1\in(0,2^{n/p_+})$, we deduce that there exists a
positive constant $C_0$ such that
$$\sum_{j=0}^{j_P-1}\inf\lf\{\lz_j:\ \varrho_{p(\cdot)}
\lf(\frac{\chi_Pf_j}{C_0\phi(P)\lz_j^{1/q(\cdot)}}\r)\le1\r\}\le1,$$
namely,
$\varrho_{\ell^{q(\cdot)}(L^{p(\cdot)})}
(\lf\{[C_0\phi(P)]^{-1}\chi_Pf_j\r\}_{j=0}^{j_P-1})\le1$,
which, combined with Remark \ref{re-mixed}(i),
implies that ${\rm I}_{P,1}\ls1$. Therefore, by \eqref{sum-y}, we find that
$$\lf\|f|\bbeve\r\|_1\ls\sup_{P\in\cq}({\rm I}_{P,1}+{\rm I}_{P,2})
\ls\|f\|_{\bbeve},$$
which completes the proof of the second inequality of \eqref{sum-x}
in the case $q_+\in(0,\fz)$.

We now consider the case that $q_+=\fz$. In this case,
$q\equiv\fz$ by Remark \ref{re-conv}(iii).
From \eqref{sum-w} and $c_1\in(0,2^{n/p_+})$, we deduce that, for $j_P\in\nn$,
\begin{eqnarray*}
\sup_{j\in\zz_+, j\le j_P}[\phi(P)]^{-1}\lf\|f_j\r\|_{L^{p(\cdot)}(P)}
\ls\|f\|_{\bbeve}\sup_{j\in\zz_+, j\le j_P}
2^{(j-j_P)(\frac n{p_+}-\log_2c_1)}
\ls\|f\|_{\bbeve}.\noz
\end{eqnarray*}
By this, we know that
\begin{eqnarray*}
\lf\|f|B_{p(\cdot),\fz}^{s(\cdot),\phi}(\rn)\r\|_1
&&\ls\sup_{P\in\cq}\lf\{\sup_{j\in\zz_+,j\le(j_P\vee0)}
\frac{\|f_j\|_{L^{p(\cdot)}(P)}}{\phi(P)}
+\sup_{j\in\zz_+,j\ge (j_P\vee0)}
\frac{\|f_j\|_{L^{p(\cdot)}(P)}}{\phi(P)}\r\}\\
&&\ls\|f\|_{B_{p(\cdot),\fz}^{s(\cdot),\phi}(\rn)},
\end{eqnarray*}
which completes the proof of the second inequality of \eqref{sum-x}
in the case that $q_+=\fz$ and hence (i) of Theorem \ref{t-equivalent-x}.

Next, we show (ii). Let $f\in\bbeve$. Then $f\in\cs'(\rn)$.
On the other hand, for all $Q\in\cq^\ast$ and $x\in Q$,
by Theorem \ref{t-equivalent} and \eqref{5.15x}, we easily see that
\begin{eqnarray*}
\frac{\|\chi_Q\|_{\vlp}}{\phi(Q)}f_{j_Q}(x)
&&\ls\frac{\|\chi_Q\|_{\vlp}}{\phi(Q)}
\inf_{y\in Q}\vz_{j_Q}^{\ast,a}(2^{j_Qs(\cdot)}f)(y)\\
&&\ls[\phi(Q)]^{-1}\|\vz_{j_Q}^{\ast,a}(2^{j_Qs(\cdot)}f)\|_{L^{p(\cdot)}(Q)}
\ls\|f\|_{\bbeve}.\noz
\end{eqnarray*}
This implies that $\|f|\bbeve\|_2\ls\|f\|_{\bbeve}<\fz$.

Conversely, let $f\in\cs'(\rn)$ and $\|f|\bbeve\|_2<\fz$.
We need to show the first inequality of \eqref{sum-x1}. To this end,
for all $j\ge (j_P\vee0)$ and $x\in\rn$,
$\cq_{P,j}^\ast:=\{Q\in\cq^\ast:\ Q\subset P,\ \ell(Q)=2^{-j}\}$
and, for all $Q\in \cq_{P,j}^\ast$, let
 $$g(Q,P)(x):=[\phi(P)]^{-1}\phi(Q)\|\chi_Q\|_{\vlp}^{-1}
\chi_{ Q}(x).$$
 When $q_+\in(0,\fz)$, by \cite[Lemma 2.6]{zyl14},
\eqref{phi-1} and \eqref{phi-2}, we find that
\begin{eqnarray*}
&&\lf\|\sum_{Q\in\cq_{P,j}^\ast}g(Q,P)\r\|_{\vlp}
\ls2^{(j-j_P)(\log_2c_1+\frac n{p_-})},
\end{eqnarray*}
which, combined with (i) and (ii) of Lemma \ref{l-inf-norm}, implies that
\begin{eqnarray*}
&&\varrho_{\ell^{q(\cdot)}(L^{p(\cdot)})}
\lf(\lf\{\sum_{Q\in\cq_{P,j}^\ast}g(Q,P)\r\}\r)\\
&&\hs=\sum_{j=(j_P\vee0)}^\fz\inf\lf\{\lz_j\in(0,\fz):\ \varrho_{p(\cdot)}
\lf(\sum_{Q\in\cq_{P,j}^\ast}\frac{g(Q,P)}
{\lz_j^{1/q(\cdot)}}\r)\le1\r\}\\
&&\hs\le\sum_{j=(j_P\vee0)}^\fz\lf[\lf\|\sum_{Q\in\cq_{P,j}^\ast}g(Q,P)\r\|_{\vlp}^{q_-}+
\lf\|\sum_{Q\in\cq_{P,j}^\ast}g(Q,P)\r\|_{\vlp}^{q_+}\r]\\
&&\hs\ls\sum_{j=(j_P\vee0)}^\fz\lf[2^{(j-j_P)(\log_2c_1+\frac n{p_-})q_-}
+2^{(j-j_P)(\log_2c_1+\frac n{p_-})q_+}\r]\ls1.
\end{eqnarray*}
By this and Remark \ref{re-mixed}(i), we conclude that
\begin{eqnarray*}
\lf\|\lf\{\sum_{Q\in\cq_{P,j}^\ast}g(Q,P)\r\}_{j\ge(j_P\vee0)}\r\|
_{\ell^{q(\cdot)}(L^{p(\cdot)}(P))}\ls1.
\end{eqnarray*}
Therefore,
\begin{eqnarray*}
&&\lf\|\lf\{[\phi(P)]^{-1}\chi_Pf_j\r\}_{j\ge(j_P\vee0)}
\r\|_{\ell^{q(\cdot)}(L^{p(\cdot)}(P))}\\
&&\hs\ls\lf\|\lf\{\frac{1}{\phi(P)}\sum_{Q\in\cq_{P,j}^\ast}
\chi_Qf_j\r\}_{j\ge(j_P\vee0)}
\r\|_{\ell^{q(\cdot)}(L^{p(\cdot)}(P))}\\
&&\hs\ls\lf\|f|\bbeve\r\|_2\lf\|\lf\{\sum_{Q\in\cq_{P,j}^\ast}g(Q,P)
\r\}_{j\ge(j_P\vee0)}\r\|
_{\ell^{q(\cdot)}(L^{p(\cdot)}(P))}\ls\lf\|f|\bbeve\r\|_2,
\end{eqnarray*}
which implies that the first inequality of \eqref{sum-x1} holds true
in the case that $q_+\in(0,\fz)$.
The proof of the case that $q_+=\fz$ is similar
and more simple,
the details being omitted. This finishes the proof of (ii) and hence Theorem \ref{t-equivalent-x}.
\end{proof}
As another application of Theorem \ref{t-equivalent}, we obtain the following
conclusion.
\begin{proposition}\label{p-sbs}
Let $p,\ q,\ s$ and $\phi$ be as in Definition \ref{def-b}. Then
\begin{equation}\label{embed1}
\cs(\rn)\hookrightarrow\bbeve\hookrightarrow\cs'(\rn).
\end{equation}
\end{proposition}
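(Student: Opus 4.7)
The plan is to prove the two embeddings of \eqref{embed1} separately. The second embedding $\bbeve\hookrightarrow\cs'(\rn)$ will follow almost immediately from the machinery developed in Section \ref{s-trans}. Specifically, given $f\in\bbeve$, I will apply Theorem \ref{t-transform} to obtain $\|S_\vz f\|_{\beve}\ls\|f\|_{\bbeve}$ together with the reproducing identity $f=T_\psi(S_\vz f)$ in $\cs'(\rn)$, and then combine this with the continuity estimate $|\la T_\psi t,h\ra|\ls\|t\|_{\beve}\|h\|_{\cs_M(\rn)}$ (with some fixed $M\in\nn$) supplied by Lemma \ref{l-welld} to obtain $|\la f,h\ra|\ls\|f\|_{\bbeve}\|h\|_{\cs_M(\rn)}$ for every $h\in\cs(\rn)$, which is precisely the desired continuous embedding into $\cs'(\rn)$.

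For the first embedding $\cs(\rn)\hookrightarrow\bbeve$, I will fix $f\in\cs(\rn)$ and first establish the standard pointwise decay estimate: since $\wh\vz$ vanishes in a neighbourhood of the origin, $\vz$ possesses vanishing moments of every order, and a Taylor-expansion argument combined with the Schwartz decay of $f$ yields, for every $L,\wz M\in\nn$, a constant $C$ depending only on finitely many Schwartz seminorms of $f$ such that
$$|\vz_j\ast f(x)|\le C 2^{-jL}(1+|x|)^{-\wz M},\qquad j\in\nn,\ x\in\rn,$$
together with $|\Phi\ast f(x)|\le C(1+|x|)^{-\wz M}$ for all $x\in\rn$. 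Combined with $s\in L^\fz(\rn)$, this furnishes the pointwise bound $2^{js(x)}|\vz_j\ast f(x)|\ls 2^{-j(L-s_+)}(1+|x|)^{-\wz M}$ for all $j\in\zz_+$ and $x\in\rn$.

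Using this, for any dyadic cube $P=Q_{j_P,k_P}$ I will bound $\|\chi_P 2^{js(\cdot)}|\vz_j\ast f|\|_{\vlp}$ by combining the inequality $(1+|x|)^{-\wz M}\le C(1+\mrm{dist}(0,P))^{-\wz M}$ for $x\in P$ (for cubes away from the origin) with Lemma \ref{l-cube-1} controlling $\|\chi_P\|_{\vlp}$, and falling back on the crude uniform bound $\|(1+|\cdot|)^{-\wz M}\|_{\vlp}<\fz$ (valid for $\wz M$ large enough) for cubes near the origin. Summing in $j$ via an $r$-convexification of $\ell^{q(\cdot)}(\vlp)$ with $r\in(0,\min\{p_-,q_-\})\cap(0,1]$ together with a direct modular estimate will produce a factor decaying exponentially in $j$ times a polynomial in $2^{|j_P|}$ and $1+|k_P|$. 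Dividing by $\phi(P)$ and invoking the lower bound
$$\phi(Q_{j_P,k_P})\gs 2^{-|j_P|\log_2 c_1}(1+|k_P|)^{-2\log_2 c_1},$$
which is obtained by chaining \eqref{phi-1} and \eqref{phi-2} in exactly the same manner as in the proof of Lemma \ref{l-esti-cube} (but arranged to produce a lower bound in place of the upper one), I will conclude that, for $L$ and $\wz M$ chosen sufficiently large, the supremum defining $\|f\|_{\bbeve}$ is controlled by a Schwartz seminorm of $f$.

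The main obstacle will be the case analysis in this last step: the estimates on $\|\chi_P\|_{\vlp}$, on the decay of $(1+|x|)^{-\wz M}$ over $P$, and on $[\phi(P)]^{-1}$ all carry different polynomial factors in $2^{|j_P|}$ and $1+|k_P|$ depending on whether $P$ is large ($j_P\le 0$) or small ($j_P>0$) and whether it lies near or far from the origin. Since all such unfavourable factors are polynomial, however, they will be uniformly absorbed by choosing $L$ and $\wz M$ large enough, yielding $\|f\|_{\bbeve}\ls\|f\|_{\cs_{M'}(\rn)}$ for some $M'\in\nn$ and completing the proof.
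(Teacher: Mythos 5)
Your proposal is correct, and for the second embedding it takes a genuinely different (and arguably slicker) route than the paper. The paper first invokes Proposition \ref{p-embed1} to reduce \eqref{embed1} to the two embeddings $\cs(\rn)\hookrightarrow B_{p(\cdot),q_-}^{s(\cdot),\phi}(\rn)$ and $B_{p(\cdot),\fz}^{s(\cdot),\phi}(\rn)\hookrightarrow\cs'(\rn)$, and then proves the latter by hand: it pairs $f$ with $h$ via the Calder\'on reproducing formula \eqref{cz1}, estimates $\int_{Q_{0k}}|\vz_j\ast f|$ by $\inf_{y\in Q_{jk}}\vz_j^{\ast,a}(2^{js(\cdot)}f)(y)$, and then uses the Peetre maximal characterization (Theorem \ref{t-equivalent}) together with Lemmas \ref{l-esti-cube} and \ref{l-cube-1} to sum over $j$ and $k$. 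You instead observe that the second embedding is already implicit in the $\vz$-transform machinery: $f=T_\psi(S_\vz f)$, $\|S_\vz f\|_{\beve}\ls\|f\|_{\bbeve}$ by Theorem \ref{t-transform}, and $|\la T_\psi t,h\ra|\ls\|t\|_{\beve}\|h\|_{\cs_M(\rn)}$ by (the proof of) Lemma \ref{l-welld}; chaining these gives the continuity into $\cs'(\rn)$ with no new estimates, at the price of not yielding the slightly stronger statement $B_{p(\cdot),\fz}^{s(\cdot),\phi}(\rn)\hookrightarrow\cs'(\rn)$ that the paper's route produces. For the first embedding the paper simply omits the details (referring to an analogue in \cite{yyz14}), and your direct argument --- vanishing moments of $\vz$ plus Schwartz decay giving $|\vz_j\ast f(x)|\ls2^{-jL}(1+|x|)^{-\wz M}$, then Lemma \ref{l-cube-1} for $\|\chi_P\|_{\vlp}$, a modular/$r$-convexification summation in $j$, and the two-sided polynomial bounds on $\phi(P)$ coming from \eqref{phi-1} and \eqref{phi-2} --- is exactly the kind of argument being alluded to; since every unfavourable factor is polynomial in $2^{|j_P|}$ and $1+|k_P|$ while $L$ and $\wz M$ are at your disposal, the bookkeeping closes as you describe.
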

\begin{proof}
By Proposition \ref{p-embed1}, we see that
$B_{p(\cdot),q_-}^{s(\cdot),\phi}(\rn)
\hookrightarrow\bbeve\hookrightarrow B_{p(\cdot),\fz}^{s(\cdot),\phi}(\rn).$
Thus, to prove \eqref{embed1}, it suffices to show that
\begin{equation}\label{embed2}
\cs(\rn)\hookrightarrow B_{p(\cdot),q_-}^{s(\cdot),\phi}(\rn)\quad
{\rm and}\quad
B_{p(\cdot),\fz}^{s(\cdot),\phi}(\rn)\hookrightarrow\cs'(\rn).
\end{equation}
The first embedding of \eqref{embed2} can be obtained by an argument similar to
that used in the proof of \cite[Proposition 3.20]{yyz14}, the details being omitted.
Next we give the proof of the second one. To this end,
we only need to show that there exists an $M\in\nn$ such that, for
all $f\in B_{p(\cdot),\fz}^{s(\cdot),\phi}(\rn)$ and $h\in\cs(\rn)$,
$|\langle f,h\rangle|
\ls\|h\|_{\cs_{M+1}(\rn)}\|f\|_{B_{p(\cdot),\fz}^{s(\cdot),\phi}(\rn)}.$

Let $\vz$, $\psi$, $\Phi$ and $\Psi$ be as in \eqref{cz1}. Then, by the Calder\'on
reproducing formula in \cite[Lemma 2.3]{ysiy}, together with
\cite[Lemma 2.4]{ysiy}, we find that
\begin{eqnarray}\label{embed3}
|\langle f,h\rangle|
&&\le\int_\rn|\Phi\ast f(x)||\Psi\ast h(x)|\,dx
+\sum_{j=1}^\fz\int_\rn|\vz_j\ast f(x)||\psi_j\ast h(x)|\,dx\\
&&\ls\|h\|_{\cs_{M+1}(\rn)}\sum_{j=0}^\fz2^{-jM}\sum_{k\in\zz^n}
\int_{Q_{0k}}|\vz_j\ast f(x)|(1+|x|)^{-(n+M)}\,dx,\noz
\end{eqnarray}
where we used $\vz_0$ to replace $\Phi$. Notice that, for any $j\in\zz_+$,
$k\in\zz^n$, $a\in(0,\fz)$ and $y\in Q_{jk}$,
\begin{eqnarray*}
\int_{Q_{0k}}|\vz_j\ast f(x)|\,dx
&&\ls\vz_j^{\ast,a}(2^{js(\cdot)}f)(y)\int_{Q_{0k}}2^{-js(x)}(1+2^j|x|+2^j|y|)^a\,dx\\
&&\ls2^{-js_-}\vz_j^{\ast,a}(2^{js(\cdot)}f)(y)2^{ja}(1+|k|)^a.
\end{eqnarray*}
It follows that
$$\int_{Q_{0k}}|\vz_j\ast f(x)|\,dx
\ls2^{j(a-s_-)}(1+|k|)^a\inf_{y\in Q_{jk}}\vz_j^{\ast,a}(2^{js(\cdot)}f)(y),$$
which, combined with \eqref{embed3}, Theorem \ref{t-equivalent},
Lemmas \ref{l-esti-cube} and \ref{l-cube-1}, implies that
\begin{eqnarray*}
|\langle f,h\rangle|
&&\ls\|h\|_{\cs_{M+1}(\rn)}\sum_{j=0}^\fz2^{-j(M+s_--a)}\sum_{k\in\zz^n}
(1+|k|)^{a-n-M}\frac{\|\vz_j^{\ast,a}(2^{js(\cdot)}f)\|_{L^{p(\cdot)}(Q_{jk})}}
{\|\chi_{Q_{jk}}\|_{\vlp}}\\
&&\ls\|h\|_{\cs_{M+1}(\rn)}\|f\|_{B_{p(\cdot),\fz}^{s(\cdot),\phi}(\rn)}
\sum_{j=0}^\fz\sum_{k\in\zz^n}\frac{2^{-j(M+s_--a)}}{(1+|k|)^{M+n-a}}
\frac{\phi(Q_{jk})}{\|\chi_{Q_{jk}}\|_{\vlp}}\\
&&\ls\|h\|_{\cs_{M+1}(\rn)}\|f\|_{B_{p(\cdot),\fz}^{s(\cdot),\phi}(\rn)},
\end{eqnarray*}
where $a$ is as in Theorem \ref{t-equivalent} and $M$ is large enough.
This finishes the proof of Proposition \ref{p-sbs}.
\end{proof}
\begin{remark}
(i) When $\phi\equiv1$, Proposition \ref{p-sbs} was proved in
 \cite[Theorem 6.10]{ah10}.

(ii) When $p,\ q,\ s$ and $\phi$ are as in Remark \ref{re-conv}(ii), Proposition
\ref{p-sbs} was obtained in \cite[Proposition 2.3]{ysiy}.
\end{remark}
Next we establish the atomic characterization of $\bbeve$.

\begin{definition}
Let $k\in\zz_+$ and $L\in\zz$. A measurable function $a_Q$ on $\rn$
is called a ($K,L$)-\emph{smooth atom} supported near $Q:=Q_{jk}\in\cq$ if it
satisfies the following conditions:

(A1) (\emph{support condition}) supp\,$a_Q\subset 3Q$;

(A2) (\emph{vanishing moment}) when $j\in\nn$, $\int_\rn x^\gamma a_Q(x)\,dx=0$
for all $\gamma\in\zz_+^n$ with $|\gamma|<L$;

(A3) (\emph{smoothness condition}) for all multi-indices $\alpha\in\zz_+^n$
with $|\alpha|\le K$,
$|D^\alpha a_Q(x)|\le 2^{(|\alpha|+n/2)j}$.

A collection $\{a_Q\}_{Q\in\cq^\ast}$ is called \emph{a family of ($K,\,L$) smoothness
atoms}, if each $a_Q$ is a ($K,\,L$)-smooth atom supported near $Q$.
\end{definition}

We point out that, if $L\le0$, then the vanishing moment condition (A2) is avoid.

\begin{theorem}\label{t-atomd}
Let $p,\ q,\ s$ and $\phi$ be as in Definition \ref{def-b}.

{\rm(i)} Let $K\in( s_++\log_2 c_1,\fz)$ and
\begin{equation}\label{atomd1}
 L\in\lf( n/{\min\{1,p_-\}}-n-s_-,\fz\r).
\end{equation}
 Suppose that $\{a_Q\}_{Q\in\cq^\ast}$
is a family of $(K,\,L)$-smooth atoms and
$t:=\{t_Q\}_{Q\in\cq^\ast}\in\beve.$
Then $f:=\sum_{Q\in\cq^\ast}t_Qa_Q$ converges in $\cs'(\rn)$ and
$\|f\|_{\bbeve}\le C\|t\|_{\beve}$
with $C$ being a positive constant independent of $t$.

{\rm (ii)} Conversely, if $f\in \bbeve$, then, for any given $K,\ L\in\zz_+$,
there exist sequences $t:=\{t_Q\}_{Q\in\cq^\ast}\subset \cc$ and
$\{a_Q\}_{Q\in\cq^\ast}$ of $(K,\,L)$-smooth atoms such that
$f=\sum_{Q\in\cq^\ast}t_Qa_Q$ in $\cs'(\rn)$ and
$\|t\|_{\beve}\le C\|f\|_{\bbeve}$
with $C$ being a positive constant independent of $f$.
\end{theorem}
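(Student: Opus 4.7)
The plan is to prove Theorem \ref{t-atomd} in two directions, mirroring the standard approach for atomic decompositions of Besov-type spaces but adapted to the variable-exponent setting via the semimodular machinery and the Peetre maximal function estimates already developed.

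For part (i), I would first verify convergence of $f:=\sum_{Q\in\cq^\ast}t_Qa_Q$ in $\cs'(\rn)$. The conditions (A1)--(A3) imply $\|a_Q\|_{\cs_M(\rn)}\ls |Q|^{-M/n}$ uniformly for $Q\in\cq^\ast$ of fixed scale, and the pointwise bound on $|t_Q|$ from the proof of Lemma \ref{l-welld} plus $K,L$ large enough relative to the Schwartz semi-norms would give $|\la f,h\ra|\ls\|t\|_{\beve}\|h\|_{\cs_{M+1}(\rn)}$. The main work is to control $\|f\|_{\bbeve}$, for which I would establish the pointwise estimate
\begin{equation*}
|\vz_j\ast a_Q(x)|\ls 2^{-|j-\nu|N}\,2^{-\nu n/2}\,\eta_{j\wedge \nu,M}(x-x_Q),
\qquad Q\in\cq^\ast,\ \ell(Q)=2^{-\nu},
\end{equation*}
with $N>0$ as large as needed: when $j\ge\nu$ use (A3) together with the standard $\vz_j$-smoothness argument, and when $j<\nu$ transfer derivatives onto $\vz_j$ via (A2), exploiting that $K>s_++\log_2c_1$ gives room above and \eqref{atomd1} gives room below. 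Summing in $Q$ and splitting into $\nu\le j$ and $\nu>j$, I would then run the semimodular argument of Lemma \ref{l-equi} (controlling the contribution of far dyadic shells by $\sum_{k\ge1}2^{-k\vez}$) together with Theorem \ref{t-equivalent} to pass from the Peetre-type convolution $\eta_{j\wedge\nu,M}\ast(\sum_{\ell(Q)=2^{-\nu}}|t_Q|\chi_Q)$ to the sequence-space quantity defining $\|t\|_{\beve}$. The two remaining aggregations are the $\nu$-sum (handled by Lemma \ref{l-hardy}, whose geometric weight $2^{-|j-\nu|N}$ we have arranged) and the passage across the mixed Lebesgue-sequence norm, which uses Lemma \ref{l-conv-ineq} in combination with Remark \ref{re-norm} after first reducing to the range $p_-,q_-\ge 1$ via the Sobolev embedding Theorem \ref{t-sobolev}; the choice of $L$ dictated by \eqref{atomd1} is exactly what is needed to absorb the loss in $p_-$.

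For part (ii), I would invoke the $\vz$-transform characterization Theorem \ref{t-transform}: choose Schwartz pairs $(\vz,\psi)$ and $(\Phi,\Psi)$ satisfying \eqref{cz1}, so that $f=T_\psi S_\vz f=\sum_{Q\in\cq^\ast,\,\ell(Q)=1}(S_\vz f)_Q\Psi_Q+\sum_{\ell(Q)<1}(S_\vz f)_Q\psi_Q$ in $\cs'(\rn)$, with $\|S_\vz f\|_{\beve}\ls\|f\|_{\bbeve}$. The functions $\psi_Q$ and $\Psi_Q$ already have all vanishing moments and all smoothness, but they are not compactly supported. To turn them into $(K,L)$-smooth atoms I would apply a standard shell-decomposition: fix a smooth partition of unity $\{\chi^{(l)}\}_{l\in\zz_+}$ adapted to $3Q,\,3^2Q,\dots$, decompose $\psi_Q=\sum_{l\in\zz_+}\psi_Q^{(l)}$ with $\supp\psi_Q^{(l)}\subset 3^{l+1}Q$, then project each $\psi_Q^{(l)}$ onto the orthogonal complement of polynomials of degree $<L$ in a suitably enlarged cube (subtracting a finite combination of translated bump polynomials) so that each corrected piece has the vanishing moment property, and finally translate the support back into a dyadic cube $Q^{(l)}$ at the same scale as $Q$ but displaced by $\sim 3^l$. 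The resulting $a_{Q^{(l)}}:=c\,2^{l\sigma}\psi_Q^{(l)}$ satisfy (A1)--(A3) after the appropriate rescaling, and the new coefficients $t_{Q^{(l)}}$ inherit a geometric decay $2^{-l\sigma}$ from the fast decay of $\psi_Q$, which combined with Lemma \ref{l-esti-cube} gives $\|\{t_{Q^{(l)}}\}\|_{\beve}\ls\|S_\vz f\|_{\beve}\ls\|f\|_{\bbeve}$.

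The main obstacle, in my view, is the part (i) estimate when $p_-<1$ or $q_-<1$: the natural convolution with $\eta_{j,M}$ is only bounded on $\ell^{q(\cdot)}(L^{p(\cdot)})$ for $p_-,q_-\ge 1$, so the argument must be set up so that the exponents we eventually convolve against are lifted into the admissible range. This is exactly where the Sobolev embedding Theorem \ref{t-sobolev} together with the margin of freedom in \eqref{atomd1} is used: by moving to a larger $\wz p(\cdot)\ge 1$ with compensating $\wz s(\cdot)=s(\cdot)-n/p(\cdot)+n/\wz p(\cdot)$, one reduces to the case where Lemma \ref{l-conv-ineq} directly applies, and the vanishing moment order $L$ is chosen large enough so that the residual $2^{-|j-\nu|N}$ still dominates the loss $2^{\nu[n/p_--n]}$ incurred by the change of exponent. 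Once the estimate in the admissible regime is in hand, the remainder of the proof is an orchestration of Lemmas \ref{l-hardy}, \ref{l-conv-ineq}, \ref{l-esti-cube} and Theorem \ref{t-equivalent}.
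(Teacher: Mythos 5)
Your part (i) follows essentially the same route as the paper: the convolution estimates for $\vz_j\ast a_Q$ (Lemma \ref{l-atom}), a split of the scales $v$ into $v<(j_P\vee0)$, $(j_P\vee0)\le v\le j$ and $v>j$, the Hardy-type inequality (Lemma \ref{l-hardy}), the convolution inequality (Lemma \ref{l-conv-ineq}) after the $r$-power reduction, and the $\delta_j^P$-semimodular bookkeeping in the style of Lemma \ref{l-equi} to handle the $\phi(P)$-localization. Two corrections of emphasis: the reduction to admissible exponents in the norm estimate is achieved by the identity in Remark \ref{re-norm} (raising to the power $r$), not by the Sobolev embedding; the embedding of Proposition \ref{p-se-embed} enters only in proving the convergence of $\sum_{Q}t_Qa_Q$ in $\cs'(\rn)$, where the margin in \eqref{atomd1} is used to produce a shifted smoothness $\wz s$ with $\wz s_->-L$. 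Also, Theorem \ref{t-equivalent} plays no role here; the maximal-function input is Lemma \ref{l-hlo} inside the Lemma \ref{l-equi}-type argument.

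Part (ii) is where your proposal genuinely diverges and where there is a gap. The paper (following Frazier--Jawerth and \cite[Theorem 3.3]{ysiy}) starts from a Calder\'on reproducing formula whose synthesis kernel is chosen \emph{compactly supported} with as many vanishing moments as desired; the atoms $a_Q(x):=t_Q^{-1}\int_Q\theta_{j_Q}(x-y)\,\vz_{j_Q}\ast f(y)\,dy$ are then automatically supported in $3Q$ with the required moments, and the coefficient bound reduces to the $t^{\ast}_{r,\lz}$-estimate of Lemma \ref{l-estimate1}. Your alternative, which keeps the non-compactly-supported $\psi_Q$ and tries to chop them into shells with moment corrections, does not work as described. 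First, the piece $\psi_Q^{(l)}$ is supported in $3^{l+1}Q$, a set of diameter $\sim 3^{l}\ell(Q)$, so it cannot serve as a single atom at scale $\ell(Q)$; it must be subdivided into $\sim 3^{ln}$ pieces of side $\ell(Q)$, each of which again loses the vanishing moments, so the polynomial correction has to be iterated while simultaneously preserving (A1) and (A3) -- none of which is routine. Second, one cannot ``translate the support back into a dyadic cube'' without changing the function; what is actually required is a re-indexing of coefficients onto cubes displaced by $\sim 3^{l}$, and summing the resulting family back into $\beve$ needs the shifted-cube almost-diagonal estimates (Lemma \ref{l-esti-cube} alone is insufficient; one needs the full strength of Lemma \ref{l-estimate1}), which your sketch does not invoke. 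As written, part (ii) is therefore not a proof; replacing the $\vz$-transform synthesis by the compactly supported Calder\'on kernel removes the obstruction entirely.
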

\begin{remark}
(i) Even when $\phi\equiv1$, conclusions of Theorem \ref{t-atomd} cover
\cite[Theorem 3]{dd12}, in which the case that $q_+=\fz$ is not included.

(ii) In the case that $p,\ q,\ s$ and $\phi$ are as in Remark \ref{r-defi}(ii),
Theorem \ref{t-atomd} was proved in \cite[Theorem 3]{dd12} and partly obtained
in \cite[Theorem 3.3]{ysiy}.

(iii) A sequence $\{a_Q\}_{Q\in\cq^\ast}$ is called
\emph{a family of smooth atoms of} $\bbeve$ if, for each $Q\in\cq^\ast$,
$a_Q$ is a $(K,\,L)$-smooth atom with $K$ and $L$ as in Theorem \ref{t-atomd}(i).
\end{remark}

To prove Theorem \ref{t-atomd}, we need the following two technical lemmas.
The first one was proved in \cite[Lemma 3.3]{fj85} and the second one is a Hardy-type
inequality which is just \cite[Lemma 3.11]{d13}.

\begin{lemma}\label{l-atom}
Let $\{\vz_j\}_{j\in\zz_+}$ be as in Definition \ref{def-b}
and $a_{Q_{vk}}$ with $v\in\zz_+$ and $k\in\zz^n$ be a $(K,\,L)$-smooth atom.
Then, for all $M\in (0,\fz)$,
 there exist positive constants $C_1$ and $C_2$ such that, for all $x\in\rn$,
when $j\le v$,
$$|\vz_j\ast a_{Q_{vk}}(x)|\le C_12^{vn/2}2^{-(v-j)(L+n)}(1+2^j|x-x_{Q_{vk}}|)^{-M}$$
and, when $j>v$,
$$|\vz_j\ast a_{Q_{vk}}(x)|\le C_22^{vn/2}2^{-(j-v)K}(1+2^v|x-x_{Q_{vk}}|)^{-M}.$$
\end{lemma}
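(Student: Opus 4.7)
The plan is to treat separately the regimes $j \le v$ and $j > v$, each by classical Frazier--Jawerth cancellation arguments applied to the factor that carries the appropriate smoothness or vanishing moments. Throughout, note that $|y - x_{Q_{vk}}| \ls 2^{-v}$ for all $y \in 3Q_{vk}$. The degenerate case $v = 0$ with $j = 0$ follows immediately from the Schwartz decay of $\Phi$ together with the compact support of $a_{Q_{0k}}$ and the uniform bound $\|a_{Q_{0k}}\|_\fz \le 1$, so I focus on the two main cases.

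For $j \le v$ with $v \in \nn$, I would exploit the moment condition (A2). Subtracting from $\varphi_j(x-y)$ its degree-$(L-1)$ Taylor polynomial in $y$ centered at $y = x_{Q_{vk}}$ does not alter
\begin{align*}
\varphi_j \ast a_{Q_{vk}}(x) = \int_{3Q_{vk}} \varphi_j(x-y)\,a_{Q_{vk}}(y)\,dy,
\end{align*}
since every monomial $(y - x_{Q_{vk}})^\gamma$ with $|\gamma| < L$ pairs to zero against $a_{Q_{vk}}$. Taylor's theorem bounds the remainder pointwise by $C\,|y - x_{Q_{vk}}|^L \sup_z |D^L \varphi_j(x - z)|$, where $z$ lies on the segment from $x_{Q_{vk}}$ to $y$. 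Combining $|y - x_{Q_{vk}}| \ls 2^{-v}$ with the Schwartz decay $|D^L \varphi_j(w)| \ls 2^{j(n+L)}(1 + 2^j|w|)^{-M'}$ for any $M' \in (0,\fz)$, the atom bounds $\|a_{Q_{vk}}\|_\fz \le 2^{vn/2}$ and $|3Q_{vk}| \ls 2^{-vn}$ produce the prefactor $2^{vn/2}\,2^{-(v-j)(L+n)}$ together with a decay $\sup_z(1 + 2^j|x - z|)^{-M'}$ that will be handled below.

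For $j > v$ I would exploit the vanishing moments of $\varphi_j$: since $j \ge 1$, $\widehat{\varphi_j}$ vanishes to infinite order at the origin. Writing $\widehat{\varphi}(\xi) = \sum_{|\gamma|=K} \xi^\gamma c_\gamma(\xi)$ with smooth $c_\gamma$ supported in the annulus where $\widehat{\varphi}$ lives, one obtains $\varphi_j = \sum_{|\gamma|=K} (-1)^K \partial^\gamma \psi_{j,\gamma}$, where $\psi_{j,\gamma}$, defined via $\widehat{\psi_{j,\gamma}}(\xi) := 2^{-jK} c_\gamma(2^{-j}\xi)$, satisfies $|\psi_{j,\gamma}(x)| \ls 2^{-jK}\,2^{jn}(1 + 2^j|x|)^{-M'}$ for any $M'$. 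Integration by parts transfers the $K$ derivatives onto the atom, where (A3) gives $|\partial^\gamma a_{Q_{vk}}| \le 2^{(K+n/2)v}$. Choosing $M' > n$, the remaining integral $\int_{3Q_{vk}} 2^{jn}(1 + 2^j|x - y|)^{-M'}\,dy$ is an $O(1)$ factor before the decay bookkeeping, producing the advertised prefactor $2^{vn/2}\,2^{-(j-v)K}$.

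The main technical step in both cases is to convert the natural decay $(1 + 2^j|x - w|)^{-M'}$, with $w$ ranging over $3Q_{vk}$, into the stated decay at the correct scale in $|x - x_{Q_{vk}}|$. I would handle this by splitting into a near regime $|x - x_{Q_{vk}}| \le 4\sqrt{n}\,2^{-v}$, where the target factor is comparable to $1$ and the unweighted bound suffices, and a far regime $|x - x_{Q_{vk}}| > 4\sqrt{n}\,2^{-v}$, where $|x - w| \ge \tfrac{1}{2}|x - x_{Q_{vk}}|$ for every $w \in 3Q_{vk}$, so that $(1 + 2^j|x - x_{Q_{vk}}|)^{-M}$ follows directly. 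In Case 2, the monotonicity $1 + 2^j|x - x_{Q_{vk}}| \ge 1 + 2^v|x - x_{Q_{vk}}|$ for $j > v$ further upgrades this to $(1 + 2^v|x - x_{Q_{vk}}|)^{-M}$, completing the proof. This reduction is routine but accounts for most of the bookkeeping.
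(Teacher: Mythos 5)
Your argument is correct and is essentially the classical Frazier--Jawerth estimate: the paper does not write out a proof of this lemma but simply cites \cite[Lemma 3.3]{fj85}, whose proof uses the same scheme you do (Taylor expansion of $\vz_j$ against the atom's vanishing moments when $j\le v$, and transfer of $K$ derivatives from $\vz_j$, via its Fourier support away from the origin, onto the atom when $j>v$, followed by the routine near/far splitting to produce the stated decay). The only point needing a word is the boundary case $L\le 0$ (no vanishing moments, so there is no degree-$(L-1)$ Taylor polynomial), where the first estimate follows from the trivial size bound because $2^{-(v-j)(L+n)}\ge 2^{-(v-j)n}$ for $j\le v$.
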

\begin{lemma}\label{l-hardy}
Let $a\in(0,1)$, $J\in\zz$, $q\in(0,\fz]$ and $\{\vez_k\}_{k\in\zz_+}$ be a sequence
of positive real numbers. For all $k\in[J\vee0,\fz)$, let
$\delta_k:=\sum_{j=(J\vee0)}^ka^{k-j}\vez_j$ and
$\eta_k:=\sum_{j=k}^\fz a^{j-k}\vez_j.$
Then there exists a positive constant $C$, depending only on $a$ and $q$, such that
$$\lf(\sum_{k=(J\vee0)}^\fz\delta_k^q\r)^{1/q}
+\lf(\sum_{k=(J\vee0)}^\fz\eta_k^q\r)^{1/q}\le C
\lf(\sum_{k=(J\vee0)}^\fz\vez_k^q\r)^{1/q}.$$
\end{lemma}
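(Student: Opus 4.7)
The plan is to prove the two bounds, one for $(\delta_k)$ and one for $(\eta_k)$, separately; they are structurally symmetric (a backward vs.\ forward geometric convolution) so the same strategy works for both. After extending $\vez_j := 0$ for $j \in \zz_+$ with $j < J\vee0$, each of $\delta_k$ and $\eta_k$ becomes a discrete convolution of $\{\vez_j\}$ against the $\ell^1$-sequence $\{a^m\}_{m \in \zz_+}$, so the claim reduces to a quantitative discrete Young-type inequality $\ell^q \ast \ell^1 \hookrightarrow \ell^q$. I would split on the range of $q$: the cases $q \in [1,\fz]$ and $q \in (0,1)$ require genuinely different arguments because Minkowski's inequality fails in the latter regime.

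For $q \in [1,\fz)$, I would write $\delta_k = \sum_{m=0}^{\fz} a^m \vez_{k-m}$ (with the zero-extension) and apply Minkowski's inequality for series in $\ell^q$ to obtain
$$\lf(\sum_{k=(J\vee0)}^\fz \delta_k^q\r)^{1/q} \le \sum_{m=0}^\fz a^m \lf(\sum_{k=(J\vee0)}^\fz \vez_{k-m}^q\r)^{1/q} \le \frac{1}{1-a}\lf(\sum_{k=(J\vee0)}^\fz \vez_k^q\r)^{1/q},$$
and similarly $\eta_k = \sum_{m=0}^\fz a^m \vez_{k+m}$ yields the matching bound after reindexing. The case $q=\fz$ is an immediate pointwise estimate $\delta_k, \eta_k \le (1-a)^{-1}\sup_j \vez_j$.

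For $q \in (0,1)$, Minkowski is unavailable, so I would instead invoke the elementary concavity inequality $(\sum_j x_j)^q \le \sum_j x_j^q$ for nonnegative $x_j$ to get
$$\delta_k^q \le \sum_{j=(J\vee0)}^k a^{(k-j)q}\vez_j^q,$$
and then exchange the order of summation by Tonelli:
$$\sum_{k=(J\vee0)}^\fz \delta_k^q \le \sum_{j=(J\vee0)}^\fz \vez_j^q \sum_{k=j}^\fz a^{(k-j)q} = \frac{1}{1-a^q}\sum_{j=(J\vee0)}^\fz \vez_j^q.$$
Taking $q$-th roots delivers the desired inequality with constant $(1-a^q)^{-1/q}$; the argument for $\eta$ is identical with the substitution $m = j-k$.

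There is no serious obstacle: the result is a classical discrete Hardy inequality, and the only choice of substance is splitting on whether $q \ge 1$. The admissible constant can be taken to be $C = 2(1-a)^{-1}$ when $q \in [1,\fz]$ and $C = 2(1-a^q)^{-1/q}$ when $q \in (0,1)$, which depends only on $a$ and $q$ as required; the factor $2$ accounts for summing the estimates for $\delta$ and $\eta$.
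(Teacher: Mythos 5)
Your proposal is correct. Note, however, that the paper does not prove this lemma at all: it simply quotes it as \cite[Lemma 3.11]{d13}, so there is no internal argument to compare against. What you give is the standard self-contained proof of this discrete Hardy (Young-type convolution) inequality: after zero-extension, $\delta_k$ and $\eta_k$ are convolutions of $\{\vez_j\}$ with the geometric sequence $\{a^m\}$, and you split at $q=1$ — Minkowski's inequality for series when $q\in[1,\fz)$ (with the trivial sup estimate at $q=\fz$), and the $q$-subadditivity $(\sum_j x_j)^q\le\sum_j x_j^q$ plus interchange of summation when $q\in(0,1)$, yielding constants $(1-a)^{-1}$ and $(1-a^q)^{-1/q}$ respectively, hence $C=2(1-a)^{-1}$ or $C=2(1-a^q)^{-1/q}$ after adding the two bounds. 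All steps are sound: the shifted sums satisfy $\sum_{k\ge(J\vee0)}\vez_{k\mp m}^q\le\sum_{k\ge(J\vee0)}\vez_k^q$, the infinite-series Minkowski step is justified for nonnegative terms (or the inequality is trivial if the right-hand side is infinite), and the constants depend only on $a$ and $q$ as required. The only cosmetic point is that the zero-extension should be declared for all integer indices $j<J\vee0$ (not only $j\in\zz_+$) so that the convolution formula $\delta_k=\sum_{m=0}^{\fz}a^m\vez_{k-m}$ is literally valid; this does not affect the argument. Your approach buys the paper independence from the external reference, at the cost of a few lines.
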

\begin{proof}[Proof of Theorem \ref{t-atomd}]
The proof of (ii) is similar to that of \cite[Theorem 3.3]{ysiy} (see also
\cite[Theorem 4.1]{fj90}). Indeed, by repeating the argument that
used in the proof of \cite[Theorem 3.3]{ysiy}, with \cite[Lemma 2.8]{ysiy}
therein replaced by Lemma \ref{l-estimate1}, we can prove (ii), the
details being omitted.

Next we prove (i) by two steps.
First, we show that $f:=\sum_{Q\in\cq^\ast}t_Qa_Q$ converges in $\cs'(\rn)$.
To this end, it suffices to prove that
\begin{equation}\label{converge}
\lim_{N\to\fz,\Lambda\to\fz}\sum_{j=0}^N
\sum_{k\in\zz^n,|k|\le\Lambda}t_{Q_{jk}}a_{Q_{jk}}
\end{equation}
exists in $\cs'(\rn)$.
By \eqref{atomd1}, we find that there exists $r\in(0,\min\{1,p_-\})$
such that $s_-+\frac n{p_-}(r-1)>-L$.
Let, for all $x\in\rn$, $\wz p(x):=p(x)/r$ and $\wz s$ be a measurable function on $\rn$
such that $s(x)-\frac n{p(x)}=\wz s(x)-\frac n{\wz p(x)}$.
Then
$
\wz s_-\ge s_-+\frac n{p_-}(r-1)>-L.
$
Therefore, by Proposition \ref{p-se-embed} and
an argument similar to that used in the proof of
\cite[Theorem 3.8]{yyz14},
we conclude that there exist
$\delta_0\in(\log_2c_1,\fz)$, $a\in(n,\fz)$, $c_0\in\nn$ and
$R\in(0,\fz)$ being large enough such that, for all $h\in\cs(\rn)$ and $j\in\zz_+$,
\begin{eqnarray*}
&&\lf|\int_\rn\sum_{k\in\zz^n,|k|\le\Lambda}t_{Q_{jk}}a_{Q_{jk}}(y)h(y)\,dy\r|\\
&&\hs\ls2^{-j(L+\wz s_-)}\sum_{v=0}^\fz2^{-v\delta_0}\sum_{i=0}^\fz2^{-i(R-L-a)}
\lf\|\sum_{k\in\zz^n}2^{j\wz s(\cdot)}|t_{Q_{jk}}|\wz\chi_{Q_{jk}}
\r\|_{L^{\wz p(\cdot)}(Q(0,2^{i+v+c_0}))}\\
&&\hs\ls2^{-j(L+\wz s_-)}\sum_{v=0}^\fz2^{-v\delta_0}\sum_{i=0}^\fz2^{-i(R-L-a)}
\phi(Q(0,2^{i+v+c_0}))\|t\|_{b_{\wz p(\cdot),\fz}^{\wz s(\cdot),\phi}(\rn)}\\
&&\hs\ls2^{-j(L+\wz s_-)}\sum_{v=0}^\fz2^{-v(\delta_0-\log_2 c_1)}
\sum_{i=0}^\fz2^{-i(R-L-a-\log_2 c_1)}
\|t\|_{b_{p(\cdot),\fz}^{s(\cdot),\phi}(\rn)}
\ls2^{-j(L+\wz s_-)}\|t\|_{b_{p(\cdot),q(\cdot)}^{s(\cdot),\phi}(\rn)}.
\end{eqnarray*}
By this and the fact that $L>-\wz s_-$, we find that
the limit of \eqref{converge} exists in $\cs'(\rn)$.

Second, we prove that
\begin{equation}\label{atomd2}
\|f\|_{\bbeve}\ls\|t\|_{\beve}.
\end{equation}
Without loss of generality, we may assume that
$\|t\|_{\beve}=1$ and show $\|f\|_{\bbeve}\ls1$.

\textbf{Case I)} $q_+\in(0,\fz)$.
By Remark \ref{r-lattice}, we see that,
for all $R\in \cd_0(\rn)$,
\begin{eqnarray}\label{atomd4}
\frac1{\phi(R)}\lf\|\lf\{\sum_{Q\in\cq^\ast,\,\ell(Q)=2^{-v}}|Q|^{-\frac{s(\cdot)}n}
|t_Q|\wz\chi_Q\r\}_{v\ge(j_R\vee0)}\r\|_{\ell^{q(\cdot)}(L^{p(\cdot)}(R))}\ls1
\end{eqnarray}
with implicit positive constant independent of $R$.
Since $f=\sum_{Q\in\cq^\ast}t_Qa_Q$ in $\cs'(\rn)$, it follows that,
for all $P\in\cq$,
\begin{eqnarray*}
\vz_j\ast f=\lf\{\sum_{v=0}^{(j_P\vee0)-1}+\sum_{v=(j_P\vee0)}^j+
\sum_{v=j+1}^\fz\r\}\sum_{\ell(Q)=2^{-v}}
t_Q\vz_j\ast a_Q=:{\rm S}_{j,1}+{\rm S}_{j,2}+{\rm S}_{j,3},
\end{eqnarray*}
where $\sum_{v=0}^{(j_P\vee0)-1}\cdots=0$ if $j_P\le0$.
Thus, by Remark \ref{re-mixed}(iv), we find that
\begin{eqnarray*}
{\rm I}_P:=&&\frac1{\phi(P)}\lf\|\lf\{2^{js(\cdot)}\vz_j\ast f\r\}_{j\ge(j_P\vee 0)}
\r\|_{\ell^{q(\cdot)}(L^{p(\cdot)}(P))}\\
\ls&&\sum_{i=1}^3\frac1{\phi(P)}\lf\|\lf\{2^{js(\cdot)}{\rm S}_{j,i}\r\}_{j\ge(j_P\vee0)}
\r\|_{\ell^{q(\cdot)}(L^{p(\cdot)}(P))}
=:{\rm I}_{P,1}+{\rm I}_{P,2}+{\rm I}_{P,3}.
\end{eqnarray*}

In what follows, let $r\in(0,\min\{1,p_-\})$ satisfy $L+n-n/r+s_->0$.

We show that ${\rm I}_{P,1}\ls1$.
To this end, it suffices to
consider the case that $j_P>0$ and prove that
there exists a positive constant $C$ such that
\begin{eqnarray*}
\varrho_{\ell^{q(\cdot)}(L^{p(\cdot)})}
\lf(\lf\{\frac{\chi_P2^{js(\cdot)}}{C\phi(P)}\sum_{v=0}^{j_P-1}
\sum_{\ell(Q)=2^{-v}}|t_Q||\vz_j\ast a_Q|\r\}_{j\ge j_P}\r)\le1,
\end{eqnarray*}
which, by Remark \ref{re-mixed}(ii), is equivalent to show that
\begin{eqnarray*}
{\rm J}_{P,1}:=\sum_{j=j_P}^\fz\lf\|\lf[\frac{\chi_P}{\phi(P)}2^{js(\cdot)}
\sum_{v=0}^{j_P-1}\sum_{\ell(Q)=2^{-v}}
|t_Q||\vz_j\ast a_Q|\r]^{q(\cdot)}
\r\|_{L^{\frac{p(\cdot)}{q(\cdot)}}(\rn)}\ls1.
\end{eqnarray*}
By Lemma \ref{l-atom} and \eqref{simple-ineq}, we find that
\begin{eqnarray}\label{atomd3}
{\rm J}_{P,1}
\ls&&\sum_{j=j_P}^\fz
\lf\|\lf[\frac{\chi_P}{\{\phi(P)\}^r}2^{js(\cdot)r}\sum_{v=0}^{j_P-1}
\sum_{k\in\zz^n}|t_{Q_{vk}}|^r|Q_{vk}|^{-r/2}\r.\r.\\
&&\hs\hs\times\lf.2^{(v-j)Kr}(1+2^v|\cdot-x_{Q_{vk}}|)^{-Mr}\Bigg]^{q(\cdot)}\r\|
_{L^{\frac{p(\cdot)}{rq(\cdot)}}(\rn)}^{\frac1r},\noz
\end{eqnarray}
where $M\in(0,\fz)$ is large enough.
On the other hand, by the proof of \cite[Theorem 3.8(i)]{yyz14},
we know that, for all $v,\ j\in\zz_+$ with $v\le j$ and $x\in P$,
\begin{eqnarray*}
&&2^{js(x)r}\sum_{k\in\zz^n}|t_{Q_{vk}}|^r|Q_{vk}|^{-\frac r2}2^{(v-j)Kr}
(1+2^v|x-x_{Q_{vk}}|)^{-Mr}\\
&&\hs\ls2^{(v-j)(K-s_+)r}\sum_{i=0}^\fz2^{-i(M-a-\frac{C_{\log}(s)}{r})r}
\eta_{v,ar}\ast\lf(\lf[\sum_{k\in \zz^n}|t_{Q_{vk}}|2^{vs(\cdot)}
\wz\chi_{Q_{vk}}\chi_{Q(c_P,2^{i-v+c_0})}\r]^r\r)(x),
\end{eqnarray*}
where $a\in(n/r,\fz)$, $c_P$ is the center of $P$ and
$c_0\in\nn$ independent of $x,\,P,\,i,\,v$ and $k$.
From this, \eqref{atomd3}, and (i) and (ii) of Lemma \ref{l-inf-norm}, we deduce that
${\rm J}_{P,1}\ls\sum_{j=j_P}^\fz[({\rm J}_{P,1}^j)^{q_-}
+({\rm J}_{P,1}^j)^{q_+}],$
where
\begin{eqnarray*}
{\rm J}_{P,1}^j
&&:=\lf\|\frac{\chi_P}{[\phi(P)]^r}
\sum_{v=0}^{j_P-1}2^{(v-j)(K-s_+)r}\sum_{i=0}^\fz2^{-i(M-a-C_{\log}(s)/r)r}\r.\\
&&\hs\hs\times\lf.\eta_{v,ar}\ast\lf(\sum_{k\in\zz^n}
|t_{Q_{vk}}|^r2^{vs(\cdot)r}
|Q_{vk}|^{-\frac r2}\chi_{Q_{vk}}\chi_{Q(c_P,2^{i-v+c_0})}\r)
\r\|_{L^{\frac{p(\cdot)}r}(\rn)}^{\frac{1}r}.
\end{eqnarray*}
By Remark
\ref{re-vlp}(i), \eqref{atomd4}, Remarks \ref{re-conv}(i) and \ref{r-lattice},
we find that
\begin{eqnarray*}
{\rm J}_{P,1}^j
&&\ls\Bigg\{\frac1{[\phi(P)]^r}\sum_{v=0}^{j_P-1}2^{(v-j)(K-s_+)r}
\sum_{i=0}^\fz2^{-i(M-a-C_{\log}(s)/r)r}\\
&&\hs\hs\times\lf.\lf\|\sum_{k\in\zz^n}
|t_{Q_{vk}}|^r2^{vs(\cdot)r}
|Q_{vk}|^{-\frac r2}\chi_{Q_{vk}}\chi_{Q(c_P,2^{i-v+c_0})}
\r\|_{L^{\frac{p(\cdot)}r}(\rn)}\r\}^{\frac{1}r}\\
&&\ls\lf\{\sum_{v=0}^{j_P-1}2^{(v-j)(K-s_+)r}\sum_{i=0}^\fz2^{-i(M-a-C_{\log}(s)/r)r}
\frac{[\phi(Q(c_P,2^{i-v+c_0}))]^r}{[\phi(P)]^r}\r\}^{\frac{1}r}.
\end{eqnarray*}
By this, \eqref{phi-1} and the fact that
$K\in(s_++\log_2c_1,\fz)$, we know that
\begin{eqnarray*}
\sum_{j=j_P}^\fz({\rm J}_{P,1}^j)^{q_-}
&&\ls2^{j_Pq_-\log_2c_1}\sum_{j=j_P}^\fz\lf\{2^{j(s_+-K)}
\sum_{v=0}^{j_P-1}2^{(K-s_+-\log_2c_1)vr}\r\}^{\frac{q_-}r}\ls1
\end{eqnarray*}
and $\sum_{j=j_P}^\fz({\rm J}_{P,1}^j)^{q_+}\ls1$,
where $M$ is chosen large enough such that $M>a+C_{\log}(s)/r+\log_2c_1$,
which implies ${\rm I}_{P,1}\ls1$. This is a desired estimate.

We now estimate ${\rm I}_{P,2}$.
By Lemma \ref{l-atom}, we see that, for all $M\in(0,\fz)$ and $x\in\rn$,
$$\sum_{k\in\zz^n}2^{js(x)}|t_{Q_{vk}}||\vz_j\ast a_{Q_{vk}}(x)|
\ls2^{(v-j)(K-s_+)}\sum_{k\in\zz^n}|Q_{vk}|^{-\frac{s(x)}n+\frac12}
|t_{Q_{vk}}|(1+2^v|x-x_{Q_{vk}}|)^{-M}$$
and hence, for all $r\in(0,\min\{1/q_+,p_-/q_+\})$,
\begin{eqnarray}\label{atom2-z}
&&\lf\|\lf[
\frac{\chi_P}{\phi(P)}2^{js(\cdot)}{\rm S}_{j,2}\r]^{q(\cdot)}
\r\|_{L^{\frac{p(\cdot)}{q(\cdot)}}(\rn)}\\
&&\hs\ls\lf\{\sum_{v=(j_P\vee0)}^j2^{(v-j)q_-(K-s_+)}
\lf\|\lf[\frac{\chi_P}{\phi(P)}
\sum_{\ell(Q)=2^{-v}}\frac{|Q|^{-\frac{s(\cdot)}n+\frac12}
|t_{Q}|}{(1+2^v|\cdot-x_{Q}|)^{M}}\r]^{rq(\cdot)}\r\|
_{{L^{\frac{p(\cdot)}{rq(\cdot)}}(\rn)}}\r\}^{\frac1r}.\noz
\end{eqnarray}

We claim that there exists a positive constant $c$ such that
\begin{eqnarray}\label{atom2-x}
&&\lf\|\lf[\frac{c\chi_P}{\phi(P)}
\sum_{\ell(Q)=2^{-v}}\frac{|Q|^{-\frac{s(\cdot)}n+\frac12}
|t_{Q}|}{(1+2^v|\cdot-x_{Q}|)^{M}}\r]^{rq(\cdot)}\r\|
_{L^{\frac{p(\cdot)}{rq(\cdot)}}(\rn)}\\
&&\hs\le\sum_{i=0}^\fz2^{-i\tau}
\lf\|\lf[\frac{\chi_{Q_i^0}}{\phi({Q_i^0})}
\sum_{\ell(Q)=2^{-v}}|Q|^{-\frac{s(\cdot)}n}|t_Q|\wz\chi_Q
\r]^{rq(\cdot)}\r\|_{L^{\frac{p(\cdot)}{rq(\cdot)}}(\rn)}
+2^{-v}=:\delta_v^P,\noz
\end{eqnarray}
where $Q_i^0:=Q(c_P,2^{i-j_P+c_0})$ with some $c_0\in\nn$
and $\tau\in(0,\fz)$.

From the above claim, \eqref{atom2-z}, Lemma \ref{l-hardy},
the Minkowski inequality and \eqref{atomd4}, we deduce that
\begin{eqnarray*}
&&\sum_{j=(j_P\vee0)}^\fz\lf\|\lf[
\frac{\chi_P}{\phi(P)}2^{js(\cdot)}{\rm S}_{j,2}\r]^{q(\cdot)}
\r\|_{L^{\frac{p(\cdot)}{q(\cdot)}}(\rn)}\\
&&\hs\hs\ls\sum_{j=(j_P\vee0)}^\fz\lf\{\sum_{i=0}^\fz2^{-i\tau}
\lf\|\lf[\frac{\chi_{Q_i^0}}{\phi({Q_i^0})}
\sum_{\ell(Q)=2^{-j}}|Q|^{-\frac{s(\cdot)}n}|t_Q|\wz\chi_Q
\r]^{rq(\cdot)}\r\|_{L^{\frac{p(\cdot)}{rq(\cdot)}}(\rn)}\r\}^{\frac1r}
+\sum_{j=0}^\fz2^{-j/r}\\
&&\hs\hs\ls\lf\{\sum_{i=0}^\fz2^{-i\tau}\lf(\sum_{j=(j_P\vee0)}^\fz
\lf\|\lf[\frac{\chi_{Q_i^0}}{\phi({Q_i^0})}
\sum_{\ell(Q)=2^{-j}}|Q|^{-\frac{s(\cdot)}n}|t_Q|\wz\chi_Q
\r]^{q(\cdot)}\r\|_{L^{\frac{p(\cdot)}{q(\cdot)}}(\rn)}\r)^r\r\}^{\frac1r}+1\\
&&\hs\hs\ls\lf\{\sum_{i=0}^\fz2^{-i\tau}\r\}^{1/r}+1\ls1,
\end{eqnarray*}
which, together with Lemma \ref{l-modular} and Remark \ref{re-mixed}(i)
again, implies that ${\rm I}_{P,2}\ls1$.

Let us prove \eqref{atom2-x} now. Obviously, it suffices to show that
$$\lf\|[\delta_v^P]^{-1}\lf[\frac{c\chi_P}{\phi(P)}
\sum_{\ell(Q)=2^{-v}}\frac{|Q|^{-\frac{s(\cdot)}n+\frac12}
|t_{Q}|}{(1+2^v|\cdot-x_{Q}|)^{M}}\r]^{rq(\cdot)}\r\|
_{L^{\frac{p(\cdot)}{rq(\cdot)}}(\rn)}\le1,$$
which, via Lemma \ref{l-inf-norm}, is a consequence of
$$\mathcal{A}:=\lf\|[\delta_v^P]^{-\frac1{rq(\cdot)}}\frac{\chi_P}{\phi(P)}
\sum_{\ell(Q)=2^{-v}}\frac{|Q|^{-\frac{s(\cdot)}n+\frac12}
|t_{Q}|}{(1+2^v|\cdot-x_{Q}|)^{M}}\r\|
_{\vlp}\ls1.$$
Taking $t\in(0,\min\{1,p_-\})$ and using some arguments
similar to those used in \cite[pp.\,29-30]{d13}, we conclude that, for all $x\in\rn$,
\begin{eqnarray}\label{atom2-y}
&&\sum_{\ell(Q)=2^{-v}}[\delta_v^P]^{-\frac1{rq(x)}}\frac{\chi_P(x)}{\phi(P)}
\sum_{\ell(Q)=2^{-v}}\frac{|Q|^{-\frac{s(x)}n+\frac12}
|t_{Q}|}{(1+2^v|x-x_{Q}|)^{M}}\\
&&\hs\hs\ls\sum_{i=0}^{(v-c_0)\vee0}
2^{i\zeta}\lf\{\cm\lf(\lf[\frac{\chi_{Q_i^0}}{\phi(P)}
\sum_{\ell(Q)=2^{-v}}\frac{2^{vs(\cdot)}}{[\delta_v^P]^{\frac 1{rq(\cdot)}}}
|t_Q|\wz\chi_Q\r]^t\r)\r\}^{1/t}
+\sum_{i=(v-c_0)\vee0}^{\fz}
2^{i\vartheta}\cdots,\noz
\end{eqnarray}
where $\zeta:=-M+\frac nt+\frac2r C_{\log}(q)+C_{\log}(s)$ and
$\vartheta:=-M+\frac nt+\frac2r(\frac1{q_-}-\frac1{q_+})+s_+-s_-$.
Taking $M$ large enough such that
$$M>\max\lf\{\frac nt+\frac2rC_{\log}(q)+C_{\log}(s)+\log_2c_1,\,
\frac2r\lf(\frac1{q_-}-\frac1{q_+}\r)+s_+-s_-+\log_2c_1\r\}+\tau,$$
then, by \eqref{phi-1}, Lemmas \ref{l-hlo} and \ref{l-inf-norm},
we know that
\begin{eqnarray*}
&&\sum_{i=0}^{(v-c_0)\vee0}2^{i\varsigma t}\lf\|
\cm\lf(\lf[\frac{\chi_{Q_i^0}}{\phi(P)}
\sum_{\ell(Q)=2^{-v}}\frac{2^{vs(\cdot)}}{[\delta_v^P]^{\frac 1{rq(\cdot)}}}
|t_Q|\wz\chi_Q\r]^t\r)\r\|_{L^{\frac{p(\cdot)}t}(\rn)}\\
&&\hs\ls\sum_{i=0}^{(v-c_0)\vee0}2^{it(\varsigma+\log_2c_1)}
\lf\|\frac{\chi_{Q_i^0}}{\phi(Q_i^0)}
\sum_{\ell(Q)=2^{-v}}\frac{2^{vs(\cdot)}}{[\delta_v^P]^{\frac 1{rq(\cdot)}}}
|t_Q|\wz\chi_Q)\r\|_{L^{p(\cdot)}(\rn)}^t\\
&&\hs\ls\!\sum_{i=0}^{(v-c_0)\vee0}\!\!\!2^{it(\varsigma+\log_2c_1)}
\!\!\lf\|\frac1{\delta_v^P}\lf[\frac{\chi_{Q_i^0}}{\phi(Q_i^0)}
\sum_{\ell(Q)=2^{-v}}2^{vs(\cdot)}
|t_Q|\wz\chi_Q\r]^{rq(\cdot)}\r\|_{L^{\frac{p(\cdot)}{rq(\cdot)}}(\rn)}^{\frac t
{rq_+}}\ls\sum_{i=0}^{\fz}2^{it(\varsigma+\log_2c_1+\tau)}\ls1,
\end{eqnarray*}
where we used the definition of $\delta_v^P$ in the penultimate inequality
and, similarly,
\begin{equation*}
\sum_{i=(v-c_0)\vee0}^{\fz}2^{i\vartheta t}\lf\|
\cm\lf(\lf[\frac{\chi_{Q_i^0}}{\phi(P)}
\sum_{\ell(Q)=2^{-v}}\frac{2^{vs(\cdot)}}{[\delta_v^P]^{\frac 1{rq(\cdot)}}}
|t_Q|\wz\chi_Q\r]^t\r)\r\|_{L^{\frac{p(\cdot)}t}(\rn)}\ls1.
\end{equation*}
From this and \eqref{atom2-y}, we deduce that $\ca\ls1$, which implies that
\eqref{atom2-x} holds true and then completes the proof
that I$_{P,2}\ls1$.

We next prove that ${\rm I}_{P,3}\ls1$.
To this end, it suffices to show that
$$\varrho_{\ell^{\frac{q(\cdot)}{r}}(L^{\frac{p(\cdot)}r})}
\lf(\lf\{\frac{\chi_P}{\wz C\phi(P)}2^{js(\cdot)}\sum_{v=(j_P\vee0)}^\fz
\sum_{\ell(Q)=2^{-v}}|t_Q||\vz_j\ast a_Q|\r\}_{j\ge(j_P\vee0)}^r\r)\le1$$
for some positive constant $\wz C$ large enough independent of $P$,
which, by Definition \ref{def2}, is equivalent to show that
$\sum_{j=(j_P\vee0)}^\fz {\rm Y}_j^P\ls1$,
where, for all $j\in\zz_+\cap [j_P\vee0,\fz)$,
$${\rm Y}_j^P:=\inf\lf\{\lz_j\in(0,\fz):\ \varrho_{\frac{p(\cdot)}{r}}
\lf(\frac{\chi_P[2^{js(\cdot)}\sum_{v=j}^\fz\sum_{\ell(Q)=2^{-v}}
|t_Q||\vz_j\ast a_Q|]^r}{\wz C[\phi(P)\lz_j^{1/q(\cdot)}]^r}\r)\le1\r\}.$$

We claim that, for all $P\in\cq$ and $j\in\zz_+\cap[j_P\vee0,\fz)$,
\begin{eqnarray}\label{5.14x}
{\rm Y}_j^P
&&\le2^{-j}+\sum_{v=j}^\fz2^{(j-v)d}
\sum_{i=0}^\fz2^{-i\wz d}\\
&&\hs\hs\times\inf\lf\{\xi_v\in(0,\fz):\ \varrho_{\frac{p(\cdot)}r}
\lf(\frac{\chi_{P_{i}}[\sum_{\ell(Q)=2^{-v}}|t_Q|2^{-vs(\cdot)}\chi_Q]^r}
{[\phi(P_{i})\xi_v^{1/q(\cdot)}]^r}\r)\le1\r\}\noz\\
&&=:2^{-j}+\sum_{v=j}^\fz2^{(j-v) d}
\sum_{i=0}^\fz2^{-i\wz  d}
{\rm Y}_{v,2}^P=:\delta_j^P,\noz
\end{eqnarray}
where $P_i:=Q(c_P,2^{i-j_P+c_0})$ with $c_0\in\nn$,
$ d$ is chosen such that $L+n-\frac nr+s_--\frac d{q_+}>0$ and $\wz d\in(0,\fz)$.

From the above claim, \eqref{atomd4}, \eqref{mixed-x} and Remark \ref{re-mixed}(i),
we deduce that
\begin{eqnarray*}
\sum_{j=(j_P\vee0)}^\fz{\rm Y}_j^P
&&\ls1+\sum_{v=(j_P\vee0)}^\fz\sum_{j=(j_P\vee0)}^{v}
2^{(j-v) d}\sum_{i=0}^\fz2^{-i\wz  d}
{\rm Y}_{v,2}^P
\ls1+\sum_{i=0}^\fz2^{-i\wz  d}\sum_{v=(j_P\vee0)}^\fz{\rm Y}_{v,2}^P
\ls1,
\end{eqnarray*}
which implies that ${\rm I}_{P,3}\ls1$ and
$\delta_j^P\in[2^{-j},2^{-j}+\theta]$ for some
$\theta\in[0,\fz)$

Therefore, to complete the estimate for I$_{P,3}$,
it remains to prove the above claim \eqref{5.14x}.
To this end, it suffices to show that, for all $j\in\zz_+\cap[j_P\vee0,\fz)$,
\begin{eqnarray*}
\inf\lf\{\wz\lz_j\in(0,\fz):\ \varrho_{\frac{p(\cdot)}{r}}
\lf(\frac{\chi_P[2^{js(\cdot)}\sum_{v=j}^\fz\sum_{\ell(Q)=2^{-v}}
|t_Q||\vz_j\ast a_Q|]^r}{[\phi(P)(\delta_j^P\wz\lz_j)^{1/q(\cdot)}]^r}\r)
\le1\r\}\ls1,
\end{eqnarray*}
which follows from the following estimate
\begin{equation}\label{atomd5}
{\rm H}_j^P:=\lf\|\frac{\chi_P2^{js(\cdot)r}}{\{\phi(P)[\delta_j^P]^{1/q(\cdot)}\}^r}
\sum_{v=j}^\fz\sum_{\ell(Q)=2^{-v}}|t_Q|^r|\vz_j\ast a_Q|^r
\r\|_{L^{\frac{p(\cdot)}r}}\ls1.
\end{equation}

Next we show \eqref{atomd5}.
By Lemma \ref{l-atom} and Remark \ref{re-vlp}, we find that
\begin{eqnarray}\label{atomd1-y}
{\rm H}_{j}^P\ls\sum_{v=j}^\fz2^{-(v-j)(L+n)r}
\lf\|\frac{\chi_P2^{js(\cdot)r}}{[\{\phi(P)[\delta_j^P]^{1/q(\cdot)}\}^r}
\sum_{k\in\zz^n}\frac{|t_{Q_{vk}}|^r|Q_{vk}|^{-\frac r2}}
{(1+2^j|\cdot-2^{-v}k|)^{Rr}}\r\|_{L^{\frac{p(\cdot)}{r}}(\rn)},
\end{eqnarray}
where $R$ can be large enough.
For all $x\in P$ and $v\in\zz_+$ with $v\ge j$, let
 $$\Omega_{0,j}^{x,v}:=\{k\in\zz^n:\ 2^j|x-2^{-v}k|\le1\}$$ and, for all $i\in\nn$,
 $\Omega_{i,j}^{x,v}:=\{k\in\zz^n:\ 2^{i-1}<2^j|x-2^{-v}k|\le2^i\}$.
 Then, we see that, for all $x\in P$,
 \begin{eqnarray}\label{atomd1-x}
{\rm J}(v,j,x,P)
:=&&\frac{2^{js(x)r}}{(\delta_j^P)^{\frac r{q(x)}}}
\sum_{k\in\zz^n}\frac{|t_{Q_{vk}}|^r|Q_{vk}|^{-\frac r2}}
{(1+2^j|x-2^{-v}k|)^{Rr}}\\
\sim&&\frac{2^{js(x)r}}{(\delta_j^P)^{\frac r{q(x)}}}
\sum_{i=0}^\fz\sum_{k\in\Omega_{i,j}^{x,v}}
|t_{Q_{vk}}|^r|Q_{vk}|^{-\frac r2}2^{-iRr}\noz\\
\sim&&\frac{2^{js(x)r}}{(\delta_j^P)^{\frac r{q(x)}}}\sum_{i=0}^\fz2^{-iRr}2^{vn}
\int_{\cup_{\wz k\in\Omega_{i,j}^{x,v}}Q_{v\wz k}}
\lf[\sum_{k\in\Omega_{i,j}^{x,v}}|t_{Q_{vk}}|\wz \chi_{Q_{vk}}(y)\r]^r\,dy.\noz
\end{eqnarray}
Since, for all $i\in\zz_+$, $v\in\zz_+$ with $v\ge j$,
$x\in P$ and $y\in\cup_{\wz k\in\Omega_{i,j}^{x,v}}Q_{v\wz k}$,
there exists $\wz k_y\in\Omega_{i,j}^{x,v}$ such that $y\in Q_{v\wz k_y}$,
it follows that
\begin{eqnarray}\label{atomd9}
1+2^j|x-y|&&\le 1+2^j|x-x_{Q_{v\wz k_y}}|+2^j|y-x_{Q_{v\wz k_y}}|
\ls 2^i+2^{j-v}\ls2^i
\end{eqnarray}
and hence
\begin{eqnarray}\label{atomd8}
|y-c_P|\le |y-x_{Q_{v\wz k_y}}|+|x-x_{Q_{v\wz k_y}}|+|x-c_P|
\ls2^{-v}+2^{i-j}+2^{-j_P}\ls2^{i-j_P}.
\end{eqnarray}
By \eqref{atomd8}, we see that, for all $i\in\zz_+$, $v\in\zz_+$ with $v\ge j$
and $x\in P$,
$$\bigcup_{\wz k\in\Omega_{i,j}^{x,v}}Q_{v\wz k}\subset Q(c_P,2^{i-j_P+c_0})=:Q_i^0$$
for some constant $c_0\in\nn$,
which, combined with \eqref{atomd1-x}, \eqref{atomd9} and Remark \ref{r-3.10x},
implies that
\begin{eqnarray}\label{ad-y}
{\rm J}(v,j,x,P)&&\ls(\delta_j^P)^{-\frac r{q(x)}}2^{js(x)r}\sum_{i=0}^\fz
2^{-iRr}2^{(v-j)n}2^{i(a+\vez)r}\\
&&\hs\hs\times\ \eta_{j,ar+\vez r}\ast
\lf(\lf[\sum_{\wz k\in\Omega_{i,j}^{x,v}}|t_{Q_{v\wz k}}|
\wz\chi_{Q_{vk}}\chi_{Q_i^0}\r]^r\r)(x)\noz\\
&&\ls2^{(v-j)(n-rs_-)}\sum_{i=0}^\fz2^{-ir(R-a-\vez)}
 \eta_{j,ar}\ast\lf(\lf[\sum_{k\in\zz^n}
(\delta_j^P)^{-\frac 1{q(\cdot)}}\r.\r.\noz\\
&&\hs\hs\times\lf.\lf.|t_{Q_{vk}}|
2^{vs(\cdot)}\wz\chi_{Q_{vk}}\chi_{Q_i^0}\r]^r\r)(x),\noz
\end{eqnarray}
where $\vez\in[C_{\log}(s)+C_{\log}(1/q),\fz)$.
From this, \eqref{atomd1-y}, Lemma \ref{l-conv-ineq} and Remark \ref{re-vlp}(ii),
we deduce that
\begin{eqnarray*}
{\rm H}_{j}^P
&&\ls\sum_{v=j}^\fz2^{-(v-j)(L+n-\frac nr+s_-)r}\sum_{i=0}^\fz2^{-i(R-a-\vez)r}
\frac{[\phi(Q_i^0)]^r}{[\phi(P)]^r}\\
&&\hs\hs\times\lf\|\frac{(\delta_j^P)^{-r/q(\cdot)}}
{\phi(Q_i^0)}\sum_{k\in\zz^n}|t_{Q_{vk}}|
2^{vs(\cdot)}\wz\chi_{Q_{vk}}\r\|_{L^{p(\cdot)}(Q_i^0)}^r\noz\\
&&\ls\sum_{v=j}^\fz2^{-(v-j)(L+n-\frac nr+s_--\frac{ d}{q_+})r}
\sum_{i=0}^\fz2^{-i(R-a-\vez-\log c_1-\wz  d/q_-)r}\noz\\
&&\hs\hs\times \lf\|\frac{2^{(v-j)
 dr/q(\cdot)}2^{-i\wz dr/q(\cdot)}}
{[\phi(Q_i^0)]^r(\delta_j^P)^{r/q(\cdot)}}\lf[\sum_{k\in\zz^n}|t_{Q_{vk}}|
2^{vs(\cdot)}\wz\chi_{Q_{vk}}\r]^r
\r\|_{L^{\frac{p(\cdot)}r}(Q_i^0)}\noz\\
&&\ls\sum_{v=j}^\fz2^{-(v-j)(L+n-\frac nr+s_--\frac{ d}{q_+})r}
\sum_{i=0}^\fz2^{-i(R-a-\vez-\log c_1-\frac{\wz  d}{q_-})r}\ls1,\noz
\end{eqnarray*}
where $R$ is chosen large enough such that
$R>a+\vez+\log c_1+\wz d/q_-$,
which completes the proof of that I$_{P,3}\ls1$ and hence the case I.

\textbf{Case II} $q_+=\fz$.

In this case, by Remark \ref{re-conv}(iii), we see that $q(x)=\fz$
for all $x\in\rn$. Thus, by Remark \ref{re-mixed}(v), we see that
$$\|t\|_{b_{p(\cdot),\fz}^{s(\cdot),\phi}(\rn)}
=\sup_{P\in\cq}\frac1{\phi(P)}\sup_{j\in\zz_+,j\ge(j_P\vee0)}
\lf\|\sum_{Q\in\cq^\ast,\ell(Q)=2^{-j}}|Q|^{-\frac{s(\cdot)}n}|t_Q|\wz\chi_Q
\r\|_{L^{p(\cdot)}(P)}.$$
Let $P$ be a given dyadic cube. Then, by \eqref{converge}, we find that, for
all $j\in\zz_+\cap[j_P\vee0,\fz)$,
\begin{eqnarray}\label{ad-z}
{\rm G}_P^j
:=&&\frac1{\phi(P)}\lf\|2^{js(\cdot)}|\vz_j\ast f|\r\|_{L^{p(\cdot)}(P)}\\
\ls&&\frac1{\phi(P)}\lf\|2^{js(\cdot)}\sum_{v=0}^{j-1}
\sum_{Q\in\cq^\ast,\ell(Q)=2^{-v}}|t_Q||\vz_j\ast a_Q|\r\|_{L^{p(\cdot)}(P)}\noz\\
&&\hs+\frac1{\phi(P)}\lf\|2^{js(\cdot)}\sum_{v=j}^{\fz}
\sum_{Q\in\cq^\ast,\ell(Q)=2^{-v}}|t_Q||\vz_j\ast a_Q|\r\|_{L^{p(\cdot)}(P)}
=:{\rm G}_{P,1}^j+{\rm G}_{P,2}^j.\noz
\end{eqnarray}

To estimate ${\rm G}_{P,1}^j$ and ${\rm G}_{P,2}^j$, we let $\vez\in(C_{\log}(s),\fz)$,
$r\in(0,\min\{1,p_-\})$ and $a\in(n/r,\fz)$.
For ${\rm G}_{P,1}^j$, by an argument similar to that used in the estimate
for I$_{P,1}$,
we conclude that there exists a positive constant $c_0$ such that
\begin{eqnarray*}
{\rm G}_{P,1}^j
&&\ls\Bigg\{\sum_{v=0}^j2^{(v-j)(K-s_+)r}\sum_{i=0}^\fz2^{-i(M-a-\vez/r)}\\
&&\hs\hs\times\lf.\frac1{\phi(P)}
\lf\|\sum_{Q\in\cq^\ast,\ell(Q)=2^{-v}}|t_Q|^r2^{vs(\cdot)r}|Q|^{-\frac r2}
\chi_{Q}\r\|_{L^{\frac{p(\cdot)}r}(Q(c_P,2^{i-v+c_0}))}\r\}^\frac1r,
\end{eqnarray*}
which, together with Remark \ref{r-lattice}, \eqref{phi-1}
and the facts that $c_1\in[1,\fz)$ and $j\ge j_P$, implies that
\begin{eqnarray}\label{ad-w}
{\rm G}_{P,1}^j
&&\ls\|t\|_{b_{p(\cdot),\fz}^{q(\cdot),\phi}(\rn)}
\lf\{\sum_{v=0}^j2^{(v-j)(K-s_+)r}\sum_{i=0}^\fz2^{-i(M-a-\frac{\vez}r)}
\frac{[\phi(Q(c_P,2^{i-v}))]^r}{[\phi(P)]^r}\r\}^\frac1r\\
&&\ls\|t\|_{b_{p(\cdot),\fz}^{q(\cdot),\phi}(\rn)}
\lf\{\sum_{v=0}^j2^{v(K-s_+-\log_2c_1)}\sum_{i=0}^\fz
2^{-i(M-a-\frac{\vez}r-\log_2c_1)}2^{j_P\log_2c_1}\r\}^\frac1r\noz\\
&&\ls\|t\|_{b_{p(\cdot),\fz}^{q(\cdot),\phi}(\rn)}
2^{-j\log_2c_1}2^{j_P\log_2c_1}\ls\|t\|_{b_{p(\cdot),\fz}^{q(\cdot),\phi}(\rn)}.\noz
\end{eqnarray}

For ${\rm G}_{P,2}^j$, by an argument similar to that used in the proof of
\eqref{ad-y}, we find that there exists $c_0\in\nn$ such that
\begin{eqnarray*}
{\rm G}_{P,2}^j
&&\ls\frac1{\phi(P)}\lf\{\sum_{v=j+1}^\fz2^{-(v-j)(L+n)r}\sum_{i=0}^\fz
2^{-i(R-a-\vez)r}\r.\\
&&\hs\hs\times\lf.\lf\|\eta_{j,ar}\ast\lf(\lf[\sum_{\gfz{Q\in\cq^\ast}
{\ell(Q)=2^{-v}}}|t_Q||Q|^{-\frac{s(\cdot)}{n}}\wz\chi_Q
\chi_{Q(c_P,2^{i-j_P+c_0})}\r]^r\r)\r\|
_{L^{\frac{p(\cdot)}r}(\rn)}\r\}^\frac1r,
\end{eqnarray*}
which, combined with \eqref{phi-1},
Remarks \ref{re-conv}(i) and \ref{r-lattice},
implies that
\begin{eqnarray*}
{\rm G}_{P,2}^j
&&\ls\frac1{\phi(P)}\lf\{\sum_{v=j+1}^\fz2^{-(v-j)(L+n)r}\sum_{i=0}^\fz
2^{-i(R-a-\vez)r}\r.\\
&&\hs\hs\times\lf.\lf\|\lf(\sum_{Q\in\cq^\ast,\ell(Q)=2^{-v}}
|t_Q||Q|^{-\frac{s(\cdot)}{n}}\wz\chi_Q\r)\r\|
_{L^{p(\cdot)}(Q(c_P,2^{i-j_P+c_0}))}^r\r\}^\frac1r\\
&&\ls\|t\|_{b_{p(\cdot),\fz}^{q(\cdot),\phi}(\rn)}
\lf\{\sum_{v=j+1}^\fz2^{-(v-j)(L+n)r}\sum_{i=0}^\fz
2^{-i(R-a-\vez-\log_2c_1)r}\r\}^\frac1r
\ls\|t\|_{b_{p(\cdot),\fz}^{q(\cdot),\phi}(\rn)}.
\end{eqnarray*}
By this, \eqref{ad-z} and \eqref{ad-w}, we conclude that
\begin{eqnarray*}
\|f\|_{B_{p(\cdot),\fz}^{s(\cdot),\phi}(\rn)}
&&\ls\sup_{P\in\cq}\frac1{\phi(P)}\sup_{j\in\zz_+\cap[(j_P\vee0),\fz)}
\lf\|2^{js(\cdot)}|\vz_j\ast f|\r\|_{L^{p(\cdot)}(P)}\\
&&\ls\sup_{P\in\cq}\frac1{\phi(P)}\sup_{j\in\zz_+\cap[(j_P\vee0),\fz)}
({\rm G}_{P,1}^j+{\rm G}_{P,2}^j)
\ls\|t\|_{b_{p(\cdot),\fz}^{q(\cdot),\phi}(\rn)},
\end{eqnarray*}
which completes the proof of the case II.

Combining Cases I and II, we conclude that \eqref{atomd2} holds true.
This finishes the proof of Theorem \ref{t-atomd}.
\end{proof}
\begin{remark}\label{r-key}
We point out that the method used in the proof of Lemma \ref{l-equi} plays
a very important role in the proof of Theorem \ref{t-atomd}.
Precisely, the argument used in proofs of \eqref{5.14x} and \eqref{atom2-x}
is similar to that used in the proof of Lemma \ref{l-equi}.
\end{remark}

\section{An application to trace operators\label{s6}}

\hskip\parindent
The purpose of this section is to study the trace of Besov-type spaces
with variable smoothness and integrability.

Let $f\in\bbeve$. Then, by Theorem \ref{t-atomd}, we can write
$f=\sum_{Q\in\cq^\ast}t_Qa_Q$ in $\cs'(\rn)$, where $\{a_Q\}_{Q\in\cq^\ast}$
is a family of smooth atoms of $\bbeve$ and $\{t_Q\}_{Q\in\cq^\ast}\subset\cc$
satisfies
$$\|\{t_Q\}_{Q\in\cq^\ast}\|_{\beve}\le C\|f\|_{\bbeve}$$
with $C$ being a positive constant independent of $f$.
Define the \emph{trace} of $f$ by setting, for all $\widetilde{x}\in\rr^{n-1}$,
\begin{equation}\label{trace-d}
\mathop\mathrm{Tr}(f)(\widetilde{x}):=\sum_{Q\in\cq^\ast}t_Qa_Q(\widetilde{x},0).
\end{equation}
This definition of $\mathop\mathrm{Tr}(f)$ is determined canonical for all $f\in \bbeve$,
since the actual construction of $a_Q$ in the proof of Theorem \ref{t-atomd}
implies that $t_Qa_Q$ is obtained canonical. Moreover, Lemma \ref{l-trace2} below shows that
the summation in \eqref{trace-d} converges in $\cs'(\rr^{n-1})$. Thus, the trace
operator is well defined on $\bbeve$.

To state our main result of this section, we adopt the following notation.
For $p,\ q,\ s$ and $\phi$ as in Definition
\ref{def-b}, let, for all $\widetilde{x}\in\rr^{n-1}$,
$$\widetilde p(\widetilde{x}):=p(\widetilde{x},0),\quad\widetilde q(\widetilde{x}):=q(\widetilde{x},0),
\quad\widetilde s(\widetilde{x}):=s(\widetilde{x},0)$$
and, for all cubes $\widetilde{Q}$ of $\rr^{n-1}$, $\widetilde\phi(\widetilde{Q}):=\phi(\widetilde{Q}\times[0,\ell(\widetilde{Q}))$.
In what follows, let $\rr_+^n:=\rr^{n-1}\times[0,\fz)$,
$\rr_-^n:=\rr^{n-1}\times(-\fz,0]$ and $k:=(k_1,\dots,k_n)\in\zz^n$.
Denote by $C_c^\fz(\rr)$ the \emph{set} of all continuous functions $f$ on $\rr$
with compact support satisfying that all classical derivatives of $f$
are also continuous.
\begin{theorem}\label{t-trace}
Let $n\ge2$, $\phi\in\cg(\urn)$, $p,\ q\in C^{\log}(\rn)$ and
$s\in  C_{\rm loc}^{\log}(\rn)\cap L^\fz(\rn)$ satisfy
\begin{equation}\label{condition}
s_--\frac1{p_-}-(n-1)\lf(\frac1{\min\{1,p_-\}}-1\r)>0.
\end{equation}
Then
$$\mathop\mathrm{Tr}\bbeve=B_{\widetilde p(\cdot),\widetilde q(\cdot)}^{\widetilde s(\cdot)-
\frac1{\widetilde p(\cdot)},\widetilde\phi}(\rr^{n-1}).$$
\end{theorem}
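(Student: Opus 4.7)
The plan is to establish the two inclusions of Theorem \ref{t-trace} via the smooth atomic characterization (Theorem \ref{t-atomd}), after a preliminary reduction step, announced as Corollary \ref{c-trace} and Lemma \ref{l-trace4}, showing that the trace space on the right-hand side is insensitive to the $n$-th coordinate of $p(\cdot)$, $q(\cdot)$ and $s(\cdot)$. This reduction, combining the Sobolev-type embedding (Theorem \ref{t-sobolev}), the doubling/compatibility properties \eqref{phi-1}--\eqref{phi-2} of $\phi$, and the $\vz$-transform characterization (Theorem \ref{t-transform}), allows me to assume $p(\widetilde x,x_n)=\widetilde p(\widetilde x)$, $q(\widetilde x,x_n)=\widetilde q(\widetilde x)$ and $s(\widetilde x,x_n)=\widetilde s(\widetilde x)$ at least on a neighbourhood of $\rr^{n-1}\times\{0\}$. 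In parallel, well-definedness of $\mathop\mathrm{Tr}$ (Lemma \ref{l-trace2}) is obtained from the pairing bound $|\la\mathop\mathrm{Tr} f,h\ra|\ls\|h\|_{\cs_M(\rr^{n-1})}\|\{t_Q\}\|_{\beve}$ for $h\in\cs(\rr^{n-1})$, in the style of Proposition \ref{p-sbs}, followed by Theorem \ref{t-atomd}(ii).

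For the inclusion $\mathop\mathrm{Tr}\bbeve\hookrightarrow B_{\widetilde p(\cdot),\widetilde q(\cdot)}^{\widetilde s(\cdot)-1/\widetilde p(\cdot),\widetilde\phi}(\rr^{n-1})$, take $f\in\bbeve$ and decompose $f=\sum_{Q\in\cq^\ast}t_Qa_Q$ via Theorem \ref{t-atomd}(ii). Writing $Q=Q_{jk}$ with $k=(\widetilde k,k_n)$, the support condition (A1) forces $a_{Q_{jk}}(\widetilde x,0)\equiv0$ unless $|k_n|\le C_n$; for the surviving cubes set $\widetilde Q:=Q_{j\widetilde k}\subset\rr^{n-1}$, $\widetilde a_{\widetilde Q}:=2^{-j/2}\sum_{|k_n|\le C_n}a_{Q_{j(\widetilde k,k_n)}}(\cdot,0)$ and $\widetilde t_{\widetilde Q}:=2^{j/2}\sum_{|k_n|\le C_n}t_{Q_{j(\widetilde k,k_n)}}$. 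A direct inspection of (A3) shows that each $\widetilde a_{\widetilde Q}$ is a $(K,L)$-smooth atom of the target space; the factor $2^{-j/2}$ is exactly what converts the $n$-dimensional normalisation into the $(n-1)$-dimensional one. Applying Lemma \ref{l-cube-1} in the $n$-th variable then transforms an $L^{p(\cdot)}(Q)$-norm of a one-slab sum into an $L^{\widetilde p(\cdot)}(\widetilde Q)$-norm with the expected gain $|Q|^{1/[n\widetilde p(\cdot)]}$, matching the smoothness shift $\widetilde s(\cdot)-1/\widetilde p(\cdot)$. Summing in $j$ against $\widetilde\phi$ and invoking Theorem \ref{t-atomd}(i) on $\rr^{n-1}$ yields $\|\mathop\mathrm{Tr} f\|_{B_{\widetilde p(\cdot),\widetilde q(\cdot)}^{\widetilde s(\cdot)-1/\widetilde p(\cdot),\widetilde\phi}(\rr^{n-1})}\ls\|f\|_{\bbeve}$.

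For the reverse inclusion, I would construct an explicit right inverse. Given $g=\sum_{\widetilde Q}\widetilde t_{\widetilde Q}\widetilde a_{\widetilde Q}$ from Theorem \ref{t-atomd}(ii) applied on $\rr^{n-1}$, fix $\psi\in C_c^\fz(\rr)$ with $\mathop{\rm supp}\psi\subset(-1,1)$, $\psi(0)=1$ and $\int_\rr x_n^\gamma\psi(x_n)\,dx_n=0$ for $0\le\gamma<L$, where $L$ is as in Theorem \ref{t-atomd}(i); condition \eqref{condition} ensures that such an $L$ is admissible. For $\widetilde Q=Q_{j\widetilde k}$ put $Q:=Q_{j(\widetilde k,0)}$, $t_Q:=2^{-j/2}\widetilde t_{\widetilde Q}$, $a_Q(\widetilde x,x_n):=2^{j/2}\widetilde a_{\widetilde Q}(\widetilde x)\psi(2^jx_n)$, and set $t_Q:=0$ otherwise. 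The tensor-product structure preserves (A1)--(A3); setting $f:=\sum_Qt_Qa_Q$ and reversing the computation of the previous paragraph yields $f\in\bbeve$ with $\mathop\mathrm{Tr}(f)=g$ and the desired norm control.

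The hard part will be the coordinate-reduction Lemma \ref{l-trace4}, since the definition of $\|\cdot\|_{\bbeve}$ involves a semimodular that is genuinely sensitive to $p,q,s$ in every variable; absorbing this dependence cleanly requires careful bookkeeping with the log-H\"older constants and the compatibility \eqref{phi-2} of $\phi$, and is where the ideas behind Lemma \ref{l-equi} have to be reused in the spirit of Remark \ref{r-key}. A secondary delicate point is that the reindexing $(Q_{jk})_{k}\mapsto(Q_{j\widetilde k})_{\widetilde k}$ must commute with the suprema over dyadic cubes defining the quasi-norms, which forces one to pass through the extended lattice $\mathcal D_0(\rn)$ from Remark \ref{r-lattice} rather than working with $\cq^\ast$ directly.
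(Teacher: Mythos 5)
Your overall architecture coincides with the paper's: atomic decomposition in both directions via Theorem \ref{t-atomd}, a coordinate-reduction step (Corollary \ref{c-trace}, Lemma \ref{l-trace4}) so that $p,q,s$ may be assumed independent of $x_n$ near the hyperplane, and a tensor-product extension $a_{\widetilde Q}\otimes\eta_{\widetilde Q}$ for surjectivity. However, there are two concrete gaps in the forward inclusion. First, your reindexing is algebraically wrong as written: with $\widetilde a_{\widetilde Q}:=2^{-j/2}\sum_{|k_n|\le C_n}a_{Q_{j(\widetilde k,k_n)}}(\cdot,0)$ and $\widetilde t_{\widetilde Q}:=2^{j/2}\sum_{|k_n|\le C_n}t_{Q_{j(\widetilde k,k_n)}}$, the product $\widetilde t_{\widetilde Q}\,\widetilde a_{\widetilde Q}$ is a product of two sums and does not equal $\sum_{|k_n|\le C_n}t_{Q_{j(\widetilde k,k_n)}}a_{Q_{j(\widetilde k,k_n)}}(\cdot,0)$, so $\sum_{\widetilde Q}\widetilde t_{\widetilde Q}\widetilde a_{\widetilde Q}\ne\mathop\mathrm{Tr}(f)$. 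The fix is to keep the finitely many slabs separate, i.e.\ to write $\mathop\mathrm{Tr}(f)=\sum_{i=0}^{2}\sum_{\widetilde Q}\lambda_{(\wh{\widetilde Q})_i}b_{(\wh{\widetilde Q})_i}$ with $b_{(\wh{\widetilde Q})_i}:=[\ell(\widetilde Q)]^{1/2}a_{(\wh{\widetilde Q})_i}(\cdot,0)$ and $\lambda_{(\wh{\widetilde Q})_i}:=[\ell(\widetilde Q)]^{-1/2}t_{(\wh{\widetilde Q})_i}$, treating each $i$ as its own atomic decomposition and summing the three estimates at the end; this is exactly what the paper does in \eqref{trace-1}.

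Second, you verify only the smoothness condition (A3) for the restricted atoms and conclude they are $(K,L)$-smooth atoms of the target space. But restriction to $\{x_n=0\}$ destroys the vanishing moments (A2): $\int_{\rn}x^\gamma a_Q(x)\,dx=0$ says nothing about $\int_{\rr^{n-1}}\widetilde x^{\widetilde\gamma}a_Q(\widetilde x,0)\,d\widetilde x$. The forward inclusion therefore only works because the hypothesis \eqref{condition} forces the admissible range \eqref{atomd1} for the target space $B_{\widetilde p(\cdot),\widetilde q(\cdot)}^{\widetilde s(\cdot)-1/\widetilde p(\cdot),\widetilde\phi}(\rr^{n-1})$ to contain some $L\le0$, for which (A2) is vacuous; you invoke \eqref{condition} only in the extension direction, where in fact it plays the same role (the paper's $\eta$ carries no vanishing moments either). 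Beyond these two points, your identification of Lemma \ref{l-trace4} as the technical core is right, although its proof in the paper is more elementary than you anticipate: once Corollary \ref{c-trace} lets one assume $p,q,s$ independent of $x_n$ for $|x_n|\le2$, the equivalence \eqref{6.x1} follows from a Fubini-type computation on the semimodular (integrating out $x_n$ over $I^i_{\widetilde Q}$ produces the factor $\ell(\widetilde Q)^{1/\widetilde p(\cdot)}$ accounting for the smoothness shift), together with \eqref{phi-1}; no rerun of the Lemma \ref{l-equi} machinery is needed there.
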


\begin{remark}
(i) Using quarkonial characterizations of both $B_{p(\cdot),q(\cdot)}^{s(\cdot)}(\rn)$
and $B_{\widetilde p(\cdot),\widetilde q(\cdot)}^{\widetilde s(\cdot)-
\frac1{\widetilde p(\cdot)}}(\rr^{n-1})$, Noi \cite[Theorem 5.1]{noi14} proved the
following conclusion: $\mathop\mathrm{Tr}B_{p(\cdot),q(\cdot)}^{s(\cdot)}(\rn)
=B_{\widetilde p(\cdot),\widetilde q(\cdot)}^{\widetilde s(\cdot)-
\frac1{\widetilde p(\cdot)}}(\rr^{n-1})$ under a weaker condition that
$$\mathop{\rm ess\,inf}\limits_{x\in \rn}
\lf\{s(x)-\frac1{p(x)}-(n-1)\lf(\frac1{\min\{1,p(x)\}}-1\r)\r\}>0,$$
but $s\in C^{\log}(\rn)$ is required, which is stronger than the corresponding
one in Theorem \ref{t-trace}.

(ii) When $p_+\in(0,\fz)$ and $q(\cdot)\equiv q\in(0,\fz)$ is a constant, the
conclusion of Theorem \ref{t-trace} was proved
by Moura et al. \cite[Theorem 3.4]{mns13} under the condition \eqref{condition}.

(iii) When $p,\ q,\ s$ and $\phi$ are as in Remark \ref{r-defi}(ii),
Theorem \ref{t-trace} coincides with \cite[Theorem 6.8]{ysiy}.
\end{remark}
The following conclusion implies that the
summation in \eqref{trace-d} converges in $\cs'(\rr^{n-1})$,
whose proof is similar to that of \cite[Lemma 4.3]{yyz14}, the details being omitted.
\begin{lemma}\label{l-trace2}
Let $n$, $p$, $q$, $s$ and $\phi$ be as in Theorem \ref{t-trace}.
Then, for all $f\in\bbeve$, $\mathop\mathrm{Tr}(f)\in\cs'(\rn)$.
\end{lemma}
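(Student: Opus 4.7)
The plan is to reduce everything to the atomic decomposition from Theorem \ref{t-atomd} and then estimate the action of $\mathrm{Tr}(f)$ against Schwartz functions on $\rr^{n-1}$. First, given $f\in\bbeve$, write $f=\sum_{Q\in\cq^\ast}t_Q a_Q$ in $\cs'(\rn)$ with $\{a_Q\}_{Q\in\cq^\ast}$ a family of $(K,L)$-smooth atoms and $\|t\|_{\beve}\ls\|f\|_{\bbeve}$, where $K$ and $L$ are chosen as large as needed (in particular much larger than the minimal admissible values in Theorem \ref{t-atomd}). Then the goal is to show, for each $h\in\cs(\rr^{n-1})$, that
\[
\sum_{Q\in\cq^\ast}|t_Q|\left|\int_{\rr^{n-1}}a_Q(\widetilde x,0)h(\widetilde x)\,d\widetilde x\right|\ls \|h\|_{\cs_M(\rr^{n-1})}\|f\|_{\bbeve}
\]
for some sufficiently large $M\in\nn$; once this is established, the right-hand side of \eqref{trace-d} converges absolutely in $\cs'(\rr^{n-1})$ and the map $f\mapsto\mathrm{Tr}(f)$ is continuous from $\bbeve$ to $\cs'(\rr^{n-1})$.

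The main ingredients for the pointwise estimates are standard. For $Q=Q_{jk}$ with $j\in\zz_+$ and $k=(\widetilde k,k_n)\in\zz^{n-1}\times\zz$, the support condition (A1) forces $a_{Q_{jk}}(\cdot,0)\equiv 0$ unless $k_n\in\{-1,0,1\}$, so the outer sum reduces to a sum over $j\in\zz_+$ and $\widetilde k\in\zz^{n-1}$ (with $k_n$ running over at most three values). The size condition (A3) with $|\alpha|=0$ gives $|a_Q(\widetilde x,0)|\le 2^{jn/2}$, while $a_Q(\cdot,0)$ is supported in $\widetilde x\in 2^{-j}([-1,2)^{n-1}+\widetilde k)$, a set of $(n-1)$-measure comparable to $2^{-j(n-1)}$ on which $1+|\widetilde x|\gs 2^{-j}(1+|\widetilde k|)$ whenever $j\ge0$. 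Combining these with the Schwartz decay $|h(\widetilde x)|\le \|h\|_{\cs_M(\rr^{n-1})}(1+|\widetilde x|)^{-(n-1+M)}$ produces
\[
\left|\int_{\rr^{n-1}}a_{Q_{jk}}(\widetilde x,0)h(\widetilde x)\,d\widetilde x\right|
\ls \|h\|_{\cs_M(\rr^{n-1})}\,2^{-j(n/2-1)}(1+|\widetilde k|)^{-(n-1+M)}.
\]
For $|t_{Q_{jk}}|$, I would use the same pointwise bound already established inside the proof of Lemma \ref{l-welld}, namely
\[
|t_Q|\ls \|t\|_{\beve}\,\frac{\phi(Q)}{\|\chi_Q\|_{L^{p(\cdot)}(Q)}}\,|Q|^{s_-/n+1/2},
\]
and then invoke Lemma \ref{l-esti-cube} and Lemma \ref{l-cube-1} to control $\phi(Q_{jk})$ and $\|\chi_{Q_{jk}}\|_{L^{p(\cdot)}(\rn)}^{-1}$ by explicit powers of $2^j$ and $(1+|k|)$.

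Inserting these bounds and summing, the $\widetilde k$-sum converges because $M$ can be taken arbitrarily large (beating any polynomial growth in $(1+|\widetilde k|)$ coming from Lemmas \ref{l-esti-cube} and \ref{l-cube-1}), and the $j$-sum reduces to $\sum_{j\ge 0}2^{-j\vartheta}$ for some exponent $\vartheta$ that is positive exactly under the hypothesis \eqref{condition}; this is the step where the quantitative assumption $s_--1/p_--(n-1)(1/\min\{1,p_-\}-1)>0$ really enters, since it is precisely what is needed when $p_-<1$ to absorb the $2^{j(n/p_--(n-1))}$-type factor produced by Lemma \ref{l-cube-1} together with the atomic size. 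The main obstacle I expect is bookkeeping: one must track the powers of $2^j$ coming from $|a_Q|\le 2^{jn/2}$, the support factor $2^{-j(n-1)}$, $|Q|^{s_-/n+1/2}=2^{-j(s_-+n/2)}$, the $\phi$-bound $2^{j\log_2 c_1}$ and the Lebesgue-norm bound $2^{jn/p_-}$, and then verify that their sum is strictly negative under \eqref{condition}. The remaining work is routine, and the proof essentially mirrors that of \cite[Lemma 4.3]{yyz14}, so the details can be omitted in the final write-up.
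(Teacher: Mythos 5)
Your skeleton (atomic decomposition, pairing $\mathop\mathrm{Tr}(f)$ against $h\in\cs(\rr^{n-1})$, reduction to $k_n\in\{-1,0,1\}$, then summing over $j$ and $\widetilde k$) is reasonable, but the quantitative core fails, and it fails exactly in the regime the hypothesis \eqref{condition} is meant to cover. First, your displayed estimate for $|\int_{\rr^{n-1}}a_{Q_{jk}}(\widetilde x,0)h(\widetilde x)\,d\widetilde x|$ is false as written: from $1+|\widetilde x|\gs 2^{-j}(1+|\widetilde k|)$ on the support you get the decay factor $(1+2^{-j}|\widetilde k|)^{-(n-1+M)}$, not $(1+|\widetilde k|)^{-(n-1+M)}$ (you have silently dropped a factor $2^{j(n-1+M)}$; already for $1\le|\widetilde k|\le 2^j$ the cube lies in the unit ball, where $h$ need not be small, and your bound is off by $(1+|\widetilde k|)^{n-1+M}$). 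With the corrected factor, $\sum_{\widetilde k\in\zz^{n-1}}(1+2^{-j}|\widetilde k|)^{-(n-1+M)}\sim 2^{j(n-1)}$: about $2^{j(n-1)}$ cubes of level $j$ contribute comparably, and this exactly cancels the support factor $2^{-j(n-1)}$ in your bookkeeping. Tallying what remains (atom size $2^{jn/2}$; the factor $2^{-j(s_-+n/2)}2^{j\log_2c_1}2^{jn/p_-}$ from the coefficient bound of Lemma \ref{l-welld} combined with Lemmas \ref{l-esti-cube} and \ref{l-cube-1}; and the polynomial corrections in $(1+|k|)$, which the $\widetilde k$-sum converts into further positive powers of $2^j$), the $j$-series converges only under roughly $s_->n/p_-+3\log_2c_1+n(\frac1{p_-}-\frac1{p_+})$, which is far stronger than \eqref{condition}: for constant $p=2$ and $\phi\equiv1$ you would need $s>n/2$ instead of $s>1/2$. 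So your claim that the resulting exponent $\vartheta$ is positive "exactly under \eqref{condition}" is not correct, and the argument does not prove the lemma as stated.

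The missing idea is that the sup-type bound $|t_Q|\ls\|t\|_{\beve}\phi(Q)\|\chi_Q\|_{L^{p(\cdot)}(Q)}^{-1}|Q|^{s_-/n+1/2}$ is hopelessly lossy once summed over the $\sim2^{j(n-1)}$ level-$j$ cubes meeting a fixed compact set: no admissible sequence attains it simultaneously on all of them. One must instead keep the whole level-$j$ coefficient function $\sum_{\widetilde k}|t_{Q_{jk}}|\wz\chi_{Q_{jk}}$ and pair it with the Schwartz weight via H\"older's inequality in $L^{p(\cdot)}$ (after raising to the power $\min\{1,p_-\}$ when $p_-<1$, which is the source of the term $(n-1)(\frac1{\min\{1,p_-\}}-1)$ in \eqref{condition}), together with a slicing step in the $x_n$-direction to pass from the $n$-dimensional norm $\|t\|_{\beve}$ to integrals over $\rr^{n-1}$ (the source of the single $\frac1{p_-}$ rather than $\frac n{p_-}$). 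That is what the proof of \cite[Lemma 4.3]{yyz14}, which the paper invokes, actually does; your sketch claims to mirror it but in fact implements a cruder argument that yields only a strictly weaker result.
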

By Lemma \ref{l-conv-ineq} and an argument similar to that used in the proof of
\cite[Proposition 4.6]{yyz14}, we obtain Lemma \ref{l-trace1} below, the
details being omitted. The corresponding result in the case that
 $\phi\equiv1$ was obtained in \cite[Lemma 5.3]{noi14}.
\begin{lemma}\label{l-trace1}
Let $p,\ q\in C^{\log}(\rn)$, $s\in C_{\loc}^{\log}(\rn)\cap L^\fz(\rn)$
and $\phi\in\cg(\urn)$.
Let $\delta\in(0,\fz)$ and $\{E_Q\}_{Q\in\cq^\ast}$
be a collection of sets such that, for all
$Q\in \cq^\ast$, $E_{Q}\subset 4Q$ and $|E_{Q}|\ge\delta|Q|$.
Then, for all $t:=\{t_{Q}\}_{Q\in\cq^\ast}\subset\cc$,
$t\in\beve$ if and only if
$$\|t\|_{\wz \beve}
:=\sup_{P\in\cq}\frac1{\phi(P)}\lf\|\lf\{\sum_{\gfz{Q\in\cq^\ast}{\ell(Q)=2^{-j}}}
2^{j[s(\cdot)+\frac n2]}|t_Q|\chi_{E_Q}\r\}_{j\ge(j_P\vee0)}
\r\|_{\ell^{q(\cdot)}(L^{p(\cdot)}(P))}<\fz.
 $$
\end{lemma}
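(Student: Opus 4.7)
The plan is to establish the norm equivalence $\|t\|_{\wz\beve} \sim \|t\|_{\beve}$ (which gives the ``iff'' in the statement) via a two-sided comparison, exploiting that the two expressions differ only in replacing $\chi_Q$ by $\chi_{E_Q}$, where $E_Q \subset 4Q$ and $|E_Q| \geq \delta |Q|$. Throughout, write $G_j(x) := \sum_{\ell(Q)=2^{-j}} 2^{j[s(x)+n/2]} |t_Q| \chi_Q(x)$ and $F_j(x) := \sum_{\ell(Q)=2^{-j}} 2^{j[s(x)+n/2]} |t_Q| \chi_{E_Q}(x)$, so that $\|t\|_{\beve}$ and $\|t\|_{\wz\beve}$ are $\sup_P [\phi(P)]^{-1}$ times the $\ell^{q(\cdot)}(L^{p(\cdot)}(P))$-norms of $\{G_j\}$ and of $\{F_j\}$, respectively.

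For $\|t\|_{\wz\beve} \lesssim \|t\|_{\beve}$, I would invoke the discrete Peetre-maximal sequence $t^\ast_{r,\lambda}$ from Lemma \ref{l-estimate1}. At any fixed scale $j$ and point $x$, only a bounded (depending on $n$) number of cubes $Q$ with $\ell(Q)=2^{-j}$ satisfy $x \in E_Q \subset 4Q$, and each such $Q$ is a dyadic neighbor of the unique cube $Q_{j,x} \ni x$; hence $|t_Q| \lesssim (t^\ast_{r,\lambda})_{Q_{j,x}}$ for any admissible $r$ and $\lambda$, by the defining sum of $t^\ast_{r,\lambda}$. Summing over these neighbors gives the pointwise estimate $F_j(x) \lesssim \sum_{\ell(Q')=2^{-j}} 2^{j[s(x)+n/2]} (t^\ast_{r,\lambda})_{Q'} \chi_{Q'}(x)$, and therefore $\|t\|_{\wz\beve} \lesssim \|t^\ast_{r,\lambda}\|_{\beve}$. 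Choosing $r \in (0,\min\{p_-,q_-\})$ and $\lambda$ as in Lemma \ref{l-estimate1} then closes this direction.

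For the converse $\|t\|_{\beve} \lesssim \|t\|_{\wz\beve}$, I would prove the pointwise bound $G_j(x) \lesssim \eta_{j,m} \ast F_j(x)$ for $m$ sufficiently large. For $x \in Q_0$ with $\ell(Q_0)=2^{-j}$, the $Q_0$-contribution to $F_j$ is supported in $E_{Q_0} \subset 4Q_0 \subset B(x, 5\sqrt{n}\,2^{-j})$, so $\eta_{j,m}(x-y) \gtrsim 2^{jn}$ on $E_{Q_0}$; combining this with the log-H\"older continuity of $s$ (which yields $2^{js(y)} \sim 2^{js(x)}$ there) and with $|E_{Q_0}| \geq \delta|Q_0|$ produces $\eta_{j,m} \ast F_j(x) \gtrsim \delta G_j(x)$. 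Lemma \ref{l-conv-ineq}, used via Remark \ref{re-norm} when $\min\{p_-,q_-\} < 1$, then converts this into $\|\{G_j\}\|_{\ell^{q(\cdot)}(L^{p(\cdot)}(\rn))} \lesssim \|\{F_j\}\|_{\ell^{q(\cdot)}(L^{p(\cdot)}(\rn))}$. To recover a $P$-localized inequality suitable for the $\sup_P$ defining $\beve$, I would split $F_j = F_j \chi_{C^\ast P} + \sum_{k\ge 1} F_j \chi_{A_k}$ with $A_k := 2^{k+1} C^\ast P \setminus 2^k C^\ast P$: cover $C^\ast P$ by $O_n(1)$ dyadic cubes of side $\sim \ell(P)$ and use \eqref{phi-1} for the local piece, and use $(1+2^j|x-y|)^{-m} \lesssim 2^{-m(k+j-j_P)}$ on $A_k$ (for $x \in P$, $j \ge j_P$) to extract geometric decay in $k$ that overcomes the $\phi(2^k P) \lesssim 2^{k\log_2 c_1}\phi(P)$ growth from \eqref{phi-1} once $m$ is chosen large.

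I expect the main obstacle to lie in the tail bookkeeping of the converse direction: one must insert the $\eta_{j,m}$-decay on the annuli $A_k$ into the mixed $\ell^{q(\cdot)}(L^{p(\cdot)})$ norm, apply Lemma \ref{l-conv-ineq} annulus by annulus (through the modular/norm correspondence of Lemma \ref{l-inf-norm} when the variable exponents are small), and then sum a geometric series whose ratio depends jointly on $m$, $c_1$, $q_-$, $q_+$, and the growth constants in \eqref{phi-1}-\eqref{phi-2}. The pointwise control and the convolution inequality are essentially soft; the delicate point is the interplay between the variable exponents $p(\cdot)$, $q(\cdot)$, the two-parameter decay of $\eta_{j,m}$, and the geometric growth of $\phi$.
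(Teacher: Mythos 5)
Your argument is correct, and it follows the route the paper itself indicates (the proof is omitted there and attributed to Lemma \ref{l-conv-ineq} plus the argument of \cite[Proposition 4.6]{yyz14}): the harder direction $\|t\|_{\beve}\ls\|t\|_{\wz\beve}$ is exactly the intended one, namely the pointwise minorization $\eta_{j,m}\ast F_j\gs\delta\,G_j$ coming from $|E_Q|\ge\delta|Q|$, $E_Q\subset 4Q$ and the log-H\"older continuity of $s$, followed by the near/annulus decomposition in which the $(1+2^j|x-y|)^{-m}$ decay beats the $\phi$-growth from \eqref{phi-1} and the $q_-$/$q_+$ fluctuation via Lemma \ref{l-inf-norm} — the same bookkeeping already carried out for ${\rm I}_{P,2}$ in Lemma \ref{l-estimate1} and for ${\rm H}_{P,2}$ in Lemma \ref{l-equi}. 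Where you deviate is the easy direction: instead of running the symmetric convolution estimate $\chi_{E_Q}\ls\eta_{j,m}\ast\chi_Q$ (which would again require the annular localization to recover the $\sup_P$), you dominate $F_j$ pointwise by the corresponding sum built from $t^\ast_{r,\lz}$, using that only the $O_n(1)$ dyadic neighbours of $Q_{j,x}$ can have $x\in E_Q\subset 4Q$, and then invoke Lemma \ref{l-estimate1}. This is a genuine (if small) simplification: since Lemma \ref{l-estimate1} is already a $\phi$-localized statement, all the $P$-versus-$4Q$ spillover is absorbed there and no second annular decomposition is needed for that direction. Both approaches ultimately rest on Lemma \ref{l-conv-ineq}, so nothing is gained in generality, but your version makes one of the two directions essentially soft.
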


In what follows, for all $j\in\zz$ and $\widetilde{k}\in\zz^{n-1}$,
let $\widetilde Q_{j\widetilde{k}}:=2^{-j}([0,1)^{n-1}+\widetilde{k})$
be the dyadic cube of $\rr^{n-1}$,
$\widetilde{\cq}$ the set of all dyadic cubes of $\rr^{n-1}$ and
$\widetilde{\cq}^\ast:=\{\widetilde{Q}\in\widetilde{\cq}:\ \ell(\widetilde{Q})\le1\}$.
For all $\widetilde{Q}\in\widetilde{\cq}^\ast$ and $i\in\zz$, let
$\wz \chi_{\widetilde{Q}}:=|\widetilde{Q}|^{-1/2}\chi_{\widetilde{Q}}$, $I_{\widetilde{Q}}^i:=[(i-1)\ell(\widetilde{Q}),i\ell(\widetilde{Q}))$ and
$(\wh {\widetilde{Q}})_i:=\widetilde{Q}\times I_{\widetilde{Q}}^i$. For all $P\in\cq$,
denote by $P_{\rr^{n-1}}^\bot$ the \emph{vertical projection} of $P$ on $\rr^{n-1}$, namely,
$$P_{\rr^{n-1}}^\bot:=\{\widetilde{x}\in \rr^{n-1}:\ \exists\ x_n\in\rr\ {\rm s.\,t.}\
(\widetilde{x},x_n)\in P\}$$
and, for all $j\in\zz_+$, let
$P_{\rr^{n-1}}^{\bot,j}
:=\{\widetilde{Q}\in\widetilde{\cq}^\ast:\ \widetilde{Q}\subset P_{\rr^{n-1}}^\bot,\ \ell(\widetilde{Q})=2^{-j}\}$.

Applying Lemma \ref{l-trace1}, we conclude that the following conclusion holds true,
which, in the case that $\phi\equiv1$, was proved in \cite[Lemma 5.4]{noi14}.
\begin{lemma}\label{l-trace3}
Let $p_1,\ p_2,\ q_1,\ q_2\in C^{\log}(\rn)$,
$s_1,\ s_2\in C_{\loc}^{\log}(\rn)\cap L^\fz(\rn)$
and $\phi\in\cg(\urn)$.
Assume that $p_1=p_2$, $q_1=q_2$ and $s_1=s_2$ on $\rr_-^n$ or $\rr_+^n$. Then,
for all $\{t_{Q}\}_{Q\in\cq^\ast}\subset\cc$ and $i\in\{0,1,2\}$,
\begin{equation*}
\lf\|\lf\{t_{(\wh {\widetilde{Q}})_i}\r\}_{\widetilde{Q}\in\widetilde{\cq}^\ast}
\r\|_{b_{p_1(\cdot),q_1(\cdot)}^{s_1(\cdot),\phi}(\rn)}
\sim\lf\|\lf\{t_{(\wh{\widetilde{Q}})_i}\r\}_{\widetilde{Q}\in\widetilde{\cq}^\ast}
\r\|_{b_{p_2(\cdot),q_2(\cdot)}^{s_2(\cdot),\phi}(\rn)},
\end{equation*}
where the implicit positive constants are independent of $\{t_{Q}\}_{Q\in\cq^\ast}$.
\end{lemma}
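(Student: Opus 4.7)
The plan is to apply Lemma \ref{l-trace1} to both quasi-norms in the claim with a single carefully chosen family $\{E_Q\}_{Q\in\cq^\ast}$ that localizes all relevant computations inside the half-space on which the two sets of variable exponents agree, and then observe that the two semimodulars arising from $(p_k,q_k,s_k)$, $k\in\{1,2\}$, then produce identical values. Without loss of generality assume $p_1=p_2$, $q_1=q_2$ and $s_1=s_2$ on $\rr_+^n$ (the other half-space case is symmetric). For each $Q\in\cq^\ast$ of the form $Q=(\wh{\widetilde Q})_i$ with $i\in\{0,1,2\}$, I set $E_Q:=\widetilde Q\times[0,\ell(\widetilde Q))$, and for all other $Q\in\cq^\ast$ I set $E_Q:=Q$. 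A direct geometric check shows $E_Q\subset 4Q$ and $|E_Q|=|Q|$ in every case; in particular, for $i=0$ this uses that the vertical range of $4(\wh{\widetilde Q})_0$ is $[-5\ell(\widetilde Q)/2,3\ell(\widetilde Q)/2)\supset[0,\ell(\widetilde Q))$. By construction, $E_Q\subset\rr_+^n$ whenever $Q=(\wh{\widetilde Q})_i$.

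Let $\{u_Q\}_{Q\in\cq^\ast}$ be the natural extension of the sequence $\{t_{(\wh{\widetilde Q})_i}\}_{\widetilde Q\in\widetilde{\cq}^\ast}$ defined by $u_Q:=t_{(\wh{\widetilde Q})_i}$ if $Q=(\wh{\widetilde Q})_i$ for some $\widetilde Q\in\widetilde{\cq}^\ast$, and $u_Q:=0$ otherwise. Applying Lemma \ref{l-trace1} for $k\in\{1,2\}$ with the family $\{E_Q\}$ just fixed, each quasi-norm $\|\{u_Q\}\|_{b_{p_k(\cdot),q_k(\cdot)}^{s_k(\cdot),\phi}(\rn)}$ is equivalent to
$$\sup_{P\in\cq}\frac{1}{\phi(P)}\lf\|\lf\{G_j^k\r\}_{j\ge(j_P\vee 0)}\r\|_{\ell^{q_k(\cdot)}(L^{p_k(\cdot)}(P))},$$
where
$$G_j^k:=\sum_{\widetilde Q\in\widetilde{\cq}^\ast,\,\ell(\widetilde Q)=2^{-j}}2^{j[s_k(\cdot)+n/2]}|t_{(\wh{\widetilde Q})_i}|\chi_{E_{(\wh{\widetilde Q})_i}}.$$
Since every $E_{(\wh{\widetilde Q})_i}$ is contained in $\rr_+^n$ and $s_1=s_2$ on $\rr_+^n$, we have $G_j^1\equiv G_j^2$ as measurable functions on $\rn$.

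The proof is then completed by the identity of the two resulting mixed-norms. Indeed, for any function $G$ supported in $\rr_+^n$ and any $P\in\cq$, one has $\varrho_{p_k(\cdot)}(G\chi_P/\lambda)=\int_{P\cap\rr_+^n}\rho_{p_k(x)}(|G(x)|/\lambda)\,dx$, which depends only on $p_k|_{\rr_+^n}$; hence $\|G\chi_P\|_{L^{p_1(\cdot)}(\rn)}=\|G\chi_P\|_{L^{p_2(\cdot)}(\rn)}$. An analogous argument applied to \eqref{mixed-x} shows that the infimum over $\mu_j$ of $\varrho_{p_k(\cdot)}(G_j\chi_P/\mu_j^{1/q_k(\cdot)})$ is the same for $k=1$ and $k=2$, because the exponent $1/q_k(\cdot)$ is only sampled at points where $G_j\ne0$, i.e. in $\rr_+^n$, where $q_1\equiv q_2$. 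Taking the supremum over $P\in\cq$ yields the desired equivalence.

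The only non-routine point is the geometric verification that a single choice of $E_{(\wh{\widetilde Q})_i}$ can be made to lie simultaneously in $\rr_+^n$ and inside $4(\wh{\widetilde Q})_i$ for all three values of $i\in\{0,1,2\}$; once this is in place, everything else reduces to the elementary principle that a Lebesgue-integral-based semimodular is determined by the exponent's values on the support of the function being measured.
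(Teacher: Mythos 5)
Your proof is correct and follows essentially the same route as the paper: both apply Lemma \ref{l-trace1} with a family $\{E_Q\}$ chosen inside $4Q$, of measure comparable to $|Q|$, and lying in the half-space where the exponents agree, after which the equality of the semimodulars is immediate. The only difference is the explicit choice of $E_{(\wh{\widetilde{Q}})_i}$ (you take $\widetilde{Q}\times[0,\ell(\widetilde{Q}))$, the paper takes $\widetilde{Q}\times[\tfrac{i+1}{2}\ell(\widetilde{Q}),\tfrac{3(i+1)}{4}\ell(\widetilde{Q}))$), and both choices satisfy the required containment and measure conditions.
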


\begin{proof}
By similarity, we only consider the case that
$p_1=p_2$, $q_1=q_2$ and $s_1=s_2$ on $\rr_+^n$.
For all $\widetilde{Q}\in\widetilde{\cq}^\ast$ and $i\in\{0,1,2\}$, let
$$E_{(\wh{\widetilde{Q}})_i}
:=\lf\{(\widetilde{x},x_n)\in\rn:\ \widetilde{x}\in \widetilde{Q},\
\frac{i+1}2\ell(\widetilde Q)\le x_n
<\frac{3(i+1)}4\ell(\widetilde Q)\r\}.$$
Then $E_{(\wh{\widetilde{Q}})_i}\subset\rr^n_+$, $E_{(\wh{\widetilde{Q}})_i}\subset 4(\wh{\widetilde{Q}})_i$ and
$|E_{(\wh{\widetilde{Q}})_i}|\ge \frac4{i+1}|(\wh{\widetilde{Q}})_i|$.
 By this and Lemma \ref{l-trace1}, we conclude that
\begin{eqnarray*}
&&\lf\|\lf\{t_{(\wh{\widetilde{Q}})_i}\r\}_{\widetilde{Q}\in\widetilde{\cq}^\ast}
\r\|_{b_{p_1(\cdot),q_1(\cdot)}^{s_1(\cdot),\phi}(\rn)}\\
&&\hs\sim\sup_{P\in\cq}\frac 1{\phi(P)}\lf\|\lf\{
2^{js_1(\cdot)}\sum_{\widetilde{Q}\in P_{\rr^{n-1}}^{\bot,j}}
\lf|t_{(\wh{\widetilde{Q}})_i}\r|\lf|(\wh{\widetilde{Q}})_i\r|^{-\frac12}\chi_{E_{(\wh{\widetilde{Q}})_i}}
\r\}_{j\ge(j_P\vee0)}\r\|_{\ell^{q_1(\cdot)}(L^{p_1(\cdot)}(P))}\\
&&\hs\sim\sup_{P\in\cq}\frac 1{\phi(P)}\lf\|\lf\{
2^{js_2(\cdot)}\sum_{\widetilde{Q}\in P_{\rr^{n-1}}^{\bot,j}}
\lf|t_{(\wh{\widetilde{Q}})_i}\r|\lf|(\wh{\widetilde{Q}})_i\r|^{-\frac12}\chi_{E_{(\wh{\widetilde{Q}})_i}}
\r\}_{j\ge(j_P\vee0)}\r\|_{\ell^{q_2(\cdot)}(L^{p_2(\cdot)}(P))}\\
&&\hs\sim\lf\|\lf\{t_{(\wh{\widetilde{Q}})_i}\r\}_{\widetilde{Q}\in\widetilde{\cq}^\ast}
\r\|_{b_{p_2(\cdot),q_2(\cdot)}^{s_2(\cdot),\phi}(\rn)},
\end{eqnarray*}
which completes the proof of Lemma \ref{l-trace3}.
\end{proof}

Adopting an argument similar to that used in the proof \cite[Proposition 7.3]{dhr09},
we obtain the following conclusion, the details being omitted.
\begin{corollary}\label{c-trace}
Let $p_1,\ p_2,\ q_1,\ q_2\in C^{\log}(\rn)$,
$s_1,\ s_2\in C_{\loc}^{\log}(\rn)\cap L^\fz(\rn)$
and $\phi\in\cg(\urn)$.
Assume that $p_1=p_2$, $q_1=q_2$ and $s_1=s_2$ on $\rr^{n-1}\times\{0\}$. Then,
for all $t:=\{t_{(\wh{\widetilde{Q}})_i}\}_{\widetilde{Q}\in\widetilde{\cq}^\ast}\subset\cc$
 and $i\in\{0,1,2\}$,
\begin{equation*}
\lf\|\lf\{t_{(\wh{\widetilde{Q}})_i}\r\}_{\widetilde{Q}\in\widetilde{\cq}^\ast}
\r\|_{b_{p_1(\cdot),q_1(\cdot)}^{s_1(\cdot),\phi}(\rn)}
\sim\lf\|\lf\{t_{(\wh{\widetilde{Q}})_i}\r\}_{\widetilde{Q}\in\widetilde{\cq}^\ast}
\r\|_{b_{p_2(\cdot),q_2(\cdot)}^{s_2(\cdot),\phi}(\rn)},
\end{equation*}
where the implicit positive constants are independent of $t$.
\end{corollary}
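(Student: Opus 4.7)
The strategy follows the reflection-plus-gluing technique from the proof of \cite[Proposition 7.3]{dhr09}, reducing Corollary \ref{c-trace} to a chain of applications of Lemma \ref{l-trace3}. The aim is to construct intermediate exponent triples connecting $(p_1,q_1,s_1)$ and $(p_2,q_2,s_2)$, each consecutive pair coinciding on a full half-space so that Lemma \ref{l-trace3} applies.

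First, I would reflect across the hyperplane: set $\tilde p(\widetilde x,x_n):=p_1(\widetilde x,-|x_n|)$ and define $\tilde q,\tilde s$ analogously from $q_1,s_1$. One checks that $\tilde p,\tilde q\in C^{\log}(\rn)$ with $\tilde p_\fz=p_{1,\fz}$ and $\tilde q_\fz=q_{1,\fz}$, and that $\tilde s\in C^{\log}_{\loc}(\rn)\cap L^\fz(\rn)$, since the map $x_n\mapsto-|x_n|$ is $1$-Lipschitz and hence preserves both \eqref{ve1} and \eqref{ve2}. By construction $\tilde p=p_1$, $\tilde q=q_1$, $\tilde s=s_1$ on $\rr^n_-$ and $\tilde p(\widetilde x,0)=p_2(\widetilde x,0)$ on the hyperplane. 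Analogously, I would define $\hat p,\hat q,\hat s$ by reflecting $p_2,q_2,s_2$. Applying Lemma \ref{l-trace3} to each pair $(p_j,q_j,s_j)$ versus its reflected version (which agree on $\rr^n_-$) yields the equivalences
\[
\lf\|\lf\{t_{(\wh{\widetilde Q})_i}\r\}\r\|_{b_{p_1(\cdot),q_1(\cdot)}^{s_1(\cdot),\phi}(\rn)}\sim\lf\|\lf\{t_{(\wh{\widetilde Q})_i}\r\}\r\|_{b_{\tilde p(\cdot),\tilde q(\cdot)}^{\tilde s(\cdot),\phi}(\rn)}
\]
and analogously with $(p_2,q_2,s_2)$ and $(\hat p,\hat q,\hat s)$ in place of $(p_1,q_1,s_1)$ and $(\tilde p,\tilde q,\tilde s)$.

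Second, I would glue the two reflected triples along the hyperplane to obtain a common intermediate. Set $P:=\hat p$ on $\rr^n_-$ and $P:=\tilde p$ on $\rr^n_+$, and define $Q,S$ analogously. Continuity across the hyperplane is immediate from $\tilde p(\widetilde x,0)=\hat p(\widetilde x,0)$, and local log-Hölder continuity is preserved by the standard argument of inserting the foot of the perpendicular onto $\rr^{n-1}\times\{0\}$ between two points lying on opposite sides. By construction $P$ coincides with $\tilde p$ on $\rr^n_+$ and with $\hat p$ on $\rr^n_-$, so two further applications of Lemma \ref{l-trace3} produce
\[
\lf\|\lf\{t_{(\wh{\widetilde Q})_i}\r\}\r\|_{b_{\tilde p(\cdot),\tilde q(\cdot)}^{\tilde s(\cdot),\phi}(\rn)}\sim\lf\|\lf\{t_{(\wh{\widetilde Q})_i}\r\}\r\|_{b_{P(\cdot),Q(\cdot)}^{S(\cdot),\phi}(\rn)}\sim\lf\|\lf\{t_{(\wh{\widetilde Q})_i}\r\}\r\|_{b_{\hat p(\cdot),\hat q(\cdot)}^{\hat s(\cdot),\phi}(\rn)},
\]
which, together with the first step, gives the claimed equivalence.

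The main obstacle is verifying that the glued triple $(P,Q,S)$ satisfies the global log-Hölder condition \eqref{ve2}: the limit of $P$ at infinity along $\rr^n_-$ equals $p_{2,\fz}$, while along $\rr^n_+$ it equals $p_{1,\fz}$, and these need not agree. To circumvent this, I plan to exploit the locality of the sequence norm, noting that the nonzero entries of $\{t_{(\wh{\widetilde Q})_i}\}_{\widetilde Q\in\widetilde{\cq}^\ast}$ live on cubes lying within the scale-proportional slab $\{|x_n|\le 2\ell(\widetilde Q)\}$; modifying $P,Q,S$ far from the hyperplane alters the relevant modulars in \eqref{mixed-x} only by a controlled error that vanishes along a suitable exhaustion, after which one may replace $P$ by a genuine $C^{\log}(\rn)$ exponent. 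Making this cut-off procedure uniform with respect to the outer supremum over $P_0\in\cq$ in the definition of $\|\cdot\|_{\beve}$ is the most delicate step of the argument.
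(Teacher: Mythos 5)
Your overall strategy---connecting $(p_1,q_1,s_1)$ to $(p_2,q_2,s_2)$ through intermediate exponents that agree with each endpoint on a full half-space, and invoking Lemma \ref{l-trace3} along the chain---is precisely the argument the paper intends (it is the scheme of \cite[Proposition 7.3]{dhr09}, which the paper cites and whose details it omits). The reflection step and the gluing step are both sound, and the insertion-of-the-foot-of-the-perpendicular argument for local log-H\"older continuity of the glued exponent is the right verification. (As a minor simplification, the double reflection is a detour: one may glue $p_1$ on $\rr^{n-1}\times(-\fz,0]$ directly to $p_2$ on $\rr^{n-1}\times(0,\fz)$ and apply Lemma \ref{l-trace3} twice.)

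The genuine problem with your write-up is the last paragraph: the ``main obstacle'' you identify does not exist, and the cut-off/exhaustion machinery you propose to circumvent it---which you concede you have not made uniform in the outer supremum, so the proof as written is incomplete---is unnecessary. Since $n\ge2$, the hyperplane $\rr^{n-1}\times\{0\}$ is unbounded; because $p_1,p_2\in C^{\log}(\rn)$ satisfy \eqref{ve2}, Remark \ref{re-conv}(ii) gives $p_{j,\fz}=\lim_{|x|\to\fz}p_j(x)$, and evaluating this limit along the hyperplane, where $p_1(\widetilde x,0)=p_2(\widetilde x,0)$, yields $p_{1,\fz}=p_{2,\fz}$ (and likewise $q_{1,\fz}=q_{2,\fz}$). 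Hence the glued exponents $P$ and $Q$ satisfy \eqref{ve2} with the common constant at infinity, and $S$ needs no decay condition at all, since $s_1,s_2$ are only assumed to lie in $C^{\log}_{\loc}(\rn)\cap L^\fz(\rn)$ and the glued function is clearly bounded. With this one-line observation the glued triple lies in the required classes, Lemma \ref{l-trace3} applies directly, and your argument closes; delete the final paragraph and replace it by this remark.
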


For the notation simplicity, let $\widetilde \beta(\widetilde{x}):=\widetilde s(\widetilde{x})-\frac1{\widetilde p(\widetilde{x})}$ for all
$\widetilde{x}\in\rr^{n-1}$.
\begin{lemma}\label{l-trace4}
Let $p,\ q\in C^{\log}(\rn)$, $s\in C_{\loc}^{\log}(\rn)$
and $\phi\in\cg(\urn)$.
Then there exists a positive constant $C$ such that,
for all $t:=\{t_{Q}\}_{Q\in\cq^\ast}\subset\cc$ and $i\in\{0,1,2\}$,
\begin{eqnarray}\label{6.x1}
\lf\|\lf\{t_{(\wh{\widetilde{Q}})_i}\r\}_{\widetilde{Q}\in\widetilde{\cq}^\ast}
\r\|_{b_{\widetilde p(\cdot),\widetilde q(\cdot)}^{\widetilde s(\cdot)-\frac1{\widetilde p(\cdot)},
\widetilde\phi}(\rr^{n-1})}
\sim\lf\|\lf\{t_{(\wh{\widetilde{Q}})_i}[\ell(\widetilde{Q})]^{\frac12}\r\}_{\widetilde{Q}\in\widetilde{\cq}^\ast}\r\|_{\beve},
\end{eqnarray}
where the implicit positive constants are independent of $t$.
\end{lemma}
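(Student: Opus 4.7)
The plan is to reduce both sides of \eqref{6.x1} to the same $(n-1)$-dimensional expression via two successive reductions. First, let $p^\sharp,q^\sharp,s^\sharp$ on $\rn$ be the trivial vertical lifts $p^\sharp(\widetilde x,x_n):=\widetilde p(\widetilde x)$, $q^\sharp(\widetilde x,x_n):=\widetilde q(\widetilde x)$, $s^\sharp(\widetilde x,x_n):=\widetilde s(\widetilde x)$ — these inherit the required log-H\"older regularity and agree with $p,q,s$ on $\rr^{n-1}\times\{0\}$ — so Corollary \ref{c-trace} lets us replace $p,q,s$ on the right-hand side of \eqref{6.x1} by $p^\sharp,q^\sharp,s^\sharp$. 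Second, a direct check shows $(\wh{\widetilde Q})_1\subset 4(\wh{\widetilde Q})_i$ and $|(\wh{\widetilde Q})_1|=|(\wh{\widetilde Q})_i|$ for each $i\in\{0,1,2\}$, so Lemma \ref{l-trace1} applied with the admissible choice $E_{(\wh{\widetilde Q})_i}:=(\wh{\widetilde Q})_1$ uniformly rewrites the modified right-hand side with $\chi_{(\wh{\widetilde Q})_i}$ replaced by $\chi_{(\wh{\widetilde Q})_1}=\chi_{\widetilde Q}(\widetilde x)\chi_{[0,\ell(\widetilde Q))}(x_n)$, merging the three cases $i=0,1,2$ into one common geometry.

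Next, I would exploit the resulting product structure. Every dyadic cube $P\in\cq$ factors as $P=\widetilde P\times J$ with $\widetilde P\in\widetilde\cq$ and $J$ a dyadic interval of the same length. Elementary dyadic arithmetic shows that $[0,2^{-j})\cap J=\emptyset$ for all $j\ge(j_P\vee 0)$ unless $J\ni 0$, which forces $J=[0,\ell(P))$, hence $P=\widetilde P\times[0,\ell(\widetilde P))$ and $\phi(P)=\widetilde\phi(\widetilde P)$; thus the supremum over $P\in\cq$ collapses to one over $\widetilde P\in\widetilde\cq$. Because $p^\sharp,q^\sharp$ depend only on $\widetilde x$, the modular identity
\begin{equation*}
\int_\rn\lf|g(\widetilde x)\chi_{[0,2^{-j})}(x_n)\r|^{\widetilde p(\widetilde x)}\,dx
=\int_{\rr^{n-1}}\lf|2^{-j/\widetilde p(\widetilde x)}g(\widetilde x)\r|^{\widetilde p(\widetilde x)}\,d\widetilde x
\end{equation*}
propagates through the definitions of $L^{p(\cdot)}$ and $\ell^{q(\cdot)}(L^{p(\cdot)})$ to yield
\begin{equation*}
\lf\|\lf\{g_j(\widetilde\cdot)\chi_{[0,2^{-j})}(\cdot)\r\}_j\r\|_{\ell^{q^\sharp(\cdot)}(L^{p^\sharp(\cdot)}(\rn))}
=\lf\|\lf\{2^{-j/\widetilde p(\cdot)}g_j\r\}_j\r\|_{\ell^{\widetilde q(\cdot)}(L^{\widetilde p(\cdot)}(\rr^{n-1}))}.
\end{equation*}
Applying this identity with $g_j(\widetilde x):=\sum_{\ell(\widetilde Q)=2^{-j}}2^{j[\widetilde s(\widetilde x)+n/2]}[\ell(\widetilde Q)]^{1/2}|t_{(\wh{\widetilde Q})_i}|\chi_{\widetilde Q\cap\widetilde P}(\widetilde x)$ and combining constants, the factor $2^{-j/\widetilde p(\cdot)}$ merges with $2^{j\widetilde s(\cdot)}$ to produce $2^{j\widetilde\beta(\cdot)}$ with $\widetilde\beta(\cdot)=\widetilde s(\cdot)-1/\widetilde p(\cdot)$, exactly reproducing the summand $|\widetilde Q|^{-\widetilde\beta(\cdot)/(n-1)}|t_{(\wh{\widetilde Q})_i}|\wz\chi_{\widetilde Q}$ in the $(n-1)$-dimensional $b$-norm on the left-hand side of \eqref{6.x1}.

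The main technical obstacle is the uniform treatment of $i\in\{0,1,2\}$: unlike the $i=1$ case, the cubes $(\wh{\widetilde Q})_0$ and $(\wh{\widetilde Q})_2$ do not nest cleanly under dyadic refinement, so a direct product-factorization argument applies only for $i=1$. This is bypassed by the uniform set choice $E_Q:=(\wh{\widetilde Q})_1$ inside Lemma \ref{l-trace1}, which transports all three cases to the same geometry before the mixed-norm computation begins. A secondary subtlety is that oversized dyadic cubes $P$ with $\ell(P)>1$ must be shown not to produce spurious extra contributions on the right-hand side; this is controlled by the same dyadic argument forcing $J=[0,\ell(P))$ and making $\widetilde\phi$-weights match automatically.
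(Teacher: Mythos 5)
Your overall strategy is the same as the paper's: reduce via Corollary \ref{c-trace} to exponents that do not depend on $x_n$ near the relevant slab, and then convert the $n$-dimensional modular into the $(n-1)$-dimensional one by integrating out the vertical variable, so that the factor $2^{-j}=|I_{\widetilde Q}^i|$ becomes $2^{-j/\widetilde p(\cdot)}$ and shifts the smoothness from $\widetilde s$ to $\widetilde s-1/\widetilde p$. Your packaging differs in two pleasant ways: you first invoke Lemma \ref{l-trace1} with $E_{(\wh{\widetilde Q})_i}:=(\wh{\widetilde Q})_1$ to put all three cases $i\in\{0,1,2\}$ into one geometry and then obtain an exact identity (both inequalities at once) by collapsing the supremum over $P\in\cq$ to $P=\widetilde P\times[0,\ell(\widetilde P))$, whereas the paper proves one inequality directly (integrating over $[(i-1)2^{-j},i2^{-j})$, enlarging to $4\wh{\widetilde P}$ and invoking Remark \ref{r-lattice}) and dismisses the converse ``by similarity''. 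Your dyadic argument that only vertical factors $J\ni 0$ contribute, and the bookkeeping of the normalizations $\wz\chi_Q$ versus $\wz\chi_{\widetilde Q}$, are correct.

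There is, however, one step that fails as written: the claim that the trivial vertical lifts $p^\sharp(\widetilde x,x_n):=\widetilde p(\widetilde x)$, etc., ``inherit the required log-H\"older regularity''. Local log-H\"older continuity does pass to the lift, but the decay condition \eqref{ve2} does not: fixing $\widetilde x$ and letting $x_n\to\fz$ forces $p^\sharp(\widetilde x,x_n)=p(\widetilde x,0)$ to equal $p^\sharp_\fz$, so $p^\sharp\in C^{\log}(\rn)$ only if $\widetilde p$ is constant. Hence Corollary \ref{c-trace} (whose proof ultimately rests on the convolution inequality, which needs the global condition) cannot be applied to $p^\sharp,q^\sharp$ as stated. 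The repair is exactly what the paper does: only assume the exponents are independent of $x_n$ for $|x_n|\le2$ (equivalently, modify them on that slab and leave them untouched outside, which preserves $C^{\log}(\rn)$ since $\log(e+|x|)\sim\log(e+|\widetilde x|)$ there); this suffices because every function appearing in either norm is supported in $\rr^{n-1}\times[-2,2]$, so only the values of the exponents on that slab enter. With this correction your argument goes through.
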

\begin{proof}
By similarity, we only give the proof of ``$\ls$" in \eqref{6.x1}.
By Corollary \ref{c-trace}, we may assume that $p$, $q$ and $s$ are independent
of the $n$-th coordinate $x_n$ with $|x_n|\le2$.
For all $\widetilde{P}\in\widetilde{\cq}$, $j\in\zz_+$, $\widetilde{x}\in\rr^{n-1}$ and $x\in\rn$, let
$\wh {\widetilde{P}}:=\widetilde{P}\times[0,\ell(\widetilde{P}))$,
$$\Gamma_{\widetilde{P}}^j:=\lf\{Q\in\widetilde{\cq}^\ast:\ \widetilde{Q}\subset \widetilde{P},\ \ell(\widetilde{Q})=2^{-j}\r\},\qquad H_{\widetilde{P}}^j(\widetilde{x}):=\sum_{\widetilde{Q}\in\Gamma_{\widetilde{P}}^j}
\lf|t_{(\wh{\widetilde{Q}})_i}\r|\wz\chi_{\widetilde{Q}}(\widetilde{x})$$
and
$$G_{{\widetilde{P}}}^j(x):=\sum_{\widetilde{Q}\in\Gamma_{\widetilde{P}}^j}
\lf|t_{(\wh{\widetilde{Q}})_i}\r|[\ell(\widetilde{Q})]^{\frac12}
\wz\chi_{(\wh{\widetilde{Q}})_i}(x).$$

Let $\widetilde{P}\in\widetilde{\cq}$ be a given dyadic cube.
Then, by \eqref{phi-1}, we find that,
for all $j\in\zz_+\cap[(j_{\widetilde{P}}\vee0),\fz)$ and $\lz,\ \mu\in(0,\fz)$,
\begin{eqnarray*}
&&\int_{\widetilde{P}}\lf[\frac1{\mu}\lf\{[\lz\widetilde\phi(\widetilde{P})]^{-1}2^{j\widetilde\beta(\widetilde{x})}
H_{\widetilde{P}}^j(\widetilde{x})\r\}^{\widetilde q(\widetilde{x})}
\r]^{\frac{\widetilde p(\widetilde{x})}{\widetilde q(\widetilde{x})}}\,d\widetilde{x}\\
&&\hs=\int_{\widetilde{P}}\lf[\int_{(i-1)2^{-j}}^{i2^{-j}}
\frac{2^{j\widetilde s(\widetilde{x})\widetilde p(\widetilde{x})}}{\mu^{\widetilde p(\widetilde{x})/\widetilde q(\widetilde{x})}}
\lf\{[\lz\widetilde\phi(\widetilde{P})]^{-1}H_{\widetilde{P}}^j(\widetilde{x})
\r\}^{\widetilde p(\widetilde{x})}\chi_{I_{\widetilde{Q}}^i}(x_n)\,dx_n\r]\,d\widetilde{x}\\
&&\hs=\int_{\widetilde{P}}\int_{(i-1)2^{-j}}^{i2^{-j}}
\frac{2^{j s(\widetilde{x},x_n) p(\widetilde{x},x_n)}}{\mu^{p(\widetilde{x},x_n)/q(\widetilde{x},x_n)}}
\lf\{[\lz\widetilde\phi(\widetilde{P})]^{-1}H_{\widetilde{P}}^j(\widetilde{x})\chi_{I_{\widetilde{Q}}^i}(x_n)
\r\}^{ p(\widetilde{x},x_n)}\,dx_nd\widetilde{x}\\
&&\hs\ls\int_{4\wh {\widetilde{P}}}
\lf[\frac1{\mu}\lf\{\lf[\lz\phi(4\wh{\widetilde{P}})\r]^{-1}2^{js(x)}
G_{{\widetilde{P}}}^j(x)\r\}^{q(x)}
\r]^{\frac{p(x)}{q(x)}}\,dx,
\end{eqnarray*}
which implies that
\begin{eqnarray*}
&&\lf\|\lf\{[\lz\phi(\widetilde{P})]^{-1}2^{j\widetilde \beta(\cdot)}
H_{\widetilde{P}}^j\r\}^{\widetilde q(\cdot)}
\r\|_{L^{\frac{\widetilde p(\cdot)}{\widetilde q(\cdot)}}(\widetilde{P})}
\ls\lf\|\lf\{\lf[\lz\phi(4\wh{\widetilde{P}})\r]^{-1}2^{js(\cdot)}
G_{\widetilde{P}}^j\r\}^{q(\cdot)}
\r\|_{L^{\frac{p(\cdot)}{q(\cdot)}}(4\wh {\widetilde{P}})}.
\end{eqnarray*}
From this and Remark \ref{r-lattice}, we deduce that
\begin{eqnarray*}
&&\frac1{\widetilde\phi(\widetilde{P})}\lf\|\lf\{2^{j\widetilde\beta(\cdot)}H_{\widetilde{P}}^j\r\}_{j\ge(j_{\widetilde{P}}\vee0)}
\r\|_{\ell^{\widetilde q(\cdot)}(L^{\widetilde p(\cdot)}(\widetilde{P}))}\\
&&\hs=\inf\lf\{\lz\in(0,\fz):\ \sum_{j=(j_{\widetilde{P}}\vee0)}^\fz
\lf\|\lf\{[\lz\widetilde\phi(\widetilde{P})]^{-1}2^{j\widetilde \beta(\cdot)}
H_{\widetilde{P}}^j\r\}^{\widetilde q(\cdot)}
\r\|_{L^{\frac{\widetilde p(\cdot)}{\widetilde q(\cdot)}}(\widetilde{P})}\le1\r\}\\
&&\hs\ls\frac1{\phi(4\wh{\widetilde{P}})}\lf\|\lf\{2^{js(\cdot)}G_{\widetilde{P}}^j\r\}_{j\ge(j_{\widetilde{P}}\vee0)}
\r\|_{\ell^{q(\cdot)}(L^{p(\cdot)}(4\wh {\widetilde{P}}))}
\ls\lf\|\lf\{t_{(\wh{\widetilde{Q}})_i}[\ell(\widetilde{Q})]^{-\frac12}
\r\}_{\widetilde{Q}\in\widetilde{\cq}^\ast}\r\|_{\beve},
\end{eqnarray*}
which, combined with the arbitrariness of $\widetilde{P}$, further implies that
\begin{eqnarray*}
\lf\|\lf\{t_{(\wh{\widetilde{Q}})_i}\r\}_{\widetilde{Q}\in\widetilde{\cq}^\ast}
\r\|_{b_{\widetilde p(\cdot),\widetilde q(\cdot)}^{\widetilde s(\cdot)-\frac1{\widetilde p(\cdot)},
\widetilde\phi}(\rr^{n-1})}
\ls \lf\|\lf\{t_{(\wh{\widetilde{Q}})_i}[\ell(\widetilde{Q})]^{\frac12}\r\}_{\widetilde{Q}\in\widetilde{\cq}^\ast}\r\|_{\beve}.
\end{eqnarray*}
This finishes the proof of Lemma \ref{l-trace4}.
\end{proof}
\begin{proof}[Proof of Theorem \ref{t-trace}]
 Let $f\in\bbeve$. Then, by Theorem \ref{t-atomd}, we have an atomic decomposition
$f=\sum_{Q\in\cq^\ast}t_Qa_Q$ in $\cs'(\rn)$,
where $\{a_Q\}_{Q\in\cq^\ast}$ is a family of smooth atoms of
$B_{p(\cdot),p(\cdot)}^{s(\cdot),\phi}(\rn)$
 and
$t:=\{t_Q\}_{Q\in\cq^\ast}\subset \cc$ satisfies
\begin{equation}\label{trace-y}
\|t\|_{f_{p(\cdot),p(\cdot)}^{s(\cdot),\phi}(\rn)}
\ls\|f\|_{F_{p(\cdot),p(\cdot)}^{s(\cdot),\phi}(\rn)}.
\end{equation}
Since ${\rm supp}\,a_Q\subset 3Q$ for each $Q\in\cq^\ast$, it follows that,
if $i\notin\{0,1,2\}$, then, for each $\widetilde{Q}\in\widetilde{\cq}^\ast$,
 $a_{(\wh{\widetilde{Q}})_i}(\cdot,0)=0$,
which implies that $\mathop\mathrm{Tr}(f)$ can be rewritten as, for all $\widetilde x\in\rr^{n-1}$,
\begin{equation}\label{trace-1}
\mathop\mathrm{Tr}(f)(\widetilde x,0)=\sum_{i=0}^2\sum_{\widetilde{Q}\in\widetilde{\cq}^\ast}
t_{{(\wh{\widetilde{Q}})_i}}a_{(\wh {\widetilde{Q}})_i}(\widetilde x,0)
=:\sum_{i=0}^2\sum_{\widetilde{Q}\in\widetilde{\cq}^\ast}
\lz_{(\wh{\widetilde{Q}})_i}b_{(\wh{\widetilde{Q}})_i}(\widetilde x),
\end{equation}
where, for each $\widetilde{Q}\in \widetilde{\cq}^\ast$ and
$\widetilde x\in\rr^{n-1}$,
$b_{(\wh{\widetilde{Q}})_i}(\widetilde x)
:=[\ell(\widetilde{Q})]^\frac12a_{(\wh{\widetilde{Q}})_i}(\widetilde x,0)$ and
$\lz_{(\wh{\widetilde{Q}})_i}:=[\ell(\widetilde{Q})]^{-\frac12}t_{(\wh{\widetilde{Q}})_i}$.
Since $a_Q$ is a smooth atom supported near $Q$ of $\bbeve$,
by \eqref{condition}, we easily find that, for each $\widetilde{Q}\in \widetilde{\cq}^\ast$,
$b_{(\wh{\widetilde{Q}})_i}$ is also a smooth atom of
$B_{\widetilde p(\cdot),\widetilde q(\cdot)}^{\widetilde \beta(\cdot),\widetilde\phi}(\rr^{n-1})$ supported near $\widetilde{Q}$.
On the other hand, by Lemma \ref{l-trace4} and \eqref{trace-y}, we find that
\begin{eqnarray*}
\lf\|\lf\{\lz_{(\wh{\widetilde{Q}})_i}\r\}_{\widetilde{Q}\in\widetilde{\cq}^\ast}\r\|_
{b_{\widetilde p(\cdot),\widetilde q(\cdot)}^{\widetilde \beta(\cdot),\widetilde\phi}(\rr^{n-1})}
&&\ls\lf\|\lf\{\lz_{(\wh{\widetilde{Q}})_i}[\ell(\widetilde{Q})]^{\frac12}
\r\}_{\widetilde{Q}\in\widetilde{\cq}^\ast}\r\|_{\beve}\\
&&\sim\lf\|\lf\{t_{(\wh{\widetilde{Q}})_i}\r\}_{\widetilde{Q}\in\widetilde{\cq}^\ast}\r\|_{\beve}\ls\|f\|_{\bbeve}.
\end{eqnarray*}
Therefore, by Theorem \ref{t-atomd} and \eqref{trace-1}, we conclude that
\begin{eqnarray*}
\|\mathop\mathrm{Tr}(f)\|_{B_{\widetilde p(\cdot),\widetilde q(\cdot)}^{\widetilde \beta(\cdot),\widetilde\phi}(\rr^{n-1})}
\ls\sum_{i=0}^2\lf\|\lf\{\lz_{(\wh{\widetilde{Q}})_i}\r\}_{\widetilde{Q}\in\widetilde{\cq}^\ast}\r\|_
{b_{\widetilde p(\cdot),\widetilde q(\cdot)}^{\widetilde \beta(\cdot),\widetilde\phi}(\rr^{n-1})}
\ls\|f\|_{\bbeve}.
\end{eqnarray*}

Conversely, we prove that the operator Tr is surjective.
Let $f\in B_{\widetilde p(\cdot),\widetilde q(\cdot)}^{\widetilde \beta(\cdot),\widetilde\phi}(\rr^{n-1})$. Then,
by Theorem \ref{t-atomd}, we find that there exist a sequence
$\{\lz_{\widetilde{Q}}\}_{\widetilde{Q}\in \widetilde{\cq}^\ast}\subset\cc$ and
a family $\{a_{\widetilde{Q}}\}_{\widetilde{Q}\in \widetilde{\cq}^\ast}$ of smooth atoms
of $B_{\widetilde p(\cdot),\widetilde q(\cdot)}^
{\widetilde \beta(\cdot),\widetilde\phi}(\rr^{n-1})$ such that
$f=\sum_{\widetilde{Q}\in\widetilde{\cq}^\ast}\lz_{\widetilde{Q}}
a_{\widetilde{Q}}$ in $\cs'(\rr^{n-1})$ and
\begin{equation}\label{trace-2}
\|\{\lz_{\widetilde{Q}}\}_{\widetilde{Q}\in\widetilde{\cq}^\ast}
\|_{b_{\widetilde p(\cdot),\widetilde q(\cdot)}^{\widetilde \beta(\cdot),
\widetilde\phi}(\rr^{n-1})}
\ls \|f\|_{B_{\widetilde p(\cdot),\widetilde q(\cdot)}^{\widetilde
\beta(\cdot),\widetilde\phi}(\rr^{n-1})}.
\end{equation}

Similar to the proof of \cite[Theorem 6.8]{ysiy}, we choose a function
$\eta\in C_c^\fz(\rn)$ satisfying $\supp\eta\subset(-1/2,1/2)$ and $\eta(0)=1$.
For all $\widetilde{Q}\in\widetilde{\cq}^\ast$ and $\xi\in\rr$, let
$\eta_{\widetilde{Q}}(\xi):=\eta(2^{-\log_2\ell(\widetilde{Q})}\xi)$. Then
$\supp\eta_{\widetilde{Q}}\subset (-\ell(\widetilde{Q}),\ell(\widetilde{Q}))$. Let
\begin{equation}\label{trace-3}
g:=\sum_{\widetilde{Q}\in\widetilde{\cq}^\ast}\lz_{\widetilde{Q}}a_{\widetilde{Q}}\otimes\eta_{\widetilde{Q}}
=:\sum_{i=0}^1\sum_{\widetilde{Q}\in \widetilde{\cq}^\ast}t_{(\wh{\widetilde{Q}})_i}b_{(\wh{\widetilde{Q}})_i},
\end{equation}
where, for all $Q\in\cq^\ast$ and $(\widetilde{x},x_n)\in\rn$,
$$b_{Q}(\widetilde{x},x_n):=[\ell(\widetilde{Q})]^{-\frac12}a_{\widetilde{Q}}\otimes\eta_{\widetilde{Q}}(\widetilde{x},x_n)
=:[\ell(\widetilde{Q})]^{-\frac12}a_{\widetilde{Q}}(\widetilde{x})\eta_{\widetilde{Q}}(x_n),$$
$t_Q:=[\ell(\widetilde{Q})]^{1/2}\lz_{\widetilde{Q}}$ if $Q=(\wh {\widetilde{Q}})_i$ for some $i\in\{0,1\}$
and $t_Q:=0$ otherwise.
By the construction of $\{b_Q\}_{Q\in\cq^\ast}$, we easily find that, for each
$Q\in\cq^\ast$, $b_Q$ is a smooth atom supported near $Q$ of $\bbeve$.
On the other hand, by Lemma \ref{l-trace4} and \eqref{trace-2},
 we conclude that, for each $i\in\{0,1\}$,
\begin{eqnarray*}
\lf\|\lf\{t_{(\wh {\widetilde{Q}})_i}\r\}_{\widetilde{Q}\in \widetilde{\cq}^\ast}\r\|_{\beve}
&&\sim\lf\|\lf\{\lz_{\widetilde{Q}}[\ell(\widetilde{Q})]^{1/2}\r\}_{\widetilde{Q}\in \widetilde{\cq}^\ast}\r\|_{\beve}\\
&&\ls\lf\|\{\lz_{\widetilde{Q}}\}_{\widetilde{Q}\in\widetilde{\cq}^\ast}
\r\|_{b_{\widetilde p(\cdot),\widetilde q(\cdot)}^{\widetilde \beta(\cdot),\widetilde\phi}(\rr^{n-1})}
\ls \|f\|_{B_{\widetilde p(\cdot),\widetilde q(\cdot)}^{\widetilde \beta(\cdot),\widetilde\phi}(\rr^{n-1})},
\end{eqnarray*}
which, together with Theorem \ref{t-atomd}, implies that
the summation in \eqref{trace-3} converges in $\cs'(\rn)$, $g\in\bbeve$
and $\|g\|_{\bbeve}
\ls\|f\|_{B_{\widetilde p(\cdot),\widetilde q(\cdot)}^{\widetilde \beta(\cdot),
\widetilde\phi}(\rr^{n-1})}$;
furthermore, Tr$(g)=f$ in $\cs'(\rr^{n-1})$. Therefore, Tr is surjective.
This finishes the proof of Theorem \ref{t-trace}.
\end{proof}

\bigskip

\noindent  Dachun Yang, Ciqiang Zhuo (Corresponding author) and Wen Yuan

\medskip

\noindent  School of Mathematical Sciences, Beijing Normal University,
Laboratory of Mathematics and Complex Systems, Ministry of
Education, Beijing 100875, People's Republic of China

\smallskip

\noindent {\it E-mails}: \texttt{dcyang@bnu.edu.cn} (D. Yang)

\hspace{0.98cm}\texttt{cqzhuo@mail.bnu.edu.cn} (C. Zhuo)

\hspace{0.98cm}\texttt{wenyuan@bnu.edu.cn} (W. Yuan)

\end{document}